\documentclass{amsart}

\usepackage{comment}

\usepackage{dsfont}
\usepackage{amsmath}
\usepackage{amssymb}
\usepackage{graphicx}

\usepackage{amssymb}
\usepackage{comment}

\usepackage[colorlinks,linkcolor=blue,citecolor=blue,urlcolor=blue]{hyperref}

\usepackage{amsfonts}

\usepackage{soul}

\usepackage{mathpazo}
\usepackage{color}
\usepackage{yfonts}

\usepackage{paralist}

\usepackage{stmaryrd}

\usepackage{amsxtra}

\usepackage{mathtools}

\def\Id{{\rm Id}}

\def\clb{   \color{black}}

\let\cal\mathcal

\def \R{{\mathbb R}}

\def \Z{{\mathbb Z}}

\newcommand{\T}{{\mathbb T}}
\newcommand{\prf}{{\begin{proof}}}
\newcommand{\epf}{{\end{proof}}}

\newcommand{\Q}{{\mathbb Q}}

\newcommand{\ary}{\begin{eqnarray}}
\newcommand{\eary}{\end{eqnarray}}

\newcommand{\aryst}{\begin{eqnarray*}}
\newcommand{\earyst}{\end{eqnarray*}}

\newcommand{\enmt}{\begin{enumerate}}
\newcommand{\eenmt}{\end{enumerate}}

\newtheorem{prop}{\sc Proposition}

\newtheorem{lemma}{\sc lemma}

\newtheorem{cor}{\sc corollary}

\theoremstyle{definition}

\def\bee{\begin{equation}}
\def\eee{\end{equation}}

\newtheorem{defi}{\it Definition}

\theoremstyle{rema}




\newcommand{\pdvr}[2]
{\dfrac{\partial^{#2} #1}{\partial \theta^{#2_1} \partial r^{#2_2}}}
%

\newcommand{\pdvrs}[2]
{\partial^{#2} #1 /\partial \theta^{#2_1} \partial r^{#2_2}}
%

\newtheorem{thm}{\sc Theorem}




\newcommand\myeq{\stackrel{\mathclap{\normalfont\mbox{\small def}}}{=}}

\numberwithin{equation}{section}

\author{}
\begin{document}

\title[genericity of mode-locking]{On topological genericity of  the mode-locking phenomenon}

\author{Zhiyuan Zhang}
\address{Institut de Math\'{e}matique de Jussieu \& KTH Royal Institute of Technology}
\email{zzzhangzhiyuan@gmail.com}

\date{Oct. 23, 2017}

\maketitle

\renewcommand{\abstractname}{Abstract}

\begin{abstract}
 We study the circle homeomorphisms extensions over a strictly ergodic homeomorphism. Under a very mild restriction, we show that the fibered rotation number is locally constant on an open and dense subset. In the complement of this set, we found a dense subset in which every map is conjugate to a direct product. Our result provides a generalisation of Avila-Bochi-Damanik's result on ${\rm SL}(2,\mathbb{R})-$cocycles, and  J\"ager-Wang-Zhou's result on quasi-periodically forced maps, to a broader setting.
\end{abstract}


\section{Introduction}

The study of circle homeomorphisms is a classical subject in dynamical systems. 
For each homeomorphism $f: \T \to \T$ with a lift  $F : \R \to \R$, the limit $\rho(F) = \lim_{n \to \infty}(F^{n}(x) - x)/n$ exists and is independent of $x$. The \textit{rotation number} of $f$ is defined as $\rho(f) = \rho(F) \mod \ 1$. It is already known to Poincar\'e that the dynamic of $f$ is largely determined by $\rho(f)$. The function $f' \mapsto \rho(f')$ is locally constant at $f$, or in other terms $f$ is \textit{mode-locked}, if and only if there exists a non-empty open interval $I \subsetneqq \T$ such that $f^p(\overline{I}) \subset I$ for some $p \in \Z$. In particular, such behaviour occurs only when $\rho(f)$ is rational.

Inspired by a question of Herman in \cite{H}, Bjerkl\"ov and J\"ager  found in \cite{BJ} a precise analogy to the above one-dimensional result for \textit{quasi-periodically forced maps} on $\T^2$ defined as follows.
\begin{defi}\label{defi qpf on t2}
A quasi-periodically forced map  on $\T^2$ (qpf-map for short) is a homeomorphism $f: \T^2 \to \T^2$, homotopic to the identity, with  skew-product structure $f(\theta,x) = (\theta+ \omega, f_{\theta}(x))$,
where $\omega \in (\R \setminus \Q)/\Z$ is called the \textit{frequency}.
\end{defi}
Through out this paper, we  tacitly identify $\T$ with $\R/\Z$.
For a qpf-map $f$, we say that a continuous map $F : \T \times \R \to \T \times \R$ is a \textit{lift} of $f$ if $F(\theta,x) = (\theta+\omega,F_{\theta}(x))$ such that $F_{\theta}(x) \mod \Z = f_{\theta}(x \mod \Z)$ and $F_{\theta}(x+1) = F_{\theta}(x)+1$ for any $(\theta,x) \in \T \times \R$.
Similar to the case of circle homeomorphisms, the limit
$\rho(F) = \lim_{n \to \infty}((F^{n})_{\theta}(x) - x )/n$
exists and is independent of the choice of $(\theta,x)$. Here we  set $(F^n)_{\theta} = F_{\theta+(n-1)\omega}\cdots F_{\theta}$ for all integer $n \geq 1$. The \textit{fibered rotation number} of $f$ is defined as $\rho(f) := \rho(F) \mod \ 1$, and is independent of the choice of $F$.

For any qpf-map $f$ with a lift $F$, for any $t > 0$ we set $F_{t}(\theta,x) = (\theta+\omega, F_{\theta}(x) +t)$.
A qpf-map $f$  is said to be \textit{mode-locked} if $\varepsilon \mapsto \rho(F_{\varepsilon})$ is constant on a neighborhood of $\varepsilon = 0$. This is equivalent to say that $g \mapsto \rho(g)$ is locally constant at $g=f$. In \cite{BJ}, the authors showed that $f$ is mode-locked if and only if there exists a closed annulus, bounded by continuous curves, which is mapped into its own interior by some iterate of $f$. Moreover, they showed that whenever $\omega, \rho(f)$ and $1$ are rationally independent, the map $\varepsilon \mapsto \rho(F_{\varepsilon})$ is strictly monotonically increasing at $\varepsilon = 0$. In particular, in the latter case, $f$ is not mode-locked.

There is a closely related line of research focused on the structure of quasi-periodically forced maps on $\T^2$. In \cite{JS}, the authors showed that any qpf-map $f$ of bounded mean motion such that $\omega, \rho(f),1$ are rationally independent, is semi-conjugate to the irrational torus translation $(\theta,x) \mapsto (\theta+\omega, x + \rho(f))$ via a fibre-respecting semi-conjugacy (see \cite[Theorem 3.1]{JS} and \cite[Theorem 2.3]{BJ} for the formal statement). Such result is crucial for the proof of strictly monotonicity in \cite[Lemma 3.2]{BJ}. J\"ager \cite{J} later generalised  \cite[Theorem 3.1]{JS} to any  minimal totally irrational pseudo-rotations on $\T^2$ with bounded mean motion.  

Aside from the deterministic results mentioned above, it is also natural to study the generic picture of quasi-periodically forced maps. A natural question is:
\
\begin{center} \textit{ Is a generic quasi-periodically forced map on $\T^2$ mode-locked ? } \end{center}
Besides its intrinsic interest, the above question also has roots in the study of differentiable dynamical systems and Schr\"odinger operators. We will elaborate this point in Subsection \ref{More on quasi-periodically / dynamically forced maps}. 

Depending on one's interpretation of the notion of genericity, and also on the regularity of the maps, the answer to the above question may vary.
So far, this question has been studied by different means. In \cite{JW}, J\"ager and Wang used a multi-scale argument originated from the classical works of  Benedieck-Carleson \cite{BC} and Young \cite{Y} to construct a $C^1$ family of quasi-periodically forced \textit{ diffeomorphisms}, among which mode-locked parameter has small measure. In \cite{WZJ}, J\"ager, Wang and Zhou showed that for a topologically generic frequency $\omega$, the set of mode-locked qpf-maps with frequency $\omega$ is residual.  We leave to Subsection \ref{More on quasi-periodically / dynamically forced maps} for more on the genericity condition. For a recent result on the local density of mode-locking in a related setting, we mention \cite{KWYZ}.

In this paper, we prove the topological genericity of mode-locking property for a  much more general class of maps which we call \textit{dynamically forced map}. This class includes circle homeomorphism extensions over any strictly ergodic homeomorphism of a compact manifold. As a special case, we are able to prove the topological genericity of mode-locking for qpf-maps  with any given irrational frequency. Moreover, we show that among the set of qpf-maps which are not mode-locked, a dense subset corresponds to maps which are topologically linearizable. 

Let $X$ be a compact metric space, and let $g: X \to X$ be a uniquely ergodic  homeomorphism with $\mu$ as the unique $g-$invariant measure.
\begin{defi}[$g-$forced maps]\label{def qpf}
Let $g: X \to X$ be given as above.
We say that a homeomorphism $f : X \times \T \to X \times \T$ is a  \textit{ $g-$forced map} if it is of the form
\aryst
f(\theta,y) = (g(\theta), f_{\theta}(y))
\earyst
and admits a \textit{lift}, i.e., a homeomorphism $F: X \times \R \to X \times \R$ satisfying $F_{\theta}(x) \mod \Z = f_{\theta}(x \mod \Z)$  and $F_{\theta}(x+1) = F_{\theta}(x)+1$ for any $(\theta,x) \in X \times \R$. We denote the set of $g-$forced maps by $\cal F_g$. The space $\cal F_g$ is a complete metric space under the  $C^0$ distance $d_{C^0}$. 
\end{defi}

Similar to the case of quasi-periodically forced maps on $\T^2$, for any $g-$forced map $f$ with a lift $F$, for any integer $n \geq 1$, we use the notation
$$(F^n)_{\theta} \quad \myeq  \quad F_{g^{n-1}(\theta)}\cdots F_{\theta}.$$
We define $(f^n)_{\theta}$ in a similar way.
The limit $\rho(F) = \lim_{n \to \infty} ((F^{n})_{\theta}(y) - y)/n$
exists and is independent of the choice of $(\theta, y)$.
It is easy to see that a lift of $f$ is unique up to the addition of a function in $C^0(X,\Z)$ to the $\R-$coordinate. Set $D = \{\int \varphi d\mu \mid \varphi \in C^0(X,\Z)\}$.
Then  the \textit{fibered rotation number} of $f$, defined as $\rho(f) = \rho(F) \mod \ D $, is independent  of the lift $F$.
For any $t \in \R$, we set $f_{t}(\theta,y) = (g(\theta), f_{\theta}(y) + t)$ and $F_{t}(\theta,y) = (g(\theta), F_{\theta}(y) + t)$. It is clear that $F_t$ is a lift of $f_t$ for all $t \in \R$.

Based on the local behaviour of $\rho$, we can give a crude classification for $g-$forced maps as follows.
\begin{defi}\label{def mode lock}
Given a $g-$forced map $f$ with a lift $F$. Then $f$ is said to be
\enmt
\item[$\bullet$] \textit{mode-locked} if the function $\varepsilon \mapsto \rho(F_{\varepsilon})$ is constant on an open neighborhood of $0$;
\item[$\bullet$] \textit{semi-locked} if  it is not mode-locked and the function $\varepsilon \mapsto \rho(F_{\varepsilon})$ is constant on either $(-\varepsilon',0]$ or $[0, \varepsilon')$ for some $\varepsilon' > 0$;
\item[$\bullet$] \textit{unlocked} if the function $\varepsilon \mapsto \rho(F_{\varepsilon})$ is strictly increasing at $0$.
\eenmt
We denote the set of mode-locked (resp. semi-locked, unlocked) $g-$forced maps  by $\cal{ML}_g$ (resp. $\cal{SL}_g, \cal{UL}_g$). Often we omit the subscript  and write $\cal{ML}$, $\cal{SL}$ and $\cal{UL}$ instead. Every $g-$forced map belongs to exactly one of these three subsets.
\end{defi}

Our first result shows that: under a very mild condition on $g$,  the set of $g-$forced maps which are topologically conjugate to $g \times R_{\alpha}$, where $R_{\alpha}$ is the circle rotation $x \mapsto x + \alpha$, is dense in the complement of $\cal{ML}$.

In the following, we let $X$ be a compact metric space, and let $g : X \to X$ be a strictly ergodic (i.e. uniquely ergodic and minimal) homeomorphism with a non-periodic factor of finite dimension. That is, there is a homeomorphism $\bar{g} : Y \to Y$, where $Y$ is an infinite compact subset of some Euclidean space $\R^d$, and there is a onto continuous map $h : X \to Y$ such that $h  g = \bar{g}  h$.  In particular, $g$ can be any strictly ergodic homeomorphsm of a  compact  manifold of positive dimension.

\begin{thm}\label{thm a ternary version}
For $(X, g)$ as above, the following is true.
For any $g-$forced map $f$ that is not mode-locked, for any lift $F$ of $f$,  for any $\varepsilon > 0$, there exists a $g-$forced map $f'$ with $d_{C^0}(f',f) < \varepsilon$, and a homeomorphism $h: X \times \T \to   X \times \T$ of the form $h(\theta,x) = (\theta, h_{\theta}(x))$ with a lift $H: X \times \R \to X \times \R$ (that is, $H(\theta,x) \mod \Z = h_{\theta}(x \mod \Z)$ and $H(\theta, x+1) = H(\theta,x)+1$ for any $(\theta,x) \in X \times \R$), such that
\aryst
f' h(x,y) = h(g(x), y + \rho(F)), \quad \forall (x,y) \in X \times \T.
\earyst
\end{thm}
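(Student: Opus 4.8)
The plan is to construct $f'$ together with its conjugacy $h$ at the same time, which makes the conjugacy automatic and reduces everything to a $C^0$-approximation problem. Write $\alpha:=\rho(F)$. Asking that $f'h=h\,(g\times R_\alpha)$ with $h(\theta,x)=(\theta,h_\theta(x))$ is the same as asking for a continuous family $\theta\mapsto h_\theta$ of circle homeomorphisms (with compatible lifts $H_\theta$, $H_\theta(x+1)=H_\theta(x)+1$) such that
\[
f'_\theta \;=\; h_{g\theta}\circ R_\alpha\circ h_\theta^{-1}\qquad(\theta\in X),
\]
so the problem becomes: produce such a family with $d_{C^0}(f',f)<\varepsilon$, where $f'$ is defined fibrewise by the displayed formula. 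Unwinding the formula, this is the same as building $H$ as an \emph{approximate conjugacy} from $g\times R_\alpha$ to $F$, i.e.\ with $H_{g\theta}(z+\alpha)$ close to $F_\theta(H_\theta(z))$ for all $(\theta,z)$; note also that any genuine conjugate of $g\times R_\alpha$ automatically has uniformly bounded mean motion, so this is in any case the shape the answer must take.

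The backbone of the construction is a tall Kakutani--Rokhlin tower for $g$, furnished by the hypothesis on $(X,g)$. Using the non-periodic, finite-dimensional factor $\bar g:Y\to Y$ with $Y\subseteq\R^d$, I would take a base of the form $B=h^{-1}(B_Y)$ with $B_Y$ a small ball in $Y$: non-periodicity of the factor makes the return times as large as we please, and finite dimensionality of $Y$ lets $B_Y$, hence $B$, be chosen with small, low-dimensional boundary, which is what will make a later continuous interpolation possible. This produces finitely many columns of heights $\ge N$ (with $N$ arbitrarily large) whose union omits only a set $E\subseteq X$ of small $\mu$-measure. Over each column I would fix the initial data $H_\theta$ (for $\theta$ in the column's base) and then \emph{propagate it exactly}, $H_{g\theta}:=F_\theta\circ H_\theta\circ R_{-\alpha}$, so that the approximate-conjugacy relation holds with \emph{no} error throughout the interior of the column; consequently $f'=f$ there. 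On $E$ and in thin transition zones around column tops, bottoms and $\partial B$ one then interpolates, and $f'$ is defined by the displayed formula with the interpolated $h_\theta$.

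The heart of the proof, and the step I expect to be the main obstacle, is to make this quantitative, and it has two parts. First, one must ensure the propagated maps $H_{g^i\theta}=(F^i)_\theta\circ R_{-i\alpha}$ do not run off as $i$ grows along a column, and that each column can be \emph{closed up} with $C^0$-small error — i.e.\ $H$ at the top of a column is $C^0$-close, as a circle homeomorphism, to the initial data at the base that the orbit returns near, so that the interpolation over the transition zone only changes $f$ by a controlled amount. This is exactly where the failure of mode-locking enters: ``$f$ traps no annulus'' must be converted into the statement that along a long orbit segment the upper and lower approximate conjugacies to $R_\alpha$ remain uniformly $C^0$-close, possibly after, or combined with, an arbitrarily small preliminary perturbation removing the net drift (bounding the relevant mean motion) without moving $\rho$, or equivalently by a suitable choice of the column's initial data. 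This is the analogue, in the present topological setting, of the step in Avila--Bochi--Damanik and in J\"ager--Wang--Zhou that upgrades ``not mode-locked'' into a usable perturbation, and it is where the one-sided/two-sided dichotomy (semi-locked versus unlocked) is used. Second, one must keep the fibered rotation number \emph{exactly} at $\alpha$: the interpolation over $E$ introduces a correction cocycle whose $\mu$-average would shift $\rho$, and one must cancel this average by adding a coboundary of zero $\mu$-average and, if needed, by trading it against the freedom on one side coming from semi-lockedness. Granting these points, the $h_\theta$ and $f'$ are defined and uniformly continuous in $\theta$ on a dense set, hence extend to continuous families on all of $X\times\T$, yielding the homeomorphism $h$ with $f'h=h\,(g\times R_\alpha)$. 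The case where $\alpha$ is resonant with $g$, so that $g\times R_\alpha$ is not minimal, fits the same scheme but is softer, since there one only needs to straighten the appropriate power of $f'$ so that it carries a continuous family of invariant graphs.
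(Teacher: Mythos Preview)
Your outline has the right architecture and you correctly locate the main obstacle, but the step you call ``the heart of the proof'' is not worked out, and the scheme as stated has an internal tension. You propose to propagate $H_{g^i\theta}=(F^i)_\theta\, T_{-i\alpha}$ exactly along each column, so that $f'=f$ on the column interior, and to interpolate only in thin transition zones. But the mismatch at the return is governed by $\sup_y|(F^N)_\theta(y)-y-N\alpha|$, which by unique ergodicity is $o(N)$ yet in general unbounded. Absorbing an unbounded mismatch across a zone of bounded height while keeping $d_{C^0}(f',f)<\varepsilon$ is impossible; spreading it over a zone whose height grows with $N$ defeats your claim that $f'=f$ on the column interior. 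A choice of initial data on the base does not help, since the mismatch is intrinsic to the circle map $(F^N)_\theta$ and independent of the fibre starting point. Your ``preliminary perturbation bounding the mean motion'' points in the right direction, but \emph{bounded} is not \emph{small}: what is actually needed is to kill the mismatch entirely, and doing that is the whole content of the argument, not a preliminary step.

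The paper does this by perturbing $F$ along the \emph{entire} return segment so that the composition becomes \emph{exactly} the rotation: $(F')^{\ell(\theta)}_\theta=T_{\ell(\theta)\alpha}$ for every $\theta$ in the tower base (Proposition~\ref{prop concatenation}). Here is where non--mode-locking is really used: from $\rho(F_{-\epsilon})<\rho(F)<\rho(F_\epsilon)$ one gets that a uniform $2\epsilon$-shift along a length-$N$ segment moves $(F^N)_\theta(y)$ by an amount of order $N$ (Lemma~\ref{lem promoting displacement}), which dominates the $o(N)$ drift and lets one hit any prescribed target for a single fibre point (Lemma~\ref{lem cancel mm}); one then processes $p\approx 1/\epsilon$ equally spaced fibre points in succession, each by a perturbation supported in the arc between already-controlled points so as not to destroy previous work (Lemma~\ref{lem local pert}), and a last $1/p$-small adjustment turns the composition into a rigid rotation. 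With $(F')^{\ell(\theta)}$ an exact rotation, the propagated $H$ closes up with zero error and no interpolation of $H$ is needed at all. Finally, the varying return times and orbits meeting $\partial K$ several times are handled not by ad hoc transition zones but by the $d$-mild stratification $K^{-1}\subset K^0\subset\cdots\subset K^d$ of Section~\ref{sect Stratification}, building the perturbation inductively over strata so that consistency along such orbits is automatic.
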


Theorem \ref{thm a ternary version} can be seen as a nonlinear version of the reducibility result \cite[Theorem 5]{ABD2} for $\mathrm{SL}(2,\R)-$cocycles \footnote{Indeed, it is proved in  \cite[Appendix C]{ABD2} that a $\mathrm{SL}(2,\R)-$cocycle is mode-locked if and only if it is \textit{uniformly hyperbolic}.}. For ${\rm SL}(2,\R)-$cocycles with high regularity over torus translations, KAM method and Renormalization (see \cite{E,AK, AFK}) are effective tools for studying reducibility. While in our case, we use a topological method. 
As an easy but interesting consequence of Theorem \ref{thm a ternary version}, we have the following.
\begin{thm}\label{thm2 a ternary version}
For $(X, g)$ as in Theorem \ref{thm a ternary version}, mode-locked $g-$forced maps form an open and dense subset of the space of $g-$forced maps.
\end{thm}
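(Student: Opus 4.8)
The strategy splits into two independent parts: openness and density.

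For openness: the set $\cal{ML}$ is defined by the condition that $\varepsilon \mapsto \rho(F_\varepsilon)$ is locally constant at $0$. First I would recall (this should be available from the theory, and presumably proved or cited earlier in the paper following Bjerklöv–Jäger) that $f \in \cal{ML}$ if and only if there is a "trapping region": a closed set bounded by continuous curves (an annulus-type region in $X \times \T$) that is mapped strictly into its own interior by some iterate $f^p$ composed with a suitable integer translation. Since $X \times \T$ is compact, this strict-inclusion condition is open in the $C^0$ topology: if $f^p$ maps the closed region $K$ into the interior of $K$, then any $f'$ with $d_{C^0}(f',f)$ small enough will still satisfy $(f')^p(K) \subset \mathrm{int}(K)$, hence $f' \in \cal{ML}$ as well. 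So openness is the easy part.

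For density: I would use Theorem \ref{thm a ternary version} as the main engine. Fix an arbitrary $g$-forced map $f$ and $\varepsilon > 0$; I want to find $f'' \in \cal{ML}$ with $d_{C^0}(f'',f) < \varepsilon$. If $f$ is already mode-locked there is nothing to do, so assume $f \notin \cal{ML}$. By Theorem \ref{thm a ternary version}, there is $f'$ with $d_{C^0}(f',f) < \varepsilon/2$ that is topologically conjugate, via some fibered homeomorphism $h$, to the rigid rotation $g \times R_{\rho(F)}$. Now the point is that the rigid product $g \times R_\alpha$ can be perturbed to become mode-locked: for instance, replace $R_\alpha$ by a circle diffeomorphism (or homeomorphism) $\phi$ with $d_{C^0}(\phi, R_\alpha)$ small that has a pair of periodic orbits (hence is mode-locked as a circle map, with a trapping interval $\overline{I} \subset I$ so that $\phi^p(\overline I)\subset I$). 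Then $g \times \phi$ is a $g$-forced map arbitrarily $C^0$-close to $g \times R_\alpha$, and the region $X \times \overline{I}$ is trapped under $(g\times\phi)^p$, so $g \times \phi \in \cal{ML}$. Conjugating back by $h$: the map $h (g\times\phi) h^{-1}$ is mode-locked (mode-locking is a conjugacy-invariant property, since conjugation does not change the rotation number nor its local behaviour under the $F_\varepsilon$ deformation — one should check that a fibered conjugacy intertwines the deformations appropriately, or simply that the trapped region $h(X \times \overline I)$ is carried to a trapped region for the conjugated map), and it can be made as $C^0$-close to $f' = h(g\times R_\alpha)h^{-1}$ as desired, by taking $\phi$ close enough to $R_\alpha$ (using uniform continuity of $h$ and $h^{-1}$ on the compact space $X \times \T$). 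Combining the two approximations: $d_{C^0}(h(g\times\phi)h^{-1}, f) < \varepsilon$, which gives density.

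The main obstacle I anticipate is the last step of the density argument: controlling the $C^0$-distance after conjugating back by $h$. The conjugacy $h$ produced by Theorem \ref{thm a ternary version} is merely a homeomorphism with no quantitative modulus of continuity, so while $\phi \to R_\alpha$ forces $g\times\phi \to g\times R_\alpha$ uniformly, pulling this through $h(\cdot)h^{-1}$ requires uniform continuity of $h$ (fine, $X\times\T$ compact) but one must be slightly careful that the perturbation stays within the class of $g$-forced maps admitting a lift — this is automatic since $g \times \phi$ plainly has the lift $g \times \tilde\phi$ and conjugation by $h$ (which has a lift $H$) preserves this structure. A secondary point to verify cleanly is that mode-locking passes to topological conjugates in the fibered category; this is where invoking the Bjerklöv–Jäger-type trapping-region characterization (rather than the definition via $\rho(F_\varepsilon)$) makes the argument transparent, since trapping regions transform covariantly under fibered conjugacy.
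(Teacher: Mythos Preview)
Your density argument is essentially identical to the paper's: apply Theorem~\ref{thm a ternary version} to conjugate $f$ (assumed not mode-locked) to a product $g\times R_{\rho(F)}$ via a fibered homeomorphism $h$, perturb the circle rotation to a mode-locked circle homeomorphism $Q$, and conjugate back. The paper handles the $C^0$-closeness after conjugation exactly as you anticipate, via uniform continuity of $h,h^{-1}$ on the compact $X\times\T$.

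The difference lies in how you establish openness of $\cal{ML}$ and its invariance under fibered conjugacy. You propose invoking a Bjerkl\"ov--J\"ager trapping-region characterization; this is only stated in \cite{BJ} for qpf-maps on $\T^2$, so you would need to check (or cite) its extension to general $g$-forced maps over a strictly ergodic base. The paper instead proves both facts directly and more elementarily in Corollary~\ref{cor ml is open}: if $f\in\cal{ML}$ with lift $F$, then $\rho(F_{-\epsilon})=\rho(F)=\rho(F_\epsilon)$ for some $\epsilon>0$, and any nearby lift $F'$ satisfies $(F_{-\epsilon})_\theta < F'_\theta < (F_\epsilon)_\theta$ pointwise, forcing $\rho(F')=\rho(F)$ by monotonicity. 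Conjugacy-invariance then follows since $HFH^{-1}$ is a lift of $hfh^{-1}$ and the characterization in Lemma~\ref{lem ml is open} is conjugacy-stable. Your route via trapping regions would also work once the characterization is in hand, and has the conceptual advantage of making both openness and conjugacy-invariance geometrically transparent; the paper's route avoids that overhead entirely.
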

We prove Theorem \ref{thm2 a ternary version} using Theorem \ref{thm a ternary version} as an intermediate step. This strategy is inspired by \cite{ABD2} in their study of Schr\"odinger operators.
In contrast to the result in \cite{ABD2}, the density of mode-locking for dynamically forced maps is true regardless of the range of the Schwartzman asymptotic cycle $G(g)$ (for its definition, see  \cite[Section 1.1]{ABD2}), while a $\mathrm{SL}(2,\R)-$cocycle $f$ over base map $g$ can be mode-locked only if $\rho(f) \in G(g) \mod \Z$. This is due to the fact that for dynamically forced maps, one can perform perturbations localised in the fiber,  a convenient feature that is not shared by $\mathrm{SL}(2,\R)-$cocycles.

We note that, even in the case where $X = \T$ and $g$ is given by an irrational rotation $g(\theta) = \theta+\omega$, Theorem \ref{thm2 a ternary version} improves the result in \cite{WZJ}. Indeed, in \cite{WZJ} the authors need to require the frequency $\omega$ to satisfy a topologically generic condition. In particular, it was unknown in \cite{WZJ} that  mode-locking could be dense for any Diophantine frequency $\omega$ (see the remark below \cite[Corollary 1.5]{WZJ}). Moreover, our result covers very general base maps.
Thus our Theorem \ref{thm2 a ternary version} can be viewed as a clear strengthening of the main result in \cite{WZJ}. Also, we can deduce from Theorem \ref{thm2 a ternary version} a generalisation of \cite[Theorem 1.6]{WZJ}.

\begin{cor}\label{thm3 a ternary version}
Let $(X,g)$ be as in Theorem \ref{thm a ternary version}, and let $\cal P$ denote the set of continuous maps from $\T$ to $\cal F_g$ endowed with the uniform distance. Then for a topologically generic $\hat{f} \in \cal P$, the function $\tau \mapsto \rho(\hat{f}(\tau))$ is locally constant on an open and dense subset of $\T$.
\end{cor}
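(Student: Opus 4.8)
The plan is to deduce this from Theorem \ref{thm2 a ternary version} by a standard Baire-category argument on the space $\cal P = C^0(\T, \cal F_g)$, which is a complete metric space under the uniform distance, hence a Baire space. For each pair of rationals $p/q$ and each open interval $J$ with rational endpoints in $\T$, I would consider the set $\cU_{p/q, J}$ of those $\hat f \in \cal P$ such that $\rho(\hat f(\tau)) = p/q \bmod D$ for all $\tau$ in some open set containing $\overline J$; then let $\cU$ be the union over all such $(p/q, J)$. One checks that $\cU$ is exactly the set of $\hat f$ for which $\tau \mapsto \rho(\hat f(\tau))$ is locally constant on an open dense subset of $\T$: if $\hat f \in \cU_{p/q,J}$ for a suitable family of $J$'s whose union is dense, this follows, and conversely if $\rho(\hat f(\cdot))$ is locally constant on a dense open set $V \subset \T$, then $V$ contains a countable dense union of rational intervals on each of which $\rho(\hat f(\cdot))$ is a (locally constant, hence constant by connectedness of the interval) rational, so $\hat f$ lies in the corresponding countably many $\cU_{p/q,J}$. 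Thus it suffices to show that a suitable countable intersection of open dense sets is contained in the locally-constant locus; concretely, I would fix a countable basis $\{J_n\}$ of $\T$ by rational open intervals and show that for each $n$ the set $\cV_n := \{\hat f \in \cal P : \exists \tau_0 \in J_n \text{ and an open } J \ni \tau_0 \text{ with } \rho(\hat f(\cdot)) \text{ constant on } J\}$ is open and dense in $\cal P$; then $\bigcap_n \cV_n$ is residual and every $\hat f$ in it has locally constant rotation number on a dense set (density of the locus follows since it meets every $J_n$).

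Openness of $\cV_n$ is the easier half: if $\hat f \in \cV_n$ witnessed by $\tau_0$ and an interval $J \ni \tau_0$ on which $\rho(\hat f(\tau)) \equiv c$, then in particular $\hat f(\tau_0) \in \cal{ML}$, and since $\cal{ML}$ is open (Theorem \ref{thm2 a ternary version}) there is $\delta>0$ so that any $g$-forced map within $d_{C^0}$-distance $\delta$ of $\hat f(\tau_0)$ is mode-locked with rotation number $c$; any $\hat g \in \cal P$ with $\sup_\tau d_{C^0}(\hat g(\tau),\hat f(\tau)) < \delta$ then has $\hat g(\tau_0) \in \cal{ML}$ with $\rho(\hat g(\tau_0)) = c$, and mode-locking of $\hat g(\tau_0)$ together with continuity of $\tau \mapsto \hat g(\tau)$ and of the rotation number gives an interval around $\tau_0$ on which $\rho(\hat g(\cdot))$ is constant, so $\hat g \in \cV_n$. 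Here one uses that $\rho$ is continuous on $\cal F_g$ and that the mode-locked locus is \emph{stably} mode-locked with the same rotation number, which is exactly what openness of $\cal{ML}$ provides.

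Density of $\cV_n$ is where the real content sits, and this is the step I expect to be the main obstacle. Given an arbitrary $\hat f \in \cal P$ and $\eps > 0$, I must produce $\hat g \in \cal P$ with $\sup_\tau d_{C^0}(\hat g(\tau), \hat f(\tau)) < \eps$ and some $\tau_0 \in J_n$ near which $\rho(\hat g(\cdot))$ is constant. Pick any $\tau_0 \in J_n$. By Theorem \ref{thm2 a ternary version}, $\cal{ML}$ is dense in $\cal F_g$, so there is $f' \in \cal{ML}$ with $d_{C^0}(f', \hat f(\tau_0)) < \eps/2$. The task is then to interpolate: modify $\hat f$ only for $\tau$ in a small interval $J \ni \tau_0$ inside $J_n$, pushing $\hat f(\tau_0)$ to $f'$ while keeping the $\eps$-bound and keeping $\tau \mapsto \hat g(\tau)$ continuous. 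Concretely, using that $\cal F_g$ sits inside the linear space of continuous real-valued functions on $X \times \R$ via lifts (or more carefully, using the fibered structure: write $F'$ and the lift $\hat F(\tau_0)$ of $\hat f(\tau_0)$ and convexly interpolate the lifts, which stays within $\cal F_g$ since the defining conditions $F_\theta(x+1) = F_\theta(x)+1$ and monotonicity in $x$ are preserved under convex combination), set $\hat g(\tau) = \hat f(\tau)$ for $\tau \notin J$ and, for $\tau \in J$, let $\hat g(\tau)$ be the $g$-forced map whose lift is $(1-\lambda(\tau))\hat F(\tau) + \lambda(\tau) \big(F' + (\hat F(\tau) - \hat F(\tau_0))\big)$ for a bump function $\lambda$ equal to $1$ at $\tau_0$, supported in $J$, with $J$ small enough (using continuity of $\hat f$) that $\|\hat F(\tau) - \hat F(\tau_0)\|_{C^0}$ is tiny on $J$; this keeps $d_{C^0}(\hat g(\tau), \hat f(\tau)) \le \|\hat F(\tau) - \hat f(\tau_0)\|$-type error plus $d_{C^0}(f',\hat f(\tau_0)) < \eps$ and makes $\hat g(\tau_0) = f' \in \cal{ML}$. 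Since $f' \in \cal{ML}$ is mode-locked, $\rho(\hat g(\cdot))$ is constant on a neighborhood of $\tau_0$, so $\hat g \in \cV_n$. The delicate points to get right are: (i) checking the convex interpolation of lifts genuinely lands in $\cal F_g$ (one must verify the resulting fiber maps are homeomorphisms of $\R$ — true because a convex combination of increasing homeomorphisms commuting with the unit translation is again such), and (ii) bookkeeping the $C^0$ estimates so the perturbation is uniformly $\eps$-small in $\tau$. With density and openness of each $\cV_n$ established, Baire's theorem finishes the proof. \qed
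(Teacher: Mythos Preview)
Your overall strategy---Baire category on $\cal P$, density of each $\cV_n$ via Theorem \ref{thm2 a ternary version} plus convex interpolation of lifts, and openness of $\cal{ML}$---matches the paper's proof almost exactly. The density half and the interpolation details are fine.

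There is, however, a genuine gap in your openness argument. You assert that if $\rho(\hat f(\cdot))$ is constant on an interval $J \ni \tau_0$ then ``in particular $\hat f(\tau_0) \in \cal{ML}$''. This is false: take $\hat f(\tau) \equiv f$ for some fixed $f \in \cal{UL}$; then $\rho(\hat f(\cdot))$ is globally constant, yet $\hat f(\tau_0) = f \notin \cal{ML}$. For such $\hat f$ one can perturb to $\hat g(\tau) = f_{\eps\sin(2\pi\tau)}$ (arbitrarily $C^0$-close), and since $f$ is unlocked the map $\tau \mapsto \rho(\hat g(\tau))$ is nowhere locally constant. Thus your $\cV_n$ as defined is \emph{not} open, and the Baire step does not go through. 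The fix is to replace $\cV_n$ by the strictly smaller set
\[
\cW_n := \{\hat f \in \cal P : \exists\, \tau_0 \in J_n \text{ with } \hat f(\tau_0) \in \cal{ML}\},
\]
which \emph{is} open (evaluation at $\tau_0$ is continuous and $\cal{ML}$ is open by Corollary \ref{cor ml is open}). Your density argument already lands in $\cW_n$, since you arrange $\hat g(\tau_0) = f' \in \cal{ML}$; and $\hat f \in \cW_n$ does imply $\rho(\hat f(\cdot))$ is locally constant near $\tau_0$, because $\hat f(\tau)$ stays in the $\cal{ML}$-neighbourhood of $\hat f(\tau_0)$ for $\tau$ close to $\tau_0$. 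This is precisely how the paper organises it: it perturbs so that $\hat f'(\tau) \in \cal{ML}$ on a neighbourhood of $\tau_0$, then invokes Baire.

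A minor side remark: your opening discussion with the sets $\cU_{p/q,J}$ presumes mode-locked rotation numbers are rational, which is not true for general $g$-forced maps (the rotation number lives in $\R/D$ and mode-locking imposes no rationality). Since you abandon that route for the $\cV_n$ formulation anyway, this does no harm, but it should be dropped.
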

\begin{proof}
We fix an abitrary $\tau_0 \in \T$. By  Theorem \ref{thm2 a ternary version},
for any $\hat{f} \in \cal P$,   any $\epsilon > 0$, one can find $f' \in \cal{ML}$ such that $d_{C^0}(f', \hat{f}(\tau_0)) < \epsilon$. Thus by convex interpolation using $f'$ and $\hat{f}(\tau)$ for $\tau$ close to $\tau_0$, and by Corollary \ref{cor ml is open}, we can construct $\hat{f}' \in \cal{P}$ close to $\hat{f}$ so that $\hat{f'}(\tau) \in \cal{ML}$ for every $\tau$ in a neighbourhood of $\tau_0$. It is clear that $\cal P$ is a complete metric space under the uniform distance. We conclude the proof by taking $\tau_0$ over a dense subset of $\T$ \clb and by Baire's category argument.
\end{proof}
It is direct to see that one can adapt the above proof so as to consider maps in $\cal P$ satisfying the twist condition  in \cite[Theorem 1.6]{WZJ}. 

\subsection{Background and further perspective }\label{More on quasi-periodically / dynamically forced maps}
A prominent example of qpf-map is the Arnold circle map,
\aryst
f_{\alpha, \beta, \tau}: \T^2 \to \T^2, \quad (\theta,x) \mapsto (\theta+\omega, x + \tau + \frac{\alpha}{2\pi}\sin(2\pi x) + \beta g(\theta) \mod \ 1),
\earyst
with parameters $\alpha \in [0,1], \tau, \beta \in \R$ and a continuous forcing function $g : \T \to \R$. It was introduced in \cite{DGO} as a simple model of an oscillator forced at two incommensurate frequencies. Mode-locking was observed numerically on open regions in the $(\alpha,\tau)-$parameter space, known as the Arnold tongues.\clb

Another well-known class of qpf-maps is the so-called (generalised) quasi-periodic Harper map,
\aryst
s_{E} : \T \times \overline{\R} \to \T \times \overline{\R}, \quad (\theta,x) \mapsto (\theta + \omega, V(\theta) - E - \frac{1}{x})
\earyst
where $V : \T \to \R$ is a continuous function and $E \in \R$. Here we use the identification $\overline{\R} \simeq \T$ to simplify the notations. This class of map arises  naturally in the study of $1D$ discrete  Schr\"odinger operators $(H_{\theta}u)_n = u_{n+1} + u_{n-1} + V(\theta + n \omega) u_n$. It is by-now well-understood that the spectrum of $H_{\theta}$ equals to the set of parameter $E$ such that, as a projective cocycle, $s_{E}$ is not uniformly hyperbolic. More refined links relating the dynamics of $s_{E}$ with the spectral property of $H_{\theta}$ have been found, for instance in \cite{A}. We refer the readers to \cite{D, Jit} for related topics as there is a vast literature dedicated to Schr\"odinger operators.

Just as qpf-maps are special cases of our dynamically forced maps, many results on quasi-periodic Harper maps was generalised to the general $\mathrm{SL}(2,\R)-$cocycles, motivated by the study of Schr\"odinger operators with general potential functions and the techniques therein. In \cite{ABD2}, Avila, Bochi and Damanik showed that the spectrum of a $1D$ Schr\"odinger operator with a $C^0$ generic potential generated by a map $g$ as in Theorem \ref{thm a ternary version} has open gaps at all the labels, which extends their previous result \cite{ABD} on Cantor spectrums. One may ask whether mode-locking  could be generic in much higher regularities. To this end,  Chulaevsky and Sinai  in \cite{CS} suggested that in contrast to the circle rotations, for translations on the two-dimensional torus the spectrum can be an interval for generic large smooth potentials (see also \cite[Introduction]{GSV}), in which case the mode-locked parameters for the Schr\"odinger cocycles were not dense. In the analytic category,  Goldstein, Schlag and Voda \cite[Theorem A, Remark 1.2(b)]{GSV} recently proved that for a multi-dimensional shift on torus with Diophantine frequency, the spectrum is an interval for almost every large trigonometric polynomial potential. This 
 hints  the failure of the genericity of mode-locking in higher regularity and higher dimension (see also \cite{JW}). Nevertheless, by combining  the method of the present paper and  ideas from Bochi's work \cite{B} on Lyapunov exponents, we can show that mode-locking for multi-frequency forced maps remains generic under some  stronger topology, such as the one induced by the norm $d_{C^{0,\infty}}(f,g) = \sup_{\theta\in X} d_{C^{\infty}}(f_{\theta},g_{\theta})$. We will treat this in a separate note.
\subsection{Idea of the proof.}
Our idea is originated from \cite{ABD, ABD2}.
Given $f$, a $g-$forced map which is not mode-locked,
we will perturbe $f$ into a direct product, modulo conjugation. This is rest on the basic observation that for any unlocked map, a small perturbation can promote linear displacement  for the iterates of any given point.

A key observation in the proof of Theorem \ref{thm a ternary version} is that the mean motion, which is sublinear in time, can be cancelled out by a small perturbation. The global perturbation is divided into local perturbations in Section \ref{A perturbation lemma} at finitely many stages, and is organised using the dynamical stratification in Section \ref{sect Stratification}. Theorem \ref{thm2 a ternary version} is an easy consequence of Theorem \ref{thm a ternary version}. Theorem \ref{thm a ternary version} and  \ref{thm2 a ternary version} are proved in Section \ref{Proof of the main results}.

\smallskip
\noindent{\textbf{Notation.}} Given a subset $M \subset X$, we denote by $int(M)$ the interior of $M$, and denote by $\overline{M}$ the closure of $M$ in $X$. We denote by $B(M,r)$ the $r-$open neighbourhood of $M$ in $X$ for any $r > 0$.
We say that a function $\varphi \in C^0(\R, \R)$ is strictly increasing at $t=t_0$ if $\varphi(t_0-\epsilon) < \varphi(t_0) < \varphi(t_0+ \epsilon)$ for any $\epsilon > 0$.

\smallskip

\noindent{\textbf{Acknowledgement.}}
The author  thanks Artur Avila for asking him the question and useful conversations, and thanks Tobias J\"ager for useful conversations and telling him a mistake on the definition of fibered rotation number in an earlier version of this paper. The author also thanks Kristian Bjerkl\"ov for a related reference.

\section{Preliminary} \label{prelim}

\subsection{Basic properties of dynamically forced maps}
In this subsection, we only require $g$ to be a uniquely ergodic homeomorphism of a compact topological space $X$.
We start by giving the following basic relations between $\cal{ML}, \cal{SL}$ and $\cal{UL}$.
\begin{lemma}\label{lem basic on ml sl ul} 
For any $g-$forced map $f \in \cal{SL}$, any $\varepsilon > 0$, there exists $f' \in \cal{ML}$ and  $f'' \in  \cal{UL}$ such that $d_{C^{0}}(f,f'),  d_{C^{0}}(f,f'') < \varepsilon$.
\end{lemma}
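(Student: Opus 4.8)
The plan is to use the defining property of semi-locked maps directly. Let $f \in \cal{SL}$ with lift $F$. By definition, $f$ is not mode-locked, yet $\varepsilon \mapsto \rho(F_{\varepsilon})$ is constant on one of the two one-sided intervals $(-\varepsilon',0]$ or $[0,\varepsilon')$. Without loss of generality, assume it is constant on $[0,\varepsilon')$ (the other case is symmetric, replacing $F$ by a suitable conjugate or simply reversing the sign of the parameter). Since $f \notin \cal{ML}$, the function cannot be constant on any two-sided neighborhood of $0$, hence $\varepsilon \mapsto \rho(F_{\varepsilon})$ is NOT constant on $[-\delta, 0]$ for any $\delta > 0$; combined with monotonicity of $t \mapsto \rho(F_t)$ (which is standard: adding a positive constant to each fiber map weakly increases the fibered rotation number), this forces $\rho(F_{-\delta}) < \rho(F_0)$ for all small $\delta > 0$, i.e. $\rho(F_t)$ is strictly increasing from the left at $t = 0$.

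**Producing $f''$ and $f'$.**

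For $f''$: take $f'' = f_{-\delta}$ for small $\delta > 0$, i.e. $f''(\theta,y) = (g(\theta), f_\theta(y) - \delta)$, with lift $F_{-\delta}$. Then $d_{C^0}(f'', f) \le \delta < \varepsilon$. I claim $f'' \in \cal{UL}$: the relevant parametrized family for $f''$ is $s \mapsto \rho((F_{-\delta})_s) = \rho(F_{s-\delta})$, and since $\rho(F_t)$ is strictly increasing from the left at $0$, for $0 < \delta$ small the point $t = -\delta$ lies in the open interval where $\rho(F_t)$ is strictly increasing (strictness from the left at $0$ propagates: for $t<0$ close to $0$ we have $\rho(F_{t-\eta})<\rho(F_t)<\rho(F_{t+\eta})$ for all small $\eta>0$ — the left strict inequality is the left-strict-increase at $0$ restricted, the right one follows because $\rho(F_{t+\eta}) \ge \rho(F_t)$ by monotonicity and equality on a right-neighborhood of $t$ would make $\rho$ constant on an interval abutting $0$ from the left, contradicting non-mode-locking). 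Hence $s \mapsto \rho((F_{-\delta})_s)$ is strictly increasing at $s=0$, so $f'' \in \cal{UL}$. For $f'$: take $f' = f_{\delta}$ for small $\delta \in (0, \varepsilon')$, with lift $F_\delta$. Then $d_{C^0}(f',f) \le \delta < \varepsilon$, and $s \mapsto \rho((F_\delta)_s) = \rho(F_{s+\delta})$ is constant for $s \in [0, \varepsilon' - \delta)$; moreover, since $\rho(F_t)$ is strictly increasing from the left at every $t$ in a left-neighborhood of $0$ as noted, but at $t = \delta > 0$ we are inside the flat interval $[0,\varepsilon')$, the function $s \mapsto \rho(F_{s+\delta})$ is in fact constant on $(-\delta', \varepsilon'-\delta)$ for small $\delta' > 0$ (flatness of $\rho(F_t)$ on an open interval containing $\delta$), so $f' \in \cal{ML}$.

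**Main obstacle.**

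The one genuinely delicate point is establishing that "not mode-locked" together with "constant on $[0,\varepsilon')$" upgrades to "strictly increasing from the left at $0$", and — more importantly — that $\rho(F_t)$ is actually \emph{flat on an open interval} strictly inside $[0,\varepsilon')$ so that $f'$ lands in $\cal{ML}$ rather than merely being semi-locked again. This requires the elementary but essential facts that $t \mapsto \rho(F_t)$ is continuous and nondecreasing, together with the observation that its flat regions are relatively open within any closed interval of flatness, so that choosing $\delta$ in the interior of $[0,\varepsilon')$ (say $\delta = \varepsilon'/2$) places it inside an open flat interval. The strict-increase claim for $f''$ follows by the same monotonicity/continuity bookkeeping: if $\rho(F_t)$ failed to be strictly increasing at some $t_0 < 0$ arbitrarily close to $0$, it would be locally constant there, and continuity plus the structure of the flat set would propagate flatness to an interval, which — using that $f \notin \cal{ML}$ and the already-known flatness on $[0,\varepsilon')$ — can be ruled out, or at worst we simply pick $\delta$ so that $-\delta$ avoids the (closed, nowhere-dense-near-$0$ on the left) flat set. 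I would record these monotonicity and continuity properties of $t \mapsto \rho(F_t)$ as a short preliminary observation (they are classical, analogous to the circle homeomorphism case) and then the proof above goes through in a few lines.
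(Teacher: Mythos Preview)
Your construction of $f' \in \cal{ML}$ --- translate into the interior of the flat half-interval --- is correct and is exactly what the paper does.

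Your construction of $f'' \in \cal{UL}$ has a genuine gap. Under your convention (flat on $[0,\varepsilon')$, hence by monotonicity $\rho(F_{-\delta}) < \rho(F_0)$ for all $\delta>0$), you assert that $f_{-\delta} \in \cal{UL}$ for \emph{every} small $\delta>0$. This is false in general: $t \mapsto \rho(F_t)$ on $(-\varepsilon',0)$ can behave like a devil's staircase, with infinitely many plateaus accumulating at $0$, and whenever $-\delta$ lies inside or at the boundary of such a plateau, $f_{-\delta}$ is in $\cal{ML}$ or $\cal{SL}$, not $\cal{UL}$. Your justification ``the left strict inequality is the left-strict-increase at $0$ restricted'' is a non sequitur: strict increase from the left at $0$ tells you $\rho(F_{-\delta})<\rho(F_0)$, but says nothing about $\rho(F_{-\delta-\eta})$ versus $\rho(F_{-\delta})$. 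Your fallback (``pick $\delta$ so that $-\delta$ avoids the closed, nowhere-dense flat set'') also fails: the set $\{t<0 : f_t \notin \cal{UL}\}$ is the union of all nondegenerate level sets of $\rho(F_\cdot)$, which is in general neither closed nor nowhere dense (again, devil's staircase); and avoiding merely the $\cal{ML}$ set, which is what ``flat'' naturally means, still leaves you with possible $\cal{SL}$ points.

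The idea you are missing is a short countability argument, which is what the paper uses. For every $t$ with $f_t \notin \cal{UL}$, the function $\rho(F_\cdot)$ is constant on some nondegenerate interval containing $t$, so the set $A$ of values $\rho(F_t)$ arising from such $t$ is countable. On the other hand, by continuity $\rho(F_\cdot)$ maps any interval $(-\delta,0)$ onto a nondegenerate interval (its right endpoint is $\rho(F_0)$, and all values are strictly below $\rho(F_0)$), which is uncountable. Hence there exist $t\in(-\delta,0)$ with $\rho(F_t)\notin A$, and any such $t$ gives $f_t\in\cal{UL}$. This replaces your flawed propagation-of-strictness argument with a two-line counting step.
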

\begin{proof}
Let $F$ be a lift of $f$. Without loss of generality, we assume that there exists $\varepsilon_0 \in (0, \varepsilon)$ such that $\rho(F_{\varepsilon'})  = \rho(F)$ for all $ \varepsilon' \in (-\varepsilon_0,0)$; and $\rho(F_{\varepsilon'}) > \rho(F)$ for all $\varepsilon' \in (0,\varepsilon_0)$. 

By choosing $f' = f_{\varepsilon'}$ for some $\varepsilon' \in (-\varepsilon_0, 0)$ sufficiently close to $0$, we can ensure that $d_{C^0}(f',f)<\varepsilon$. It is clear that $f' \in \cal{ML}$ by Definition \ref{def mode lock}. On the other hand, for each $\varepsilon' \in (0, \varepsilon_0)$ such that $f_{\varepsilon'} \in \cal{ML} \bigcup \cal{SL} = \cal{UL}^c$, there exists an open interval $J \subset (0, \varepsilon_0)$ so that $\rho(F_{\varepsilon'}) = \rho(F_{\varepsilon''}), \forall \varepsilon'' \in J$. This immediately implies that the set $A := \{ \rho(F_{\varepsilon'}) \mid \mbox{$\varepsilon' \in (0, \varepsilon_0)$ and $f_{\varepsilon'} \notin \cal{UL}$}\}$ is countable. Moreover, by our hypothesis on $F$, $\rho(F) \notin A$. Thus there exists arbitrarily small $\varepsilon'' \in (0, \varepsilon_0)$ with $f_{\varepsilon''} \in \cal{UL}$. This concludes the proof.
\end{proof}

\begin{lemma}\label{lem ml is open} 
A $g-$forced map $f$ with a lift $F$ is in $\cal{ML}$ if and only if there exists $\varepsilon > 0$ such that for any $F'$ which is a lift of a $g-$forced map and satisfies $d_{C^0}(F,F') < \varepsilon$, we have $\rho(F) =\rho(F')$. 
\end{lemma}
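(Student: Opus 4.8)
The plan is to derive both implications from the (non-strict) monotonicity of the fibered rotation number. First I would record the elementary fact that for a lift $F$ of a $g$-forced map each fiber map $F_\theta:\R\to\R$ is an increasing homeomorphism commuting with the unit translation — indeed a decreasing $F_\theta$ would contradict $F_\theta(x+1)=F_\theta(x)+1>F_\theta(x)$. Hence, if $F,G$ are lifts of $g$-forced maps with $F_\theta(x)\le G_\theta(x)$ for all $(\theta,x)$, then by induction $(F^n)_\theta(x)\le (G^n)_\theta(x)$ for all $n\ge 1$, so that $\rho(F)\le\rho(G)$ after dividing by $n$ and passing to the limit. This monotonicity is the only real input.

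For the direction ``$f\in\cal{ML}\Rightarrow$ the stated condition'', choose $\varepsilon_0>0$ with $\rho(F_{\varepsilon'})=\rho(F)$ for all $\varepsilon'\in(-\varepsilon_0,\varepsilon_0)$ and put $\varepsilon=\varepsilon_0/2$. If $F'$ is a lift of a $g$-forced map with $d_{C^0}(F,F')<\varepsilon$, then $F$ and $F'$ have the same base map $g$, so this $C^0$-bound amounts to $|F_\theta(x)-F'_\theta(x)|<\varepsilon$ for every $(\theta,x)$; equivalently $(F_{-\varepsilon})_\theta\le F'_\theta\le(F_\varepsilon)_\theta$ pointwise. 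Monotonicity then gives $\rho(F)=\rho(F_{-\varepsilon})\le\rho(F')\le\rho(F_\varepsilon)=\rho(F)$, whence $\rho(F')=\rho(F)$.

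For the converse I argue contrapositively. If $f\notin\cal{ML}$ then $\varepsilon'\mapsto\rho(F_{\varepsilon'})$ is constant on no neighborhood of $0$, so (since $\rho(F_0)=\rho(F)$) the set $\{\varepsilon':\rho(F_{\varepsilon'})\ne\rho(F)\}$ accumulates at $0$. Given any $\varepsilon>0$, choose such an $\varepsilon'$ small enough that $d_{C^0}(F,F_{\varepsilon'})<\varepsilon$, which is possible because $F$ and $F_{\varepsilon'}$ differ only by the translation $\varepsilon'$ in the $\R$-coordinate, so $d_{C^0}(F,F_{\varepsilon'})\to 0$ as $\varepsilon'\to 0$. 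Then $F_{\varepsilon'}$ is a lift of the $g$-forced map $f_{\varepsilon'}$, lies within $\varepsilon$ of $F$, and satisfies $\rho(F_{\varepsilon'})\ne\rho(F)$; hence the stated condition fails for every $\varepsilon>0$.

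I do not expect a serious obstacle: the argument is entirely order-theoretic. The only point needing a moment of care is the bookkeeping that identifies the abstract distance $d_{C^0}$ on lifts of $g$-forced maps sharing the base $g$ with the fiberwise sup-bound $\sup_{(\theta,x)}|F_\theta(x)-F'_\theta(x)|$, together with the observation that monotonicity of $\rho$ is invoked only in its non-strict form, so that no delicate strict-monotonicity statement is required here.
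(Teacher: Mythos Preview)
Your proof is correct and follows essentially the same approach as the paper: both directions rest on the non-strict monotonicity of $\rho$ under fiberwise order, and the ``only if'' direction is the sandwich $\rho(F_{-\epsilon})\le\rho(F')\le\rho(F_{\epsilon})$. The paper dismisses the converse as obvious, while you spell it out contrapositively via the family $F_{\varepsilon'}$; this is exactly the intended reading.
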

\begin{proof} We just need to show the \lq\lq only if'' part, for the other direction is obvious.
By Definition \ref{def mode lock}, there exists $\epsilon > 0$ such that $\rho(F_{-\epsilon} )= \rho(F) = \rho(F_{\epsilon})$. Take $\varepsilon \in (0,\epsilon)$. Then $(F_{-\epsilon})_{\theta}(y) < F'_{\theta}(y) <( F_{\epsilon})_{\theta}(y)$ for  any $F'$ in the lemma, and any $(\theta,y) \in X \times \R$. By monotonicity, we have $\rho(F_{-\epsilon}) \leq \rho(F') \leq \rho(F_{\epsilon})$. This ends the proof.
\end{proof}
\begin{cor}\label{cor ml is open}
 The set $\cal{ML}$ is open. Moreover, for any homeomorphism $h: X \times \T \to   X \times \T$ of the form $h(\theta,x) = (\theta, h_{\theta}(x))$ with a lift $H: X \times \R \to X \times \R$ (see Theorem \ref{thm a ternary version}),  we have $f \in \cal{ML}$ if and only if $hfh^{-1} \in \cal{ML}$.
\end{cor}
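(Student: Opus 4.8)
The plan is to derive both assertions from Lemma \ref{lem ml is open}, which identifies $\cal{ML}$ with the set of $g$-forced maps whose lifts are points of local constancy of $\rho$ in the $d_{C^0}$-topology on the set $\cL$ of all lifts of $g$-forced maps. For the openness of $\cal{ML}$ I would argue as follows. Fix $f\in\cal{ML}$ with a lift $F$ and let $\varepsilon>0$ be as provided by Lemma \ref{lem ml is open}. If $f'$ is a $g$-forced map with $d_{C^0}(f',f)$ small, one can choose a lift $F'$ of $f'$ with $d_{C^0}(F',F)<\varepsilon/2$: indeed $F_0-F$ for any lift $F_0$ of $f'$ is continuous, $1$-periodic in the fibre coordinate, and reduces mod $\Z$ to a function of small circle-norm, hence differs from some locally constant $\Z$-valued function by less than $\varepsilon/2$, and subtracting that function from the fibre coordinate of $F_0$ yields the desired $F'$. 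Then every $F''\in\cL$ with $d_{C^0}(F',F'')<\varepsilon/2$ satisfies $d_{C^0}(F,F'')<\varepsilon$, so $\rho(F'')=\rho(F)=\rho(F')$; Lemma \ref{lem ml is open} applied to $F'$ gives $f'\in\cal{ML}$, so $\cal{ML}$ is open.

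For the conjugation invariance, the first step is purely formal: with $F$ a lift of $f$ and $H$ a lift of $h$, the map defined fibrewise by $(HFH^{-1})_\theta=H_{g(\theta)}\circ F_\theta\circ H_\theta^{-1}$ reduces mod $\Z$ to $hfh^{-1}$ and commutes with the unit translation (since $F_\theta$, $H_\theta$ and $H_\theta^{-1}$ all do), so $hfh^{-1}\in\cal F_g$ with lift $HFH^{-1}$; an easy induction gives $\big((HFH^{-1})^n\big)_\theta=H_{g^n(\theta)}\circ (F^n)_\theta\circ H_\theta^{-1}$ for all $n\geq1$. The second step is the identity $\rho(HFH^{-1})=\rho(F)$: the functions $(\theta,x)\mapsto H_\theta(x)-x$ and $(\theta,x)\mapsto H_\theta^{-1}(x)-x$ are continuous and $1$-periodic in $x$, hence bounded on $X\times\R$ by some constant $C$, so inserting the formula for $\big((HFH^{-1})^n\big)_\theta$ into the definition of $\rho$ and discarding the two correction terms of modulus at most $C$ yields $\rho(HFH^{-1})=\lim_n\big((F^n)_\theta(H_\theta^{-1}(x))-H_\theta^{-1}(x)\big)/n=\rho(F)$, using that $\rho(F)$ is independent of the base point.

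The last step transfers the local constancy of $\rho$ through the conjugation. I would check that $G'\mapsto H^{-1}G'H$ is a $d_{C^0}$-continuous self-map of $\cL$ sending $HFH^{-1}$ to $F$: from $F=H^{-1}(HFH^{-1})H$ one gets $(H^{-1}G'H)_\theta(x)-F_\theta(x)=H_{g(\theta)}^{-1}\big(G'_\theta(H_\theta(x))\big)-H_{g(\theta)}^{-1}\big((HFH^{-1})_\theta(H_\theta(x))\big)$, and for $x\in[0,1]$ both arguments of $H_{g(\theta)}^{-1}$ lie in a fixed bounded window (by periodicity of $F_\theta(x)-x$ and $H_\theta(x)-x$), so uniform continuity of $H^{-1}$ there bounds the left-hand side by a modulus of continuity evaluated at $d_{C^0}\big(G',HFH^{-1}\big)$. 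Now given $f\in\cal{ML}$, take $\varepsilon$ from Lemma \ref{lem ml is open} for $F$ and $\varepsilon'>0$ such that $d_{C^0}(HFH^{-1},G')<\varepsilon'$ implies $d_{C^0}(F,H^{-1}G'H)<\varepsilon$; then $\rho(G')=\rho(H^{-1}G'H)=\rho(F)=\rho(HFH^{-1})$, where the first equality is the $\rho$-invariance of the previous step applied to $H^{-1}G'H$ and the conjugating homeomorphism $h$. Lemma \ref{lem ml is open} then gives $hfh^{-1}\in\cal{ML}$, and the converse follows by symmetry, replacing $h$ with $h^{-1}$. I expect the only real nuisance to be this last bit of bookkeeping — tracking bounded windows and uniform continuity to push $d_{C^0}$-estimates through $H^{\pm1}$ on the non-compact space $X\times\R$; everything else is a direct application of Lemma \ref{lem ml is open}.
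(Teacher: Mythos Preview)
Your proof is correct and follows essentially the same approach as the paper: both parts are derived from Lemma~\ref{lem ml is open} together with the observation that $HFH^{-1}$ is a lift of $hfh^{-1}$, using the conjugation-invariance of $\rho$ and continuity of $G'\mapsto H^{-1}G'H$. The paper's proof of the second assertion is much terser (it simply cites Lemma~\ref{lem ml is open} and Definition~\ref{def mode lock}, implicitly checking only the one-parameter family $(HFH^{-1})_t$ rather than all nearby lifts $G'$), but the underlying argument is the same as yours, and you have filled in the details it omits.
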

\begin{proof}
We claim that for any $f \in \cal{ML}$ with a lift $F$, for any $f' \in \cal{F}_g$ with $d_{C^0}(f,f') < 1/4$, there exists a lift of $F'$ of $f'$ such that $d_{C^0}(F,F') = d_{C^0}(f,f')$. Indeed, we take an arbitrary lift $F''$ of $f'$. For any $\theta \in X$, $F''_{\theta}$ is a lift of $f'_{\theta}$. Thus there is a function $\phi : X \to \Z$ such that $d_{C^0}(F''_{\theta}+\phi(\theta), F_{\theta}) = d_{C^0}(f'_{\theta}, f_{\theta})$ for any $\theta \in X$. By  continuity and $d_{C^0}(f,f') < 1/4$, $\phi$ is continuous. It suffices to take  $F'(\theta,x)= (g(\theta), F''_{\theta}(x)+\phi(\theta))$.

By Lemma \ref{lem ml is open}, the above claim immediately implies that $\cal{ML}$ is $C^0$ open. For the second statement, we note that for any lift $F$ of $f$, $HFH^{-1}$ is a lift of $hfh^{-1}$. Then we obtain $hfh^{-1} \in \cal{ML}$ by Lemma \ref{lem ml is open} and Definition \ref{def mode lock}.
\end{proof}

\begin{defi}\label{def fibered rotation number}
\label{def fibered rotation number}
For any $g-$forced map $f$ with a lift  $F$, for any integer $n > 0$ we set
\aryst
\underline{M}(F,n) = \inf_{(\theta, y) \in X \times \R} ((F^n)_{\theta}(y) - y) \ \ \mbox{and} \ \ 
\overline{M}(F,n) = \sup_{(\theta, y) \in X \times \R} ((F^{n})_{\theta}(y) - y ).
\earyst
\end{defi}
We collect some general properties of $g-$forced maps. The following is an immediate consequence of the unique ergodicity of $g$. We  omit the proof.
\begin{lemma}\label{lem mean motin o(N)}
Given a $g-$forced map $f$ with a lift  $F$. For any $\kappa_0 > 0$, there exists   $N_0 = N_0(F, \kappa_0) > 0$  such that for any $n > N_0$, we have $[\underline{M}(F,n),\overline{M}(F,n)] \subset n\rho(F) + (-n \kappa_0, n \kappa_0)$.
\end{lemma}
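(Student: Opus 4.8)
The plan is to use the standard subadditivity/superadditivity of the quantities $\overline{M}(F,n)$ and $\underline{M}(F,n)$ together with the unique ergodicity of $g$, which gives \emph{uniform} convergence of Birkhoff averages. First I would record the elementary cocycle inequalities: for all integers $m,n\ge 1$,
\[
\overline{M}(F,m+n)\le \overline{M}(F,m)+\overline{M}(F,n),\qquad \underline{M}(F,m+n)\ge \underline{M}(F,m)+\underline{M}(F,n),
\]
which follow by splitting $(F^{m+n})_\theta = (F^{n})_{g^{m}(\theta)}\circ (F^{m})_\theta$ and using that each $F_\theta$ is an orientation-preserving homeomorphism commuting with integer translations, so $F_\theta(x)-x$ is $1$-periodic and bounded. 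Fekete's lemma then yields $\lim_n \overline{M}(F,n)/n = \inf_n \overline{M}(F,n)/n$ and $\lim_n \underline{M}(F,n)/n=\sup_n \underline{M}(F,n)/n$, and both limits equal $\rho(F)$ since $\underline{M}(F,n)\le (F^n)_\theta(y)-y\le \overline{M}(F,n)$ and $((F^n)_\theta(y)-y)/n\to\rho(F)$.

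That alone only gives convergence, not the uniform control claimed. The key point — and the place where unique ergodicity enters — is to upgrade this to a statement that holds uniformly once $n>N_0$. I would argue as follows: fix $\kappa_0>0$. Since $\overline{M}(F,n)/n\to\rho(F)$, pick $m$ with $\overline{M}(F,m)/m < \rho(F)+\kappa_0/2$. For general $n$, write $n = qm + r$ with $0\le r<m$ and use subadditivity to get $\overline{M}(F,n)\le q\,\overline{M}(F,m) + \overline{M}(F,r) \le q\,\overline{M}(F,m) + C$, where $C=\max_{0\le r<m}\overline{M}(F,r)$ is a finite constant (here $\overline{M}(F,0):=0$). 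Dividing by $n$ and noting $qm/n\to 1$, $C/n\to 0$ as $n\to\infty$, we obtain $\overline{M}(F,n)/n < \rho(F)+\kappa_0$ for all $n$ larger than some $N_0'$. The symmetric argument with superadditivity gives $\underline{M}(F,n)/n > \rho(F)-\kappa_0$ for $n>N_0''$. Taking $N_0 = \max(N_0',N_0'')$ finishes the proof, since then $[\underline{M}(F,n),\overline{M}(F,n)]\subset (n\rho(F)-n\kappa_0,\,n\rho(F)+n\kappa_0)$.

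I should remark that, strictly speaking, the bare subadditivity argument above does not actually invoke unique ergodicity; it only uses that $F_\theta(x)-x$ is uniformly bounded in $(\theta,x)$, which holds for any continuous lift on a compact base. The role of unique ergodicity in the lemma's phrasing is really to guarantee that $\rho(F)$ is well defined and independent of $(\theta,y)$ in the first place — the \emph{fibered} rotation number makes sense precisely because Birkhoff sums of the (generalized) displacement cocycle converge uniformly over a uniquely ergodic base. The main (very mild) obstacle is therefore purely bookkeeping: being careful that $\overline{M}$ and $\underline{M}$ are genuinely sub/superadditive given that $F$ is only a homeomorphism, not a diffeomorphism, which is handled by the monotonicity of $x\mapsto F_\theta(x)$ and the commutation relation $F_\theta(x+1)=F_\theta(x)+1$. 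This is exactly why the authors say they ``omit the proof.''
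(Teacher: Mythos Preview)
Your Fekete/subadditivity route is natural, but there is a gap at the line ``both limits equal $\rho(F)$.'' From the sandwich $\underline{M}(F,n)\le (F^n)_\theta(y)-y\le\overline{M}(F,n)$ and the pointwise convergence $((F^n)_\theta(y)-y)/n\to\rho(F)$ you only obtain $L_2\le\rho(F)\le L_1$, where $L_1=\lim_n\overline{M}(F,n)/n$ and $L_2=\lim_n\underline{M}(F,n)/n$; nothing you wrote forces $L_1\le\rho(F)$. Yet your second paragraph needs exactly that inequality: choosing $m$ with $\overline{M}(F,m)/m<\rho(F)+\kappa_0/2$ presupposes $\inf_m\overline{M}(F,m)/m\le\rho(F)$. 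Consequently your closing remark---that unique ergodicity only serves to make $\rho(F)$ well-defined---is not right; unique ergodicity is precisely what the paper is invoking, and it is what closes this gap.

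A clean fix goes as follows. Set $u_n(\theta)=\sup_y\big((F^n)_\theta(y)-y\big)$, a continuous subadditive cocycle over $g$ with $\overline{M}(F,n)=\sup_\theta u_n(\theta)$. Since $u_m(\theta)/m\to\rho(F)$ pointwise and $|u_m|\le m\sup_\theta|u_1(\theta)|$, dominated convergence gives $\tfrac{1}{m}\int u_m\,d\mu\to\rho(F)$; fix $m$ with $\tfrac{1}{m}\int u_m\,d\mu<\rho(F)+\kappa_0/3$. For general $n$, decomposing $[0,n)$ into length-$m$ blocks starting at each offset $j\in\{0,\dots,m-1\}$ and averaging over $j$ yields
\[
u_n(\theta)\le C_m+\tfrac{1}{m}\sum_{k=0}^{n-1}u_m(g^k\theta)
\]
for a constant $C_m$ depending only on $m$. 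Now unique ergodicity of $g$, applied to the \emph{single} continuous function $u_m$, makes the Birkhoff average on the right converge to $\tfrac{1}{m}\int u_m\,d\mu$ \emph{uniformly} in $\theta$, which gives $\overline{M}(F,n)/n<\rho(F)+\kappa_0$ for all large $n$. The bound for $\underline{M}$ is symmetric. With this correction your argument is complete and is in line with what the authors had in mind when they called the lemma an immediate consequence of unique ergodicity.
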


We also have the following.
\begin{lemma}\label{lem promoting displacement}
Given a $g-$forced map $f$ with a lift  $F$.
If for some constant $\epsilon > 0$, we have $\rho(F_{-\epsilon}) < \rho(F) < \rho(F_{\epsilon})$, then there exist $\kappa_1 = \kappa_1(F, \epsilon) > 0$ and an integer $N_1 = N_1(F, \epsilon) > 0$ such that for any $n > N_1$, we have
\aryst
\overline{M}(F_{-\epsilon}, n) < \underline{M}(F, n) - n \kappa_1 < \overline{M}(F,n) + n\kappa_1 < \underline{M}(F_{\epsilon}, n).
\earyst
\end{lemma}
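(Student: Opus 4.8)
The plan is to combine the monotonicity of the functions $\varepsilon \mapsto \rho(F_\varepsilon)$ with Lemma~\ref{lem mean motin o(N)} applied simultaneously to the three lifts $F_{-\epsilon}$, $F$, and $F_\epsilon$. The hypothesis $\rho(F_{-\epsilon}) < \rho(F) < \rho(F_\epsilon)$ gives us a genuine gap between the three rotation numbers, and the key point is that $\underline{M}(F,n)$ and $\overline{M}(F,n)$ both stay within $o(n)$ of $n\rho(F)$, so for $n$ large the windows around $n\rho(F_{-\epsilon})$, $n\rho(F)$, $n\rho(F_\epsilon)$ separate, leaving room to insert the linear buffers $n\kappa_1$.

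First I would fix the three rotation numbers and set $\delta := \min\{\rho(F) - \rho(F_{-\epsilon}),\ \rho(F_\epsilon) - \rho(F)\} > 0$. Choose $\kappa_0 := \delta/4$ and apply Lemma~\ref{lem mean motin o(N)} to each of $F_{-\epsilon}$, $F$, $F_\epsilon$, obtaining a common threshold $N_1$ (the maximum of the three) such that for all $n > N_1$,
\[
[\underline{M}(F_{-\epsilon},n), \overline{M}(F_{-\epsilon},n)] \subset n\rho(F_{-\epsilon}) + (-n\kappa_0, n\kappa_0),
\]
and similarly for $F$ and for $F_\epsilon$. Then I would set $\kappa_1 := \delta/4 = \kappa_0$. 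For $n > N_1$ one has $\overline{M}(F_{-\epsilon},n) < n\rho(F_{-\epsilon}) + n\kappa_0 \le n\rho(F) - n\delta + n\kappa_0 = n\rho(F) - 3n\kappa_0$, while $\underline{M}(F,n) - n\kappa_1 > n\rho(F) - n\kappa_0 - n\kappa_1 = n\rho(F) - 2n\kappa_0$, which gives the first inequality. The middle inequality $\underline{M}(F,n) - n\kappa_1 < \overline{M}(F,n) + n\kappa_1$ is immediate since $\underline{M}(F,n) \le \overline{M}(F,n)$ and $\kappa_1 > 0$. The last inequality is symmetric to the first: $\overline{M}(F,n) + n\kappa_1 < n\rho(F) + 2n\kappa_0 \le n\rho(F_\epsilon) - n\delta + 2n\kappa_0 = n\rho(F_\epsilon) - 2n\kappa_0 < n\rho(F_\epsilon) - n\kappa_0 < \underline{M}(F_\epsilon,n)$.

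I do not anticipate a serious obstacle here; the statement is essentially bookkeeping once Lemma~\ref{lem mean motin o(N)} is in hand, and the only thing to be careful about is choosing the single constant $\kappa_1$ and the single threshold $N_1$ uniformly for all three maps and verifying that $4\kappa_0 \le \delta$ leaves enough slack for all three inequalities to hold strictly. One minor point worth checking is that $\rho(F_t)$ is monotone nondecreasing in $t$ (so that the strict inequalities at $t = \pm\epsilon$ are genuinely available), which follows from the fiberwise monotonicity of $F_t$ in $t$ already used in the proof of Lemma~\ref{lem ml is open}.
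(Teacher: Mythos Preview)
Your proposal is correct and follows exactly the approach the paper takes: the paper's proof simply states that the first and third inequalities are immediate consequences of Lemma~\ref{lem mean motin o(N)} and the hypotheses, and that the middle inequality is obvious. You have merely supplied the explicit choice of $\kappa_1$ and $N_1$ and the arithmetic that the paper leaves to the reader.
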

\begin{proof}
The first and the third inequality are immediate consequences of Lemma \ref{lem mean motin o(N)} and our hypotheses. The second inequality is obvious.
\end{proof}

\subsection{An inverse function theorem}

The following lemma is simple but convenient for constructing perturbations that depend continuously on parameters.
\begin{lemma} \label{lem inverse function}
Given $\varepsilon > 0$ and  a  metric space  $Y$. Let $\Sigma \subset Y \times \R$ be an open set so that $\pi_{Y}(\Sigma) = Y$, and let $\varphi : (-\varepsilon,\varepsilon) \times Y \to \R$ be a continuous function such that
\enmt
\item for any $x \in Y$, the function $t \mapsto \varphi(t,x)$ is monotonically increasing, and 
\aryst
\{z \in \R \mid (x,z) \in \Sigma\} \subset \varphi((-\varepsilon,\varepsilon) \times \{ x \}),
\earyst
\item for any $(t_0, x) \in (-\varepsilon,\varepsilon) \times Y$ such that $(x, \varphi(t_0,x)) \in \Sigma$, the function $t \mapsto \varphi(t,x)$ is strictly increasing at $t=t_0$.
\eenmt
Then there exists a continuous map $s : \Sigma \to (-\varepsilon,\varepsilon)$ such that
\aryst
\varphi(s(x,z), x) = z, \quad \forall (x,z) \in \Sigma.
\earyst
\end{lemma}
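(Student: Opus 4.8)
The plan is to define $s(x,z)$ pointwise as the solution of $\varphi(t,x)=z$ and then verify continuity. First I would fix $(x,z)\in\Sigma$. By hypothesis (1), $z$ lies in the image $\varphi((-\varepsilon,\varepsilon)\times\{x\})$, so the set $T_{x,z}:=\{t\in(-\varepsilon,\varepsilon)\mid \varphi(t,x)=z\}$ is nonempty; since $t\mapsto\varphi(t,x)$ is monotonically increasing and continuous, $T_{x,z}$ is a (possibly degenerate) closed subinterval of $(-\varepsilon,\varepsilon)$. Hypothesis (2) applied at any $t_0\in T_{x,z}$ — note $(x,\varphi(t_0,x))=(x,z)\in\Sigma$ — says $\varphi$ is strictly increasing at $t_0$, which forces $T_{x,z}$ to be a single point. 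Define $s(x,z)$ to be that point; this gives a well-defined map $s:\Sigma\to(-\varepsilon,\varepsilon)$ with $\varphi(s(x,z),x)=z$.

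It remains to prove $s$ is continuous. I would argue by contradiction: suppose $(x_n,z_n)\to(x,z)$ in $\Sigma$ but $s(x_n,z_n)\not\to s(x,z)$. Passing to a subsequence, $s(x_n,z_n)\to \tau$ for some $\tau\in[-\varepsilon,\varepsilon]$ with $\tau\neq s(x,z)$. The main point to rule out is $\tau\in\{-\varepsilon,\varepsilon\}$, i.e. that the solution escapes to the boundary; this is where hypothesis (1) — that the whole fibre $\{z'\mid(x,z')\in\Sigma\}$ is contained in the open image $\varphi((-\varepsilon,\varepsilon)\times\{x\})$ — is essential. Concretely, pick $t^-<s(x,z)<t^+$ in $(-\varepsilon,\varepsilon)$; by strict monotonicity at $s(x,z)$ we have $\varphi(t^-,x)<z<\varphi(t^+,x)$, so by continuity of $\varphi$, for large $n$ we get $\varphi(t^-,x_n)<z_n<\varphi(t^+,x_n)$, whence $s(x_n,z_n)\in(t^-,t^+)$ by monotonicity of $\varphi(\cdot,x_n)$. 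Thus $\tau\in[t^-,t^+]\subset(-\varepsilon,\varepsilon)$, ruling out boundary escape. Now by joint continuity of $\varphi$, $\varphi(\tau,x)=\lim_n\varphi(s(x_n,z_n),x_n)=\lim_n z_n=z$, so $\tau\in T_{x,z}=\{s(x,z)\}$, contradicting $\tau\neq s(x,z)$. Hence $s$ is continuous.

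The main obstacle is exactly the boundary-escape issue just addressed: without the containment in hypothesis (1), the pointwise-defined solution could fail to be continuous (or even exist) as $(x,z)$ approaches a point where the graph of $\varphi(\cdot,x)$ only meets the level $z$ at the endpoints of $(-\varepsilon,\varepsilon)$. Everything else — existence via the intermediate value theorem, uniqueness via strict monotonicity at the solution, and the subsequence argument for continuity — is routine. I would also remark that hypothesis (2) is used only at points of the solution set, which is all that is needed, and that $\varphi$ need not be strictly increasing everywhere.
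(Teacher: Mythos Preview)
Your proof is correct and follows essentially the same approach as the paper: both establish existence and uniqueness of $s(x,z)$ via hypotheses (1) and (2), then prove continuity by using strict monotonicity at $t_0=s(x,z)$ to get $\varphi(t^-,x)<z<\varphi(t^+,x)$ and continuity of $\varphi$ to trap $s(x',z')$ in $(t^-,t^+)$ for nearby $(x',z')$. The paper phrases this as a direct $\epsilon$--$\delta$ argument while you use the sequential version, but the content is the same; one minor remark is that in your continuity step the boundary-escape is actually handled by hypothesis (2) and continuity (as your concrete argument shows), not by hypothesis (1), which you already used for existence.
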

\begin{proof}
By (1), for any $(x,z) \in \Sigma$, there exists $t \in (-\varepsilon, \varepsilon)$ such that $\varphi(t, x) = z$. By (1),(2), we see that such $t$ is unique. Thus there exists a unique function $s : \Sigma \to (-\varepsilon, \varepsilon)$ such that $\varphi(s(x,z), x) = z$ for any $(x,z) \in \Sigma$. It remains to show that $s$ is continuous.

Given an arbitrary $(x,z) \in \Sigma$, let $t_0 = s(x,z)$. Then by $\varphi(t_0, x) = z$ and (2), we see that the function $t \mapsto \varphi(t,x)$ is strictly increasing at $t=t_0$. Thus for any $\epsilon \in (0, \varepsilon - t_0)$, there exists $\delta > 0$ such that $\varphi(t_0 + \epsilon, x) > z + 2\delta$. Since $\varphi$ is continuous, there exists $\kappa > 0$ such that for any $x' \in B(x, \kappa)$, $\varphi(t_0 + \epsilon, x') > z + \delta$. Then for any $(x',z') \in \Sigma \cap (B(x,\kappa) \times (z-\delta, z+\delta))$, we have $s(x',z') < t_0 + \epsilon$. Indeed, if $s(x',z') \geq t_0 + \epsilon$, then we would have a contradiction since
\aryst
z' = \varphi(s(x',z'), x') \geq \varphi(t_0 + \epsilon, x') > z + \delta > z'.
\earyst
 By a similar argument, we can see that for any $\epsilon \in (0, t_0 + \varepsilon )$, there exist $\kappa',\delta' > 0$ such that $s(x',z') > t_0 - \epsilon$ for any $(x',z') \in \Sigma \cap ( B(x, \kappa') \times (z - \delta', z+\delta'))$. This shows that $s$ is continuous and thus concludes the proof.
\end{proof}

\section{Stratification}\label{sect Stratification}

Let $g : X \to X$ be given as in Theorem \ref{thm a ternary version}. That is, $X$ is a compact metric space, and $g$ is a strictly ergodic homeomorphism with a non-periodic factor of finite dimension.
As in \cite{ABD2}, for any integers $n,N,d > 0$,  a compact subset $K \subset X$ is said
\enmt
\item \textit{$n-$good} if $g^{k}(K)$ for $0 \leq k \leq n-1$ are disjoint subsets,
\item \textit{$N-$spanning} if the union of $g^{k}(K)$ for $0 \leq k \leq N-1$ covers $X$,
\item \textit{$d-$mild} if for any $x \in X$, $\{g^{k}(x) \mid k \in \Z\}$ enters $\partial K$ at most $d$ times.  
\eenmt

The following is contained in \cite[Proposition 5.1]{ABD2}.
\begin{lemma} \label{lem choose returning set}
There exists an integer $d > 0$ such that for every integer $n > 0$, there exist an integer $D > 0$ and a compact subset $K \subset X$  that is  $n-$good, $D-$spanning and $d-$mild.
\end{lemma}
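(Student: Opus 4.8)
The plan is to follow the strategy behind \cite[Proposition 5.1]{ABD2}: pass to the finite-dimensional factor $(Y,\bar g)$ with $Y\subset\R^{d_0}$, build the required set there as the trace on $Y$ of a small Euclidean ball whose centre and radius are chosen in a residual way, and pull the result back to $X$ through the factor map $h$. The first step is the reduction: if $\bar K\subset Y$ is compact, $n$-good, $D$-spanning and $d$-mild for $\bar g$, then $K:=h^{-1}(\bar K)$ has the same three properties for $g$. Indeed $hg=\bar g h$ gives $g^k(K)=h^{-1}(\bar g^k\bar K)$, and since $h$ is onto, for subsets of $Y$ one has $h^{-1}(A)\cap h^{-1}(B)=\emptyset\iff A\cap B=\emptyset$ and $h^{-1}(A)=X\iff A=Y$, which transfers $n$-goodness and $D$-spanning verbatim; while continuity of $h$ gives $\partial_X(h^{-1}\bar K)\subseteq h^{-1}(\partial_Y\bar K)$, so $g^kx\in\partial_X K$ forces $\bar g^k(hx)\in\partial_Y\bar K$, hence $d$-mildness is inherited as well. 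Note that $\bar g$, being a factor of a strictly ergodic homeomorphism, is itself strictly ergodic — in particular minimal — and, $Y$ being infinite, aperiodic; and $d$ may be allowed to depend only on $d_0$.

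Next I would construct $\bar K$ inside $Y$. Fix $n$ and $p\in Y$. By aperiodicity $p,\bar g p,\dots,\bar g^{n-1}p$ are distinct, so some $\delta>0$ makes the balls $B(\bar g^i p,\delta)$, $0\le i\le n-1$, pairwise disjoint; by uniform continuity of $\bar g,\dots,\bar g^{n-1}$ there is $r_0>0$ such that for every $r\le r_0$ the compact set $\bar K_r:=Y\cap\overline{B}(p,r)$ satisfies $\bar g^i(\bar K_r)\subset B(\bar g^i p,\delta)$, hence is $n$-good. Minimality of $\bar g$ makes $\{\bar g^{-k}(Y\cap B(p,r))\}_{k\ge0}$ an open cover of $Y$, so compactness produces $D=D(r)$ with $\bar K_r$ being $D$-spanning. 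Thus every $\bar K_r$ with $r\in(0,r_0]$ is $n$-good and $D$-spanning, and the only remaining point is to choose $p$ and $r$ so that $\bar K_r$ is $d$-mild.

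The core — and the step I expect to be the main obstacle — is this last point. Since $\partial_Y\bar K_r\subseteq Y\cap S(p,r)$, I would try to make $T:=Y\cap S(p,r)$ a \emph{wandering} set, i.e. $\bar g^j(T)\cap T=\emptyset$ for all $j\ne0$; then two distinct times $k<k'$ with $\bar g^ky,\bar g^{k'}y\in T$ would place $\bar g^ky$ in $T\cap\bar g^{-(k'-k)}(T)=\emptyset$, so $\bar K_r$ would be $1$-mild. Now $T$ is wandering precisely when $r$ avoids $\mathrm{Bad}(p):=\bigcup_{j\ne0}\rho_p(W_j(p))$, where $\rho_p(z)=|z-p|$ and $W_j(p)=\{z\in Y:|z-p|^2=|\bar g^jz-p|^2\}$ is the zero set of $z\mapsto\langle z-\bar g^jz,\;z+\bar g^jz-2p\rangle$; and $n$-goodness already kills the coincidence sets for $0<|j|<n$, so only large $|j|$ matter. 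The plan is to prove that for a residual, full-measure set of centres $p$ each $W_j(p)$ is ``thin'' enough that $\rho_p(W_j(p))$ is closed, nowhere dense and Lebesgue-null; then $\mathrm{Bad}(p)$ is meagre and null, and any $r\in(0,r_0]\setminus\mathrm{Bad}(p)$ works, after which $K:=h^{-1}(\bar K_r)$ finishes the proof. The hard part is exactly the thinness: a Fubini argument over $p$ (for each $z$ the bad centres form a hyperplane, since $z-\bar g^jz\ne0$ by aperiodicity) only yields that $W_j(p)$ has empty interior for generic $p$, whereas one needs to control the \emph{image} $\rho_p(W_j(p))$, hence essentially the dimension of $W_j(p)$, uniformly over all $j$ — this is precisely where the finite dimensionality of $Y$ must be exploited, together with the unique ergodicity of $\bar g$ for uniformity over the uncountably many orbits. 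If such thinness is only obtainable after intersecting with $d_0$ level sets, one replaces the ball by a box $Y\cap\prod_i[a_i,b_i]$ with generic faces and gets $d=2d_0$ instead of $d=1$, which is still enough.
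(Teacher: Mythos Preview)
The paper does not prove this lemma itself; it simply invokes \cite[Proposition~5.1]{ABD2}. Your sketch follows the same overall architecture as that reference --- pass to the finite-dimensional factor $(Y,\bar g)$ with $Y\subset\R^{d_0}$, take $\bar K$ to be a small closed ball (or box) in $Y$, and pull back via $h$ --- and your reduction step, together with the arguments for $n$-goodness and $D$-spanning, are correct and cleanly stated.

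The genuine gap is in the $d$-mild step. Aiming for a \emph{wandering} sphere-section (hence $1$-mildness) is too strong once $\dim Y\ge 2$. Take for instance $Y=\T^2$ embedded in $\R^4$ via $(\theta_1,\theta_2)\mapsto(\cos 2\pi\theta_1,\sin 2\pi\theta_1,\cos 2\pi\theta_2,\sin 2\pi\theta_2)$ with $\bar g$ a minimal translation: each $W_j(p)$ is the zero set of a single nontrivial continuous function on $\T^2$, hence generically a closed curve, and its image $\rho_p(W_j(p))$ is a nondegenerate closed interval rather than a nowhere-dense set. Thus $\mathrm{Bad}(p)$ need not be meagre and no wandering radius need exist. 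Your box fallback inherits the same defect: asking each face $\{z_i=c\}\cap Y$ to be wandering is equally strong, and for general $(Y,\bar g)$ a coordinate level set need not be visited only once per orbit --- so the claimed bound $d=2d_0$ is unjustified.

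What \cite{ABD2} actually does is not to make the boundary wandering but to allow roughly $d_0$ returns and to run a dimension argument in the \emph{parameter} space $\R^{d_0}\times\R_{>0}$. For each tuple of $d_0+1$ distinct times one shows that the set of $(p,r)$ for which some orbit places those $d_0+1$ iterates on $S(p,r)$ has topological dimension at most $d_0$: on the stratum where the $d_0+1$ orbit points are affinely independent, $(p,r)$ is their circumcentre/circumradius and hence a continuous image of $Y$, while degenerate strata lie in a lower-dimensional affine hull and are handled inductively. A compact set of dimension $\le d_0$ in $\R^{d_0+1}$ is nowhere dense, the countable union over time-tuples is meagre, and any $(p,r)$ outside it yields a ball $Y\cap\overline{B}(p,r)$ that is $d$-mild with $d$ depending only on $d_0$. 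Your aside about ``intersecting with $d_0$ level sets'' is groping toward this idea, but the payoff is not a switch to boxes --- one stays with the ball and counts parameters.
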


Let  $K \subset X$ be a  $n-$good, $M-$spanning and $d-$mild compact subset. For each $x \in X$, we set
\aryst
&&\ell^{+}(x) = \min\{j > 0 \mid g^{j}(x) \in int(K)\}, \quad \ell^{-}(x) = \min\{j \geq 0 \mid g^{-j}(x) \in int(K) \}, \\
&&\ell(x) = \min\{j > 0 \mid g^{j}(x) \in K \}, \\
&&T(x) = \{j \in \Z \mid -\ell^{-}(x)  < j < \ell^{+}(x)\}, \quad T_B(x) = \{j \in T(x) \mid g^{j}(x) \in \partial K \}, \\
&&N(x) = \#T_B(x), \quad K^{i} = \{x \in K \mid N(x) \geq d-i\},\forall -1 \leq i \leq d.
\earyst
Let $Z^{i} = K^{i} \setminus K^{i-1} = \{x \in K \mid N(x) = d-i\}$ for each $0 \leq i \leq d$.

Lemma \ref{lem choose returning set} and the notations introduced above are minor modifications of those in the proof of \cite[Lemma 4.1]{ABD2}. We will use them in Section \ref{Proof of the main results} to construct a global perturbation from its local counterparts. In \cite{ABD2}, it is important to give a good upper bound for the constant $D$ appeared in Lemma \ref{lem choose returning set}, while in our case, we do not need such bound.
In the following, 
we collect some facts from \cite{ABD2}. 
\begin{lemma}\label{lemma fact} We have
\enmt 
\item For any $x \in K$, $\ell(x) \leq \ell^{+}(x)$ and $n \leq  \ell(x) \leq D$,
\item $T$ and  $T_B$ are upper-semicontinuous,
\item $T$ and $T_B$, and hence also $\ell$, are locally constant on $Z^{i}$,
\item $K^i$ is closed for all $-1 \leq i \leq d$ and $\emptyset = K^{-1}  \subset K^{0} \subset \cdots \subset K^{d} = K$,
\item For any $x \in K^{i}$, any $0 \leq m < \ell^{+}(x)$ such that $g^{m}(x) \in K$, we have $g^{m}(x) \in K^{i}$.
\eenmt
\end{lemma}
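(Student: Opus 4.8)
\noindent\emph{Proof strategy.}
The plan is to derive all five items from a single structural fact: the first-entry times $\ell^{+}$ and $\ell^{-}$ are upper-semicontinuous $\Z$-valued functions on $X$ (they are finite everywhere by minimality of $g$, since $int(K)\neq\emptyset$). Indeed, if $j_{0}=\ell^{+}(x)$ then $g^{j_{0}}(x)\in int(K)$, and openness of $int(K)$ together with continuity of $g^{j_{0}}$ furnishes a neighbourhood of $x$ on which $\ell^{+}\leq j_{0}$; the same works for $\ell^{-}$. Item (2) is then immediate: $T(x)$ is precisely the integer interval $(-\ell^{-}(x),\ell^{+}(x))$, which can only shrink under a small perturbation, and $T_{B}(x)=\{j\in T(x):g^{j}(x)\in\partial K\}$ shrinks too, because $\partial K$ is closed, so each $j\in T(x)$ with $g^{j}(x)\notin\partial K$ keeps that property nearby; intersecting the finitely many neighbourhoods involved gives upper-semicontinuity of $T_{B}$. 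In particular $N=\#T_{B}$ is upper-semicontinuous, so each $K^{i}=\{x\in K:N(x)\geq d-i\}$ is closed (being the intersection of $\{N\geq d-i\}$ with the closed set $K$); the inclusions $K^{i}\subset K^{i+1}$ are obvious, $K^{d}=K$ because $N\geq 0$, and $K^{-1}=\emptyset$ because $d$-mildness bounds $N(x)\leq d$. This settles (4).

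Item (1) I would handle directly: $\ell(x)\leq\ell^{+}(x)$ since $int(K)\subset K$; $\ell(x)\geq n$ because a return of a point of $K$ to $K$ in fewer than $n$ steps would make $g^{0}(K),\dots,g^{n-1}(K)$ overlap, contradicting $n$-goodness; and $\ell(x)\leq D$ because $D$-spanning forces $g^{D}(x)\in g^{k}(K)$ for some $0\leq k\leq D-1$, hence a return at the time $D-k\in[1,D]$.

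For (3) and (5) the engine is a bookkeeping identity that I would prove first: for $x\in K$ and $0\leq m<\ell^{+}(x)$, sliding the base point forward by $m$ along the current excursion gives $\ell^{+}(g^{m}x)=\ell^{+}(x)-m$ and $\ell^{-}(g^{m}x)=\ell^{-}(x)+m$ — because none of $g^{m+1}(x),\dots,g^{\ell^{+}(x)-1}(x)$ lies in $int(K)$ whereas $g^{\ell^{+}(x)}(x)$ does, and symmetrically on the past side. Hence $T(g^{m}x)=T(x)-m$, so $T_{B}(g^{m}x)=T_{B}(x)-m$ and $N(g^{m}x)=N(x)$; together with the hypothesis $g^{m}x\in K$ this is exactly (5). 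For (3), fix $x_{0}\in Z^{i}$: upper-semicontinuity of $T_{B}$ combined with the constancy $\#T_{B}\equiv d-i$ on $Z^{i}$ forces $T_{B}(x)=T_{B}(x_{0})$ for $x$ near $x_{0}$ in $Z^{i}$, so $T_{B}$ is locally constant on $Z^{i}$. To upgrade this to $T$, I would argue by contradiction: if $\ell^{+}(x)=m<\ell^{+}(x_{0})$ at points $x$ arbitrarily close to $x_{0}$, then $g^{m}x\in int(K)$ while $g^{m}x_{0}\notin int(K)$; the possibility $g^{m}x_{0}\notin K$ is excluded on a neighbourhood of $x_{0}$ since $K$ is closed, and the possibility $g^{m}x_{0}\in\partial K$ would put $m$ into $T_{B}(x_{0})=T_{B}(x)$ and hence force $g^{m}x\in\partial K$, a contradiction. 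Thus $\ell^{+}$, and symmetrically $\ell^{-}$, is locally constant on $Z^{i}$, so $T$ is; and since $\ell(x)=\min\bigl(\{\ell^{+}(x)\}\cup\{j>0:j\in T_{B}(x)\}\bigr)$, $\ell$ is locally constant on $Z^{i}$ as well.

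I expect the one genuinely delicate point to be this last step of (3) — showing that $T$, not merely $T_{B}$, is locally constant on $Z^{i}$ — where one must play the upper-semicontinuity of $\ell^{\pm}$ off against the closedness of both $K$ and $\partial K$ and the very definition of $Z^{i}$. Everything else is routine once the semicontinuity of $\ell^{\pm}$ is in place; these are of course precisely the facts isolated in \cite{ABD2}, transcribed into the present notation.
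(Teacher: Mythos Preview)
Your proof is correct and follows the same approach as the paper's, which gives only a terse sketch (items (1)--(2) are declared ``immediate'', item (3) is deferred to \cite{ABD2}, item (4) is reduced to (2) and $d$-mildness, and item (5) is the orbit-segment identity $\{g^{j}(x):j\in T(x)\}=\{g^{j}(g^{m}x):j\in T(g^{m}x)\}$, i.e.\ exactly your bookkeeping identity $T(g^{m}x)=T(x)-m$). You have simply unpacked the details, including the one step the paper hides behind the reference---the interplay between upper-semicontinuity of $T_{B}$, constancy of $\#T_{B}$ on $Z^{i}$, and closedness of $K$ and $\partial K$ to upgrade local constancy of $T_{B}$ to that of $T$.
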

\begin{proof}
Items (1),(2) are immediate consequences of the definition. Item (3) is essentially contained in the proof of the claim in  \cite[Proof of Lemma 4.1]{ABD2}. Item (4) follows from (2) and the fact that $K$ is $d-$mild. Item (5) is true since for any such $m$, we have $\{g^{j}(x) \mid j \in T(x)\} = \{g^{j}(g^{m}(x)) \mid j \in T(g^{m}(x))\}$, and as a consequence, $N(x) =N(g^m(x))$.
\end{proof}

In the rest of this paper, we denote by $\widetilde{Homeo}$ the set of  $1-$periodic orientation preserving homeomorphisms of $\R$. In other words, $\widetilde{Homeo}$ is the set of maps  which are  lifts of maps in the group $Homeo_+(\T)$ of orientation preserving homeomorphisms of $\T$. Given any integer $k \geq 1$ and $g_1,\cdots, g_k \in \widetilde{Homeo}$, we denote by $\prod_{i=1}^{k} g_i $ the map $ g_k \cdots g_1$. We set $\prod_{i=1}^{0} g_i =\Id$.

\section{Perturbation lemmata}\label{A perturbation lemma}
In this section, we fix a constant $\epsilon  \in ( 0, \frac{1}{4})$ and  a $g-$forced map $f$ with a lift $F$, satisfying
\ary \label{strictmono}
\rho(F_{-\epsilon}) < \rho(F) < \rho(F_{\epsilon}).
\eary 
We stress that $f$ is not necessarily unlocked. 
\subsection{Cancellation of the mean motion}
For any $\kappa > 0$ and  any integer $N > 0$, let
\aryst
\Gamma_N(F, \kappa) := \{ (\theta,y,z) \in X \times \R \times \R  \mid  |z-(F^{N})_{\theta}(y)| < N \kappa \}.
\earyst
\begin{lemma} \label{lem cancel mm}
Let $\kappa_1 = \kappa_1(F,\epsilon)> 0$ and $N_1 = N_1(F, \epsilon) > 0$ be given by Lemma \ref{lem promoting displacement}. Then for any $g-$forced map $\check{f}$ with a lift $\check{F}$ such that $d_{C^0}(\check{F},F) < \epsilon$, for any integer $N \geq N_1$, there exists a continuous map $\Phi^{\check{F}}_N : \Gamma_N(F, \kappa_1) \to \widetilde{Homeo}^{N}$ such that the following is true. For any $(\theta, y, z) \in \Gamma_N(F, \kappa_1)$,  let $\Phi^{\check{F}}_N(\theta,y,z) = (G_0,\cdots, G_{N-1})$, then
\enmt
\item $d_{C^0}(G_i, \check{F}_{g^{i}(\theta)}) < 2\epsilon$ for any $0 \leq i \leq N-1$, 
\item $G_{N-1}\cdots G_0(y) = z$,
\item if $z = (\check{F}^{N})_{\theta}(y)$, then $G_i = \check{F}_{g^{i}(\theta)}$ for any $0 \leq i \leq N-1$.
\eenmt
\end{lemma}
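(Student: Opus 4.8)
The goal is, given a point $(\theta,y,z)$ whose target $z$ lies within $N\kappa_1$ of the true image $(F^N)_\theta(y)$, to produce a composition of $N$ fiber homeomorphisms, each a $2\epsilon$-perturbation of the corresponding fiber map of $\check F$, that sends $y$ exactly to $z$, and does so continuously in $(\theta,y,z)$, reducing to $\check F$ itself when $z$ is already the correct image.

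\textbf{Step 1: Locate the target relative to the $\pm\epsilon$ flow.} By Lemma \ref{lem promoting displacement}, for $N \geq N_1$ we have
\[
\overline{M}(F_{-\epsilon},N) < \underline{M}(F,N) - N\kappa_1 < \overline{M}(F,N) + N\kappa_1 < \underline{M}(F_\epsilon,N).
\]
Since $(\theta,y,z) \in \Gamma_N(F,\kappa_1)$ means $|z-(F^N)_\theta(y)| < N\kappa_1$, and $(F^N)_\theta(y) \in [\underline{M}(F,N),\overline{M}(F,N)]$, it follows that
\[
(\check F^N_{-\epsilon})_\theta(y) \leq (F^N_{-\epsilon})_\theta(y) < z < (F^N_\epsilon)_\theta(y) \leq (\check F^N_\epsilon)_\theta(y),
\]
using $d_{C^0}(\check F,F)<\epsilon$ so that $\check F_{-\epsilon} < F$ and $F < \check F_\epsilon$ pointwise — wait, more carefully: $d_{C^0}(\check F,F)<\epsilon$ gives $F_\theta(x)-\epsilon < \check F_\theta(x) < F_\theta(x)+\epsilon$, hence $(\check F_{-\epsilon})_\theta < F_\theta$ and $F_\theta < (\check F_\epsilon)_\theta$, and the iterates are monotone in each fiber, so indeed $z$ is strictly sandwiched between $(\check F^N_{-\epsilon})_\theta(y)$ and $(\check F^N_\epsilon)_\theta(y)$ after pushing $N\kappa_1$ of room against the inequalities above. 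This sandwiching is what makes the next step possible.

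\textbf{Step 2: Realize $z$ by a one-parameter interpolation.} Consider the family $t \mapsto (\check F^N_t)_\theta(y) = \check F_{g^{N-1}(\theta),t}\cdots \check F_{\theta,t}(y)$ for $t \in (-\epsilon,\epsilon)$, where $\check F_{\cdot,t}(x) = \check F_\cdot(x)+t$. This is continuous and monotonically increasing in $t$, and strictly increasing at every $t$ (adding $t$ to each of $N$ composed maps strictly increases the composite). By Step 1 the value $z$ lies in the open range of this family. Apply Lemma \ref{lem inverse function} with $Y = \Gamma_N(F,\kappa_1)$ (or rather with parameter $(\theta,y)$ and $\Sigma$ the slice of $\Gamma_N$), $\varphi((\theta,y),t) = (\check F^N_t)_\theta(y)$, to obtain a continuous function $s = s(\theta,y,z) \in (-\epsilon,\epsilon)$ with $(\check F^N_{s})_\theta(y) = z$. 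Then set $G_i = \check F_{g^i(\theta),\,s} = \check F_{g^i(\theta)}(\cdot) + s$ for $0 \leq i \leq N-1$, and $\Phi^{\check F}_N(\theta,y,z) = (G_0,\dots,G_{N-1})$.

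\textbf{Step 3: Verify the three properties.} Property (2), $G_{N-1}\cdots G_0(y) = z$, is exactly $(\check F^N_s)_\theta(y)=z$. Property (1): each $G_i = \check F_{g^i(\theta)} + s$ differs from $\check F_{g^i(\theta)}$ by $|s| < \epsilon$, hence $d_{C^0}(G_i,\check F_{g^i(\theta)}) = |s| < \epsilon < 2\epsilon$; each $G_i$ is in $\widetilde{Homeo}$ since it is a lift of an orientation-preserving circle homeomorphism (translation of such). Property (3): if $z = (\check F^N)_\theta(y)$, then $t=0$ already solves $\varphi((\theta,y),0)=z$, so by uniqueness in Lemma \ref{lem inverse function} we get $s=0$ and $G_i = \check F_{g^i(\theta)}$. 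Continuity of $\Phi^{\check F}_N$ follows from continuity of $s$ (Lemma \ref{lem inverse function}) and of $\theta\mapsto \check F_{g^i(\theta)}$.

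\textbf{Main obstacle.} The delicate point is Step 1: ensuring the strict two-sided sandwiching of $z$ with enough margin. One must carefully chain together (i) the $N\kappa_1$ slack built into the definition of $\Gamma_N(F,\kappa_1)$, (ii) the displacement estimate of Lemma \ref{lem promoting displacement} which compares $F_{\pm\epsilon}$ with $F$ with exactly that $N\kappa_1$ margin, and (iii) the $C^0$-closeness $d_{C^0}(\check F,F)<\epsilon$ which lets one pass between $\check F_{\pm\epsilon}$ and $F_{\pm\epsilon}$ in the right directions via fiberwise monotonicity. If these three estimates did not line up, either the range of the interpolating family would not contain $z$, or the required perturbation size would exceed $\epsilon$ (and hence $G_i$ would fail to stay within $2\epsilon$ of $\check F_{g^i(\theta)}$ — although in fact we only use $\epsilon$ of room, leaving the factor $2$ as slack for later applications where $\check F$ itself is perturbed). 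Everything else — the monotonicity, the application of the inverse function lemma, the verification of the three items — is routine once the sandwiching is in place.
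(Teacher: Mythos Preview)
Your approach is exactly the paper's: interpolate via the one-parameter family $t\mapsto(\check F^N_t)_\theta(y)$ and apply Lemma~\ref{lem inverse function} to solve for $s(\theta,y,z)$, then set $G_i=(\check F_s)_{g^i(\theta)}$. However, Step~1 contains a quantitative slip that makes the sandwiching fail. The displayed inequality $(\check F^N_{-\epsilon})_\theta(y)\leq (F^N_{-\epsilon})_\theta(y)$ would require $\check F_\theta\leq F_\theta$ pointwise, which $d_{C^0}(\check F,F)<\epsilon$ does not give. What you correctly derive afterwards is only $(\check F_{-\epsilon})_\theta<F_\theta$, hence $(\check F^N_{-\epsilon})_\theta(y)<(F^N)_\theta(y)$; but since $z$ may lie \emph{below} $(F^N)_\theta(y)$ by up to $N\kappa_1$, this does not yield $(\check F^N_{-\epsilon})_\theta(y)<z$. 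The same problem occurs on the $+\epsilon$ side.

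The fix, which is what the paper does, is to run the interpolation over $t\in(-2\epsilon,2\epsilon)$ rather than $(-\epsilon,\epsilon)$. From $d_{C^0}(\check F,F)<\epsilon$ one gets $(\check F_{2\epsilon})_\theta\geq (F_\epsilon)_\theta$ and $(\check F_{-2\epsilon})_\theta\leq (F_{-\epsilon})_\theta$ pointwise, and then Lemma~\ref{lem promoting displacement} gives $(\check F^N_{2\epsilon})_\theta(y)\geq(F^N_\epsilon)_\theta(y)>(F^N)_\theta(y)+N\kappa_1>z$ (and symmetrically below). With this correction $|s|<2\epsilon$, so item~(1) yields exactly $d_{C^0}(G_i,\check F_{g^i(\theta)})<2\epsilon$, not the stronger $<\epsilon$ you claimed; the factor $2$ in the statement is not slack for later use but is actually needed here.
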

\begin{proof}
It is clear that for any $\theta \in X, y \in \R$, the function $\epsilon'  \mapsto  (\check{F}_{\epsilon'}^{N})_{\theta}(y)$ is strictly increasing.
By $d_{C^0}(\check{F}, F)  < \epsilon$,  we have $(\check{F}_{2\epsilon})_{\theta}(y) \geq (F_{\epsilon})_{\theta}(y)$ for any $\theta \in X$ and $y \in \R$. Then  by Lemma \ref{lem promoting displacement},
\aryst
 (\check{F}_{2\epsilon}^{N})_{\theta}(y)  \geq (F_{\epsilon}^{N})_{\theta}(y) >   (F^{N})_{\theta}(y) + N\kappa_1.
\earyst
Similarly, we have $(\check{F}^{N}_{-2\epsilon})_{\theta}(y)  < (F^{N})_{\theta}(y) - N\kappa_1$.
Then we can verify (1),(2) in Lemma \ref{lem inverse function} for $( 2\epsilon, X \times \R, \Gamma_N(F,\kappa_1))$ in place of $( \varepsilon, Y, \Sigma)$, and for function $(\epsilon', \theta, y) \mapsto (\check{F}_{\epsilon'})_{\theta}^{N}(y)$ in place of $\varphi$.
By  Lemma \ref{lem inverse function} there exists a continuous function $s : \Gamma_N(F, \kappa_1) \to  (-2\epsilon, 2\epsilon)$ such that $\Phi^{\check{F}}_N$ defined by
\aryst
\Phi^{\check{F}}_N(\theta,y,z):=((\check{F}_{s(\theta, y, z)})_{g^{i}(\theta)})_{i=0}^{N-1}
\earyst
satisfies our lemma.
\end{proof}

\subsection{Local perturbation}
For any $g-$forced map $f'$ with a lift denoted by $F'$, for any integer $N > 0$, we define
\aryst
\Omega_N(F') &:=& \{(\theta, \underline{y}, y, \overline{y}, z) \in X \times \R^4 \mid \underline{y} < y < \overline{y} \leq \underline{y}+1, z \in ((F'^{N})_{\theta}(\underline{y}), (F'^{N})_{\theta}(\overline{y}))  \}, \\
\overline{\Omega}_{N} &:=&\{(\theta, \underline{y}, y, \overline{y}) \in X \times \R^3 \mid \underline{y} < y < \overline{y} \leq \underline{y}+1 \}.
\earyst
\begin{lemma} \label{lem local pert} 
There exist $\kappa_2 = \kappa_2(F, \epsilon) \in (0, \epsilon)$ and $N_2 = N_2(F,\epsilon) > 0$ such that the following is true. For any $g-$forced map $\check{f}$ with a lift $\check{F}$ such that $d_{C^0}(\check{F},F) < \kappa_2$,  for any integer $N \geq N_2$, there exists a continuous map $\Psi^{\check{F}}_{0, N} : \Omega_N(\check{F}) \to \widetilde{Homeo}^{N}$ with the following properties: for any $(\theta, \underline{y}, y, \overline{y}, z) \in \Omega_N(\check{F})$, let $\Psi^{\check{F}}_{0, N}(\theta, \underline{y}, y, \overline{y}, z) = (G_0, \cdots, G_{N-1})$, then
\enmt
\item $d_{C^0}( G_i, \check{F}_{g^i(\theta)} ) < 2\epsilon$ for any $0 \leq i \leq N-1$,
\item $G_{N-1} \cdots G_0(y) = z$,
\item for any $1 \leq i \leq N$, for any $w \in [\overline{y}, \underline{y}+1]$, we have  $G_{i-1} \cdots G_{0}(w) = (\check{F}^{i})_{\theta}(w)$,
\item if $z = (\check{F}^{N})_{\theta}(y)$, then $G_i = \check{F}_{g^{i}(\theta)}$ for any $0 \leq i \leq N-1$.
\eenmt 
\end{lemma}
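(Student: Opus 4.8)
The plan is to turn the construction of $\Psi^{\check F}_{0,N}$ into a path‑finding problem. Write $a_i=(\check F^i)_\theta(\underline y)$, $b_i=(\check F^i)_\theta(\overline y)$, $J_i=[a_i,b_i]$, so $0<b_i-a_i\le 1$ and, by hypothesis, $z\in(a_N,b_N)$. Condition (3) forces $G_{i-1}\cdots G_0$ to carry $[\underline y,\overline y]$ onto $J_i$ fixing endpoints; conversely, if for each $i$ we pick a homeomorphism $J_i\to J_{i+1}$ agreeing with $\check F_{g^i(\theta)}$ at the endpoints and extend it by $\check F_{g^i(\theta)}$ on $\R\setminus\bigcup_k\operatorname{int}(J_i+k)$ and $1$‑periodically, we get $G_i\in\widetilde{Homeo}$ satisfying (3). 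Tracking the image of $y$, the task becomes: choose $p_0=y,p_1,\dots,p_N=z$ with $p_i\in(a_i,b_i)$ and homeomorphisms $J_i\to J_{i+1}$ sending $p_i\mapsto p_{i+1}$ and $2\epsilon$‑close to $\check F_{g^i(\theta)}$. The closeness is automatic once $|p_{i+1}-\check F_{g^i(\theta)}(p_i)|<2\epsilon$: take $\Lambda_{i+1}\circ\check F_{g^i(\theta)}$, where $\Lambda_{i+1}:J_{i+1}\to J_{i+1}$ fixes $a_{i+1},b_{i+1}$, is affine on $[a_{i+1},\check F_{g^i(\theta)}(p_i)]$ and on $[\check F_{g^i(\theta)}(p_i),b_{i+1}]$, and maps $\check F_{g^i(\theta)}(p_i)\mapsto p_{i+1}$; then $\sup_{J_{i+1}}|\Lambda_{i+1}-\id|\le|p_{i+1}-\check F_{g^i(\theta)}(p_i)|<2\epsilon$, and $\Lambda_{i+1}=\id$ if $p_{i+1}=\check F_{g^i(\theta)}(p_i)$. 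So the whole lemma reduces to reachability of $z$ from $y$ along the $J_i$ with per‑step jump $<2\epsilon$, together with making every choice continuous in the parameters.

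The reachability is the heart of the matter, and the only place where more than $d_{C^0}(\check F,F)$ small is used — it is here that \eqref{strictmono} (which fails for semi‑locked, and for mode‑locked, $f$) enters. I would fix the constants first: set $\delta:=\min\{\rho(F_\epsilon)-\rho(F),\rho(F)-\rho(F_{-\epsilon})\}>0$; using continuity of $t\mapsto\rho(F_t)$, choose $\kappa_2\in(0,\epsilon)$ with $\rho(F_{\pm\kappa_2})$ within $\delta/4$ of $\rho(F)$; apply Lemma \ref{lem mean motin o(N)} with $\kappa_0=\delta/8$ to $F_{\pm(2\epsilon-\kappa_2)}$ and $F_{\pm\kappa_2}$, and take $N_2$ larger than the resulting thresholds and than $2/\delta$. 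For $d_{C^0}(\check F,F)<\kappa_2$ and $N\ge N_2$, from $\check F_{2\epsilon}>F_{2\epsilon-\kappa_2}$, $\check F<F_{\kappa_2}$ and $\rho(F_{2\epsilon-\kappa_2})\ge\rho(F_\epsilon)$ one obtains $(\check F_{2\epsilon}^N)_\theta(x)-(\check F^N)_\theta(x)\ge\underline M(F_{2\epsilon-\kappa_2},N)-\overline M(F_{\kappa_2},N)>N\delta/2>1$ for all $\theta,x$, and symmetrically $(\check F_{-2\epsilon}^N)_\theta(x)<(\check F^N)_\theta(x)-1$; i.e.\ adding a $\pm2\epsilon$ kick at every step displaces every point by more than one full turn relative to $\check F$. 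Now let $R_0=\{y\}$ and $R_{i+1}=\{w\in(a_{i+1},b_{i+1}):\operatorname{dist}(w,\check F_{g^i(\theta)}(R_i))<2\epsilon\}$; each $R_i$ is a non‑empty open subinterval $(\alpha_i,\beta_i)$ of $(a_i,b_i)$ with $\beta_{i+1}=\min\{b_{i+1},\check F_{g^i(\theta)}(\beta_i)+2\epsilon\}$, whence by monotonicity $\beta_i\ge\min\{b_i,(\check F_{2\epsilon}^i)_\theta(y)\}$. Since $\overline y\le\underline y+1<y+1$ gives $b_N=(\check F^N)_\theta(\overline y)<(\check F^N)_\theta(y)+1<(\check F_{2\epsilon}^N)_\theta(y)$, we get $\beta_N=b_N$, and likewise $\alpha_N=a_N$, so $R_N=(a_N,b_N)\ni z$: an admissible path exists.

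For the continuous choice of path I would use a greedy scheme aimed at the $\check F$‑orbit of $y^*:=((\check F^N)_\theta)^{-1}(z)\in(\underline y,\overline y)$: put $p_0=y$ and let $p_{i+1}$ be the point of $[a_{i+1}+\eta_{i+1},b_{i+1}-\eta_{i+1}]\cap[\check F_{g^i(\theta)}(p_i)-(2\epsilon-\eta),\,\check F_{g^i(\theta)}(p_i)+(2\epsilon-\eta)]$ closest to $(\check F^{i+1})_\theta(y^*)$, with $\eta>0$ a fixed small constant and $\eta_{i+1}>0$ a continuous function of the data bounded by $\tfrac13\operatorname{dist}((\check F^{i+1})_\theta(y^*),\partial J_{i+1})$. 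The displacement estimate (applied with $2\epsilon-\eta$, after enlarging $N_2$) forces this path to follow $(\check F_{2\epsilon-\eta}^i)_\theta(y)$ until it meets the $\check F$‑orbit of $y^*$ and thereafter ride it; and even if it has not met that orbit by step $N-1$, the super‑unit displacement places $z$ inside the last feasible interval, so in all cases $p_N=z$; moreover if $z=(\check F^N)_\theta(y)$ then $y^*=y$, $p_i=(\check F^i)_\theta(y)$, $\Lambda_{i+1}=\id$ and $G_i=\check F_{g^i(\theta)}$, which is (4). Setting $\Psi^{\check F}_{0,N}(\theta,\underline y,y,\overline y,z):=(G_0,\dots,G_{N-1})$ with the $G_i$ built from the $\Lambda_{i+1}$ above, (1) is $\sup_{J_{i+1}}|\Lambda_{i+1}-\id|<2\epsilon$, (2) is $G_{N-1}\cdots G_0(y)=p_N=z$, and (3) holds because an easy induction shows $(\check F^i)_\theta([\overline y,\underline y+1])=[b_i,a_i+1]$ avoids $\bigcup_k\operatorname{int}(J_i+k)$. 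Continuity of $\Psi^{\check F}_{0,N}$ then follows from the explicit formulas for $p_i$, $\Lambda_{i+1}$ and $G_i$; the single delicate point is continuity of the greedy path where it hits $\partial J_{i+1}$ or the orbit of $y^*$, which is exactly the monotone solvability supplied by Lemma \ref{lem inverse function}. Thus the main obstacle is the reachability step — the use of \eqref{strictmono} via Lemma \ref{lem mean motin o(N)} — while the rest is routine bookkeeping.
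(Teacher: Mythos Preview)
Your approach is sound and genuinely different from the paper's. You cast the problem as selecting a path $p_0=y,\dots,p_N=z$ through the intervals $J_i=(a_i,b_i)$ with per-step jump $<2\epsilon$, then realize each step by a piecewise-affine corrector $\Lambda_{i+1}$ supported in $J_{i+1}$. The paper instead runs everything on a single parameter $t\in(-1,1)$: it introduces an explicit family $\phi_\epsilon(t,\underline y_i,\overline y_i)\in\widetilde{Homeo}$ supported in $(\underline y_i,\overline y_i)$, sets $G_i=\phi_\epsilon(t,\underline y_{i+1},\overline y_{i+1})\circ\check F_{g^i(\theta)}$ with the \emph{same} $t$ at every step, shows that $t\mapsto G_{N-1}\cdots G_0(y)$ is strictly increasing and sweeps out all of $((\check F^N)_\theta(\underline y),(\check F^N)_\theta(\overline y))$ as $t\to\pm1$ (this ``sweeping'' claim is exactly your reachability estimate, proved the same way via the displacement bound), and then invokes Lemma~\ref{lem inverse function} once to extract a continuous $s(\theta,\underline y,y,\overline y,z)$. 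The paper's device buys continuity of $\Psi^{\check F}_{0,N}$ for free and avoids all step-by-step bookkeeping; your route is more constructive and makes the mechanism transparent, at the cost of having to track continuity of the greedy path by hand.

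Two small points on your sketch. First, the margins $\eta_{i+1}$ are not needed and, as stated, can break the construction: you only bound $\eta_{i+1}$ by $\tfrac13\operatorname{dist}((\check F^{i+1})_\theta(y^*),\partial J_{i+1})$, which may exceed $2\epsilon-\eta$, and then the two intervals you intersect can be disjoint when $\check F_{g^i(\theta)}(p_i)$ is near $\partial J_{i+1}$. In fact the unclamped greedy rule already keeps $p_{i+1}$ strictly inside $J_{i+1}$, since the nearest-point projection of the target onto $[\check F_{g^i(\theta)}(p_i)-(2\epsilon-\eta),\check F_{g^i(\theta)}(p_i)+(2\epsilon-\eta)]$ always lands between $\check F_{g^i(\theta)}(p_i)$ and the target, both interior points. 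Second, your closing appeal to Lemma~\ref{lem inverse function} is misplaced: your greedy path is an explicit composition of continuous clamped projections, so continuity is immediate and no implicit-function argument is needed (or applicable in that form).
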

\begin{proof}
Let $\kappa_1 = \kappa_1(F,\epsilon), N_1=N_1(F,\epsilon)$ be given by Lemma \ref{lem promoting displacement}, and let $N_2 =  \max(10\kappa_1^{-1}, N_1)$.
We let $\kappa_2 = \kappa_2(F,\epsilon) \in (0,\epsilon)$ be  sufficiently small such that for any $n > N_2$, for any $f'$ with a lift $F'$ such that $d_{C^0}(F',F) < \kappa_2$, we have
\ary \label{term 201}
A_n :=\sup_{\theta \in X, y \in \R}| (F'^{n})_{\theta}(y) - (F^n)_{\theta}(y) | < \frac{1}{2}\kappa_1 n.
\eary
The existence of $\kappa_2$ is guaranteed by the compactness of $X$, and the formula 
$\lfloor A_{n+m} \rfloor \leq \lfloor A_{m} \rfloor + \lfloor A_{n} \rfloor +2$ for any integers $n,m \geq 1$.

For each $1 \leq i \leq N$, we set
\aryst
\underline{y}_i := (\check{F}^{i})_{\theta}(\underline{y}) \quad \mbox{and} \quad  \overline{y}_i := (\check{F}^{i})_{\theta}(\overline{y}).
\earyst
It is clear that $\underline{y}_i < \overline{y}_i$ for every $1 \leq i \leq N$, and $\underline{y}_N < z < \overline{y}_N$ for any $(\theta, \underline{y}, y, \overline{y}, z) \in \Omega_N(\check{F})$.

Let $\Delta_2 := \{(y,z) \in \R^2 \mid y < z \leq y+1\}$. We define a continuous map $\phi_{\epsilon} : (-1, 1) \times \Delta_2 \to \widetilde{Homeo}$ as follows. For every $t \in (-1,1)$, every $(y,z) \in \Delta_2$, we define $\phi_{\epsilon}(t, y, z)(x)$ for $x \in (y,z)$ by the following formula,
\aryst
\phi_{\epsilon}(t, y, z)(x) = \begin{cases} 
x + t \min(z-x, 2\epsilon), & \forall x \in [y+(1-t)(z-y), z), t \in (0,1), \\
y + (x-y)(1+\frac{t}{1-t}\min(t, \frac{2\epsilon}{z-y})), & \forall x \in (y, y+(1-t)(z-y) ), t \in (0,1), \\
x, & \forall x \in (y,z), t = 0, \\
x + t \min(x-y, 2\epsilon), & \forall x \in (y, z - (t+1)(z-y)] ,  t \in (-1,0),  \\
z - ( z - x )( 1 - \frac{t}{t+1}\min(-t, \frac{2\epsilon}{z-y})) , &\forall x \in (z - (t+1)(z-y), z), t \in (-1,0).
 \end{cases}
\earyst
The following is an illustration for $\phi_{\epsilon}(t,y,z)$ with $t \in [0,1)$.  

\begin{figure}[ht!]
\centering
\includegraphics[width=70mm]{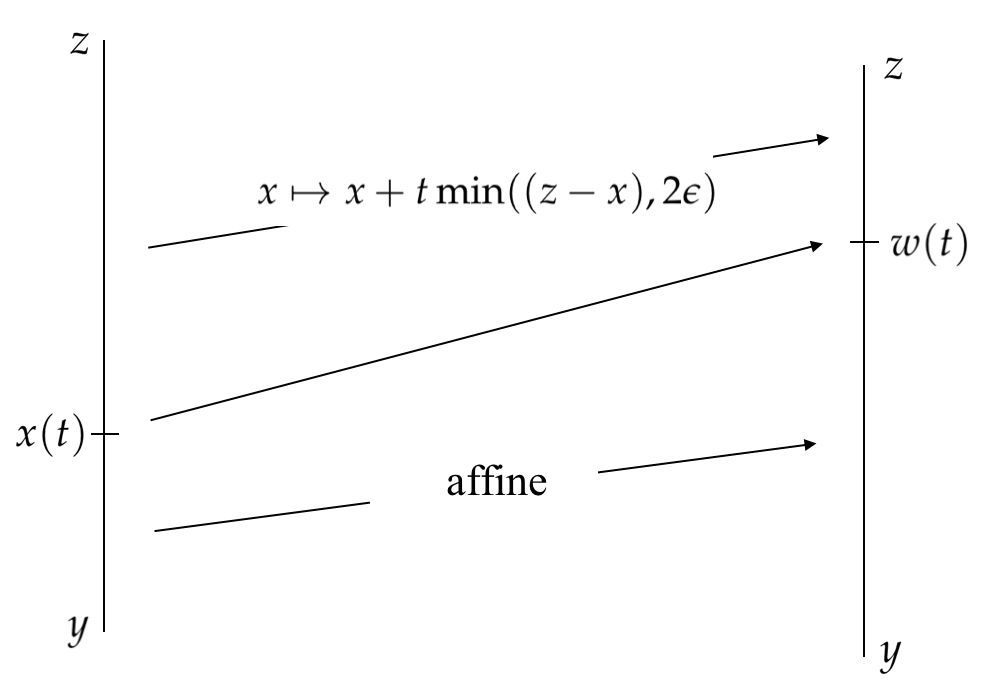}
\caption{
In the above picture, we set $x(t) = y + (1-t)(z-y)$ and $w(t) = y + (1-t)(z-y) + t \min(t(z-y),2\epsilon)$. 
}
\end{figure}

By straightforward computations, we see that $\phi_{\epsilon}(t,y,z)(x)$ is continuous in $t \in (-1,1)$, $(y,z) \in \Delta_2$ and $x \in (y,z)$. Moreover, for any $t \in (-1,1)$, $(y,z) \in \Delta_2$, the restriction of the map $x \mapsto \phi_{\epsilon}(t,y,z)(x)$ to $x \in (y,z)$ is a homeomorphism of $(y,z)$.
We then define $\phi_{\epsilon}(t,y,z)(x)$ for $x \in \Z + (y,z)$ by periodicity. 
For any $t \in (-1,1)$, we have
\aryst
\lim_{x \in (y,z), x \to y} \phi_{\epsilon}(t,y,z)(x) = y \quad \mbox{and} \quad \lim_{x \in (y,z), x \to z} \phi_{\epsilon}(t,y,z)(x) = z.
\earyst
Thus by letting $\phi_{\epsilon}(t,y,z)(x) = x$ for $x \in  \R \setminus (\Z + (y,z))$, we obtain a continuous map $\phi_{\epsilon}$ valued in $\widetilde{Homeo}$.

We can verify that the map $\phi_{\epsilon}$ satisfies the following properties:
\enmt
\item[$(a1)$] for any $t \in (-1,1), (y,z) \in \Delta_2$, $d_{C^0}(\phi_{\epsilon}(t,y,z), \Id) < 2\epsilon$,
\item[$(a2)$]  the function $t \mapsto \phi(t, y, z)(x)$ is \textit{strictly}  increasing for any $x \in (y, z)$, 
\item[$(a3)$] for any $(y,z) \in \Delta_2$, for any $x \in (y,z)$, we have $\lim_{t \to 1}\phi_{\epsilon}(t,y,z)(x) \in \{ x + 2\epsilon, z\}$ and $\lim_{t \to -1}\phi_{\epsilon}(t,y,z)(x) = \{ x - 2\epsilon, y \}$,
\item[$(a4)$] for any $t \in (-1,1)$, any $x \in \Z + [z, y +1]$, we have $\phi_{\epsilon}(t, y, z)(x) = x$,
\item[$(a5)$] for any $(y,z) \in \Delta_2$, we have $\phi_{\epsilon}(0, y, z) = \Id$.
\eenmt 
Indeed, to verify the above statements for $t \in (0,1)$, let $x(t)$ and $w(t)$ be as in Figure 1, then $(a1)$ follows from $x(t) \leq w(t) < x(t) + 2\epsilon$; $(a2)$ follows from that both $x+t\min((z-x), 2\epsilon)$ and $(w(t)-y)/(x(t)-y)$ are strictly increasing in $t$; $(a3)$ follows from noting that $x(t) \to y$ as $t \to 1$; $(a4)$ and $(a5)$ are direct to see. For $t \in (-1,0]$, we can obtain these properties in a similar way.

Let $N \geq N_2$ be in the lemma. Define a continuous map $\varphi : (-1,1) \times \overline{\Omega}_{N} \to \R$ by
\aryst
\varphi(t, \theta, \underline{y}, y, \overline{y}) = \prod_{i=1}^{N}[ \phi_{\epsilon}(t, \underline{y}_i, \overline{y}_i)\check{F}_{g^{i-1}(\theta)}](y).
\earyst

We have the following,

\textbf{Claim:}  we have $\lim_{t \to 1}\varphi(t, \theta, \underline{y}, y, \overline{y}) = (\check{F}^N)_{\theta}(\overline{y})$.
\begin{proof}
Let $y'_0 := y$, and for each $1 \leq k \leq N$, let
\aryst
 y'_k := \lim_{t \to 1} \prod_{i=1}^{k}[\phi_{\epsilon}(t, \underline{y}_i, \overline{y}_i)\check{F}_{g^{i-1}(\theta)}](y).
 \earyst
It is clear that $y'_k > \underline{y}_k$ for every $1 \leq k \leq N$.
Assume that for every $1 \leq k \leq N$, we have $y'_k < \overline{y}_k$.
Then by property $(a3)$ above, for every $1 \leq k \leq N$ we would have
\aryst
y'_{k} = \lim_{t \to 1}\phi_{\epsilon}(t, \underline{y}_k, \overline{y}_k)\check{F}_{g^{k-1}(\theta)}(y'_{k-1}) = \check{F}_{ g^{k-1}(\theta)}(y'_{k-1}) + 2\epsilon =(\check{F}_{2\epsilon})_{ g^{k-1}(\theta)}(y'_{k-1}).
\earyst
Then by \eqref{term 201},  $d_{C^0}(\check{F},F) < \kappa_2 < \epsilon$ and $N>N_2=\max(10\kappa_1^{-1}, N_1)$, we obtain
\aryst
&&\lim_{t \to 1}\prod_{i=1}^{N}[ \phi_{\epsilon}(t, \underline{y}_i, \overline{y}_i)\check{F}_{g^{i-1}(\theta)}](y) = (\check{F}_{2\epsilon}^{N})_{\theta}(y) \geq (F_{\epsilon}^N)_{\theta}(y) \\
& > &  (F^{N})_{\theta}(y) + N\kappa_1 > (F^{N})_{\theta}(\overline{y})+\frac{1}{2}N\kappa_1 > (\check{F}^{N})_{\theta}(\overline{y}).
\earyst
But this contradicts $y < \overline{y}$ and $(a4)$.
Thus there exists $1 \leq k \leq N$ such that $ y'_k = \overline{y}_k$.
Then by $(a3)$,$(a4)$ and $(a5)$, we have
\aryst
(\check{F}^{N})_{\theta}(\overline{y}) \geq\lim_{t \to 1}\varphi(t, \theta, \underline{y}, y, \overline{y}) \geq \prod_{i=k+1}^{N}\check{F}_{g^{i-1}(\theta)}(y'_k) = \overline{y}_N = (\check{F}^{N})_{\theta}(\overline{y}).
\earyst
This concludes the proof of the claim.
\end{proof}
Similar to the above claim, we also have $\lim_{t \to -1}\varphi(t, \theta, \underline{y}, y, \overline{y}) = \check{F}_{\theta}^{N}(\underline{y})$. We thereby verify condition (1) in Lemma \ref{lem inverse function} for map $\varphi$, and for $(1, \overline{\Omega}_N, \Omega_N(\check{F}))$ in place of $(\varepsilon,Y,\Sigma)$. Moreover, by $(a2)$ and $(a4)$, we can  directly verify that the function $t \mapsto \varphi(t, \theta, \underline{y}, y, \overline{y})$ is strictly increasing for any $(\theta, \underline{y}, y, \overline{y}) \in \overline{\Omega}_N$. This verify condition (2) in Lemma \ref{lem inverse function}.
Then by Lemma \ref{lem inverse function},  we obtain a continuous function $s : \Omega_N(\check{F}) \to (-1,1)$ such that for any $(\theta, \underline{y}, y, \overline{y}, z) \in \Omega_N(\check{F})$, we have
\aryst
\varphi(s(\theta, \underline{y}, y, \overline{y}, z), \theta, \underline{y}, y, \overline{y}) = z.
\earyst
We define $\Psi^{\check{F}}_{0, N}(\theta, \underline{y}, y, \overline{y}, z)=(G_0,\cdots, G_{N-1})$ where for each $0 \leq i \leq N-1$
\aryst
G_i = \phi_{\epsilon}(s(\theta, \underline{y}, y, \overline{y}, z), \underline{y}_{i+1}, \overline{y}_{i+1})\check{F}_{g^{i}(\theta)}.
\earyst
We can see that (1)-(4) follows from $(a1)$-$(a5)$ and Lemma \ref{lem inverse function}.
\end{proof}

\subsection{Concatenation}
In this section, we briefly denote $\rho(F)$ by $\rho$.
\begin{prop}\label{prop concatenation}
 There exist $\kappa_3 = \kappa_3(F,\epsilon) \in ( 0, \epsilon)$, $N_3 = N_3(F, \epsilon ) > 0$ such that  for any $g-$forced map $\check{f}$ with a lift $\check{F}$ satisfying $d_{C^0}(\check{F}, F) < \kappa_3$, for any integer $N \geq N_3$, there exists a continuous function $\Psi^{\check{F}}_N : X \to \widetilde{Homeo}^{N}$ such that the following is true: for any $\theta \in X$, let $\Psi^{\check{F}}_N(\theta) = (G_0, \cdots, G_{N-1})$, then
\enmt
\item $d_{C^0}(G_i, \check{F}_{g^{i}(\theta)}) < 2\epsilon$ for any $0 \leq i \leq N-1$,
\item $G_{N-1}\cdots G_{0}(y) = y + N \rho$ for any $y \in \R$,
\item if $(\check{F}^N)_{\theta}(y) = y + N \rho$ for any $y \in \R$, then $G_i = \check{F}_{g^{i}(\theta)}$ for any $0 \leq i \leq N-1$.
\eenmt

\end{prop}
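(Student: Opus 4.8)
Write $c:=N\rho$. The assertion is that, over one block of $N$ steps, a $2\epsilon$-small, $\theta$-continuous perturbation of $\check F$ composes to the \emph{exact} rigid rotation $R_c$, trivially so whenever $(\check F^N)_\theta$ already equals $R_c$. The plan is to combine the cancellation lemma \ref{lem cancel mm} with the local perturbation lemma \ref{lem local pert}. First I would fix the constants: let $\kappa_1=\kappa_1(F,\epsilon)$, $N_1=N_1(F,\epsilon)$ come from Lemma \ref{lem promoting displacement} (which uses \eqref{strictmono}) and $\kappa_2=\kappa_2(F,\epsilon)$, $N_2=N_2(F,\epsilon)$ from Lemma \ref{lem local pert}; then, using Lemma \ref{lem mean motin o(N)} together with the $C^0$-continuity of the fibered rotation number, choose $\kappa_3\in(0,\min(\kappa_1,\kappa_2))$ and $N_3>\max(N_1,N_2)$ so large that, for every $N\geq N_3$ and every lift $\check F$ with $d_{C^0}(\check F,F)<\kappa_3$, the defect $|(\check F^N)_\theta(y)-y-c|$ is $<\tfrac14 N\kappa_1$ uniformly in $(\theta,y)$ (and likewise for $F$). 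Finally split $\{0,\dots,N-1\}=\{0,\dots,N_\ast-1\}\sqcup J$ with $N_\ast\in[N_1,2N_1]$ and $|J|=N-N_\ast$, both at least $\max(N_1,N_2)$.

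Next I would \emph{cancel the mean motion} on the initial block: apply Lemma \ref{lem cancel mm} over $\{0,\dots,N_\ast-1\}$ with base point $y=0$ and target $z=N_\ast\rho$, which is admissible since the defect bound gives $(\theta,0,N_\ast\rho)\in\Gamma_{N_\ast}(F,\kappa_1)$. This yields $\theta$-continuous $G_0,\dots,G_{N_\ast-1}$ with $d_{C^0}(G_i,\check F_{g^i(\theta)})<2\epsilon$ whose composition $\Xi_\ast=G_{N_\ast-1}\cdots G_0=(\check F_{s(\theta)})^{N_\ast}_\theta$ satisfies $\Xi_\ast(0)=N_\ast\rho$, with $G_i=\check F_{g^i(\theta)}$ when no shift is needed; one may also take $N_\ast$ so that $|s(\theta)|$ is as small as we wish.

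Over the remaining block $J$ I would then \emph{flatten the composition onto the rotation}: construct factors $G_{N_\ast},\dots,G_{N-1}$, each $2\epsilon$-close to the corresponding $\check F$-fibre and continuous in $\theta$, whose composition $\Xi_J$ satisfies $\Xi_J\circ\Xi_\ast=R_c$. The point is that $\Xi_J$ is forced to be $R_c\circ\Xi_\ast^{-1}$, and by the defect bound this homeomorphism differs in $C^0$-distance from the unperturbed $(\check F^{|J|})_{g^{N_\ast}(\theta)}$ by less than $N\kappa$ for a $\kappa<2\epsilon$, i.e.\ less than the total per-step budget over the $|J|\sim N$ steps of $J$; so there is room to realise $\Xi_J$ exactly. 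Concretely I would do this by a $\phi_\epsilon$-type construction in the spirit of Lemma \ref{lem local pert}, organised over a finite nested family of arcs whose union is $\T$: at each stage one pushes the running composition onto $R_c$ over one more arc, invoking the orbit-preservation property (as in clause (3) of Lemma \ref{lem local pert}) to leave the arcs already linearised untouched, with $\theta$-continuity supplied throughout by Lemma \ref{lem inverse function}. Setting $\Psi^{\check F}_N(\theta):=(G_0,\dots,G_{N-1})$, clauses (1)--(3) of the proposition follow from the corresponding clauses of Lemmata \ref{lem cancel mm} and \ref{lem local pert} at each stage; in particular (3) holds since every stage degenerates to the identity perturbation once $(\check F^N)_\theta=R_c$.

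I expect the last step to be the main obstacle: producing a composition that is \emph{exactly} $R_c$ for \emph{all} $y$, and continuously in $\theta$, rather than merely $C^0$-close on a finite set of points, is what forces the bookkeeping of the nested arcs and the careful use of orbit preservation, and it is the place where the strict inequality \eqref{strictmono} (through $\kappa_1$) is genuinely needed, to guarantee enough slack to absorb the sublinear mean-motion defect.
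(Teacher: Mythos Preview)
Your overall strategy --- one application of Lemma~\ref{lem cancel mm} followed by iterated uses of Lemma~\ref{lem local pert}, with continuity in $\theta$ coming from Lemma~\ref{lem inverse function} --- is exactly the paper's. The gap is in your last step, which you rightly flag as the main obstacle but do not actually resolve. You propose to ``push the running composition onto $R_c$ over one more arc'' at each stage, so that a finite family of arcs eventually covers $\T$. But Lemma~\ref{lem local pert} does not linearise an \emph{arc}: it sends a \emph{single} marked point $y$ to a prescribed target $z$ while leaving the complementary arc $[\overline y,\underline y+1]$ untouched. Iterating it gives agreement with $R_c$ only at finitely many points, never on a full arc; and a new $\phi_\epsilon$-type gadget that would linearise whole arcs within the $2\epsilon$ budget is neither supplied nor obviously available.

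The paper closes this gap with a simple trick you are missing. Fix $p=\lceil 10\epsilon^{-1}\rceil$ equally spaced points $y_i=(i-1)/p$ with targets $z_i=y_i+N\rho$. Split $\{0,\dots,N-1\}$ into one \emph{long} initial block (of length $N-(p-1)N'_2-1$, where $N'_2$ is a fixed constant) on which Lemma~\ref{lem cancel mm} sends $y_1$ to the right intermediate value, followed by $p-1$ \emph{short} blocks of length $N'_2$ on each of which Lemma~\ref{lem local pert} fixes the next $y_k$ while preserving the previously placed $y_1,\dots,y_{k-1}$ via clause~(3). After these $p$ stages one has $G_{N-2}\cdots G_0$ defined and $\check F_{g^{N-1}(\theta)}G_{N-2}\cdots G_0(y_i)=z_i$ for all $i$. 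Now comes the point you are lacking: since the $p$ points are $1/p$-dense and monotonicity forces the composition to lie between consecutive $z_i$'s, the map $H:=(\check F_{g^{N-1}(\theta)}G_{N-2}\cdots G_0)^{-1}+N\rho$ fixes each $z_i$ and hence satisfies $d_{C^0}(H,\Id)\le 1/p<\epsilon$. Setting $G_{N-1}:=H\,\check F_{g^{N-1}(\theta)}$ then gives $G_{N-1}\cdots G_0=R_c$ \emph{exactly}, with $d_{C^0}(G_{N-1},\check F_{g^{N-1}(\theta)})<\epsilon<2\epsilon$. Note also that the paper puts the cancellation lemma on the long block and the local perturbations on short blocks of fixed size; your reversed allocation (cancellation on a block of size $\sim N_1$) would require the mean-motion estimate already at that short scale, which you have not arranged.
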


\begin{proof}
We let $N_0 = N_0(F, \frac{1}{3}\kappa_1)$ be given by Lemma \ref{lem mean motin o(N)}; let $\kappa_1= \kappa_1(F, \epsilon), N_1 = N_1(F,\epsilon)$ be given by Lemma \ref{lem promoting displacement}; let $\kappa_2 = \kappa_2(F,\epsilon)$, $N_2 = N_2(F,\epsilon)$ be given by Lemma \ref{lem local pert}. We let $\kappa_3 = \kappa_3(F,\epsilon) \in (0, \min(\kappa_2, \epsilon))$ be a sufficiently small constant  depending only on $F$ and $\epsilon$, such that for any $n > N_0$, for any $g-$forced map $f'$ with a lift $F'$ satisfying  $d_{C^0}(F',F) < \kappa_3$,  we have
\ary \label{choose kappa 3}
B_n :=\sup_{\theta \in X, y \in \R} |(F'^{n})_{\theta}(y) - y - n \rho| < \frac{2}{3}n\kappa_1.
\eary
The existence of $\kappa_3$ is guaranteed by the compactness of $X$ and the formula
$B_{p+q} \leq B_p + B_q$ for any integers $p,q \geq1$.

We let 
\ary \label{term 100}
N'_2 = N_0 + N_2, \quad p = \lceil 10\epsilon^{-1}\rceil \quad \mbox{and} \quad N_3 = N_1 +10(p-1)N'_2 +1.
\eary
For each $1 \leq i \leq p$, we set $y_i := \frac{i-1}{p}$.
Take an arbitrary $N > N_3$, set $z_i := y_i + N\rho$ for each $1 \leq i \leq p$. Given $\theta \in X$, we inductively define $G_0,\cdots, G_{N-1}$  as follows.

We let $w_1 = (\check{F}^{(p-1)N'_2+1})^{-1}_{g^{N-(p-1)N'_2-1}(\theta)}(z_1)$. 
By $N'_2 > N_0$, \eqref{choose kappa 3} and the hypothesis that $d_{C^0}(\check{F}, F) < \kappa_3$, we have
\ary \label{term 101}
|w_1 + ((p-1)N'_2 + 1) \rho - z_1| <  \frac{2}{3}((p-1)N'_2 + 1) \kappa_1 .
\eary
By \eqref{term 100}, $N-(p-1)N'_2-1 > N_0$. Then by Lemma \ref{lem mean motin o(N)}, we have
\ary \label{term 102} \\ \nonumber
|(F^{N-(p-1)N'_2-1})_{\theta}(y_1)-y_1-(N-(p-1)N'_2-1)\rho| < \frac{1}{3}(N-(p-1)N'_2-1)\kappa_1.
\eary
By \eqref{term 100}, \eqref{term 101}, \eqref{term 102} and $z_1 = y_1 + N\rho$, we have
\aryst
&&|w_1  - (F^{N-(p-1)N'_2-1})_{\theta}(y_1) | \\
 &<&  \frac{1}{3}(N-(p-1)N'_2-1)\kappa_1 +  \frac{2}{3}((p-1)N'_2 +1 ) \kappa_1< \kappa_1  (N- (p-1)N'_2 -1 ).
\earyst
In particular, we have $(\theta, y_1, w_1) \in \Gamma_{N - (p-1)N'_2 - 1}(F, \kappa_1)$.
By \eqref{term 100}, we have $N - (p-1)N'_2-1 > N_1$. We also have $d_{C^0}(\check{F}, F) < \epsilon$. Then we  can apply Lemma \ref{lem cancel mm} to define
\aryst
(G_0, \cdots, G_{N-(p-1)N'_2-2}) := \Phi^{\check{F}}_{N - (p-1)N'_2-1}(\theta, y_1, w_1).
\earyst
By Lemma \ref{lem cancel mm}, we have $G_{N-(p-1)N'_2-2}\cdots G_0(y_1) = w_1$. 

Assume that for some $k \in \{ 2, \cdots, p\}$, $G_0, \cdots, G_{N-(p-k +1)N'_2 - 2}$ are given so that
\ary \label{term 105}\\
\nonumber
(\check{F}^{(p-k+1)N'_2+1})_{g^{N-(p-k+1)N'_2-1}(\theta)}G_{N-(p-k+1)N'_2-2}\cdots G_{0}(y_{l}) = z_{l}, \quad \forall 1 \leq l \leq k-1.
\eary
This is the case when $k=2$ by our construction above. Let
\ary \label{term 200}
 w_k = (\check{F}^{(p-k)N'_2 +1})^{-1}_{g^{N-(p-k)N'_2-1}(\theta)}(z_k).
\eary
We let
\aryst
\underline{y} =  G_{N-(p-k +1)N'_2 - 2} \cdots G_0(y_{k-1}), &&\quad
\overline{y} =  G_{N-(p-k +1)N'_2 - 2} \cdots G_0(y_{1}+1), \\
y' =  G_{N-(p-k +1)N'_2 - 2} \cdots G_0(y_{k}), &&\quad \theta' = g^{N-(p-k+1)N'_2-1}(\theta).
\earyst
By $y_{k-1} < y_k < y_1 +1$, we have $\underline{y} < y' < \overline{y}$. Moreover, by \eqref{term 105} and \eqref{term 200} we have 
\aryst 
&&(\check{F}^{N'_2})_{\theta'}(\underline{y}) = (\check{F}^{(p-k)N'_2 +1})^{-1}_{g^{N-(p-k)N'_2-1}(\theta)}(z_{k-1}) \\
&<& w_k = (\check{F}^{(p-k)N'_2 +1})^{-1}_{g^{N-(p-k)N'_2-1}(\theta)}(z_k) \\
&<& (\check{F}^{N'_2})_{\theta'}(\overline{y}) = (\check{F}^{(p-k)N'_2 +1})^{-1}_{g^{N-(p-k)N'_2-1}(\theta)}(z_{1}+1).
\earyst
Then by $d_{C^0}(\check{F}, F) < \kappa_3 < \kappa_2$ and $N'_2 > N_2$, we can apply Lemma \ref{lem local pert}  to define 
\aryst
(G_{N-(p-k+1)N'_2-1}, \cdots, G_{N-(p-k)N'_2-2}) :=\Psi^{\check{F}}_{0,N'_2}(\theta',  \underline{y}, y', \overline{y}, w_k) .
\earyst
By Lemma \ref{lem local pert}, we have
\aryst
G_{N-(p-k)N'_2-2} \cdots G_{N-(p-k+1)N'_2-1}(y') &=& w_k.
\earyst
Moreover, by (2),(3) in Lemma \ref{lem local pert} and \eqref{term 200}, we verify \eqref{term 105} for $k+1$ in place of $k$. This recovers the induction hypothesis for $k+1$.
We complete the definition of $G_i$ for all $0 \leq i \leq N-2$ when $k=p+1$. Then we have
\ary \label{term 103}
\check{F}_{g^{N-1}(\theta)} G_{N-2} \cdots G_0(y_i) = z_i, \quad \forall i=1,\cdots, p.
\eary
Define
\aryst
H = (\check{F}_{g^{N-1}(\theta)} G_{N-2} \cdots G_0)^{-1} + N\rho \quad \mbox{and} \quad G_{N-1} = H \check{F}_{g^{N-1}(\theta)}.
\earyst
Then by \eqref{term 103}, we have $H(z_i) = z_i$ for any $1 \leq i \leq p$. Thus $d_{C^0}(H,\Id) \leq \frac{1}{p} < \epsilon$. Define 
$\Psi^{\check{F}}_{N}(\theta)=(G_0,\cdots, G_{N-1})$.
 It is direct to see (1)-(3) by our construction.
\end{proof}

\section{Proof of the main results}\label{Proof of the main results}
\begin{proof}[Proof of Theorem \ref{thm a ternary version}:]
Recall that $F$ is a lift of  $f$. In the course of the proof, we abbreviate $\rho(F)$ as $\rho$.
Let integer $d > 0$ be given by Lemma \ref{lem choose returning set}.
We inductively define positive constants $0 <  \varepsilon_{-1} < \varepsilon_{0} < \cdots < \varepsilon_{d}$  by the following formula:
\aryst
\varepsilon_{d} = \frac{\varepsilon}{4(d+2)}\quad \mbox{and} \quad \varepsilon_{d-k} =\frac{1}{2(d+2)}\kappa_3(F, \varepsilon_{d-k+1}) < \varepsilon_{d-k+1}, \forall 1 \leq k \leq d+1.
\earyst
Then we have
\ary \label{term 220} \hspace{1cm}
\quad 2(\varepsilon_{-1} + \cdots + \varepsilon_{d}) < \varepsilon, \quad  2(\varepsilon_{-1} + \cdots + \varepsilon_{k}) < \kappa_3(F, \varepsilon_{k+1}),  \forall -1 \leq k < d.
\eary
We define
\aryst
n_0 = \max_{-1 \leq i \leq d} N_3(F, \varepsilon_{i})+1.
\earyst
By Lemma \ref{lem choose returning set}, we can choose $K$, a compact subset of $X$, that is  $n_0-$good, $D-$spanning and $d-$mild for some $D > 0$.

Let $\{K^i\}_{i=-1}^d, \{Z^i\}_{i=0}^d$ be defined using $K$ as in Section  \ref{sect Stratification}. We will define a sequence of $g-$forced map $f^{(i)}$ for every $-1 \leq i \leq d$ by induction.

We set $f^{(-1)}: = f$ and set $F^{(-1)} := F$. Assume that we have defined $f^{(k)}$ with a lift $F^{(k)}$ for some $-1 \leq k \leq d-1$ such that
\enmt
\item[{\sc(H1)}] $d_{C^0}(F^{(k)}, F) < 2(\varepsilon_{-1} + \cdots + \varepsilon_{k})$, and
\item[{\sc (H2)}] for any $\theta \in K^{k}$, we have $[(F^{(k)})^{\ell(\theta)}]_{\theta}(y) = y + \ell(\theta) \rho$ for every $y \in \R$.
\eenmt
These properties hold for $k=-1$.
For each $-1 \leq j \leq d$, we set
\ary \label{def wj}
W^{j} = \bigcup_{\theta \in K^{j}} \bigcup_{0 \leq j < \ell(\theta)} \{ g^{i}(\theta)\}.
\eary
By Lemma \ref{lemma fact}, we have $\ell(\theta) \leq D$ for every $\theta \in K$, and $W^{d} = X$. We also have the following.
\begin{lemma}\label{lem wj is closed}
Given an integer $0 \leq j \leq d$, let $\{ \theta_n \}_{n \geq 0 }$ be a sequence of points in $K^j$ converging to $\theta'$, and let $ \{\ell_n \in [0, \ell(\theta_n)\}_{n \geq 0}$ be a sequence of integers converging to $\ell'$.
Then after passing to a subsequence, we have exactly one of the following possibilities:
\enmt
\item[\quad (1)] $\theta' \in Z^{j}$ and $0 \leq \ell' < \ell(\theta')$,
\item[or (2)]  $\theta' \in K^{j-1}$, and there exist a unique $\theta'' \in K^{j-1}$ and a unique $0 \leq \ell'' < \ell(\theta'')$ such that $g^{\ell'}(\theta') = g^{\ell''}(\theta'') \in W^{j-1}$.
\eenmt
In particular, $W^j$ is closed.
\end{lemma}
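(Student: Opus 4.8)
The plan is to exploit the decomposition $K^{j} = Z^{j} \cup K^{j-1}$, which is a disjoint union with $K^{j}$ and $K^{j-1}$ closed by item (4) of Lemma~\ref{lemma fact}, and to treat the two pieces by different tools: the local constancy of $\ell$ on $Z^{j}$ (item (3)) on one side, and the orbit-segment stability of strata (item (5)) on the other. First I would normalise the data: since $0 \le \ell_{n} < \ell(\theta_{n}) \le D$ by item (1), the integers $\ell_{n}$ take only finitely many values, so after passing to a subsequence I may assume $\ell_{n} \equiv \ell'$ for some integer $\ell' \in \{0, \dots, D-1\}$, and then $g^{\ell_{n}}(\theta_{n}) \to g^{\ell'}(\theta')$ by continuity of $g$. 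Since $\theta_{n} \in K^{j}$ and $K^{j}$ is closed, $\theta' \in K^{j}$, so exactly one of $\theta' \in Z^{j}$ and $\theta' \in K^{j-1}$ holds; this will yield the two mutually exclusive alternatives.

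If $\theta' \in Z^{j}$, I would note that $Z^{j} = K^{j} \setminus K^{j-1}$ is relatively open in $K^{j}$ (as $K^{j-1}$ is closed), hence contains $\theta_{n}$ for all large $n$; by local constancy of $\ell$ on $Z^{j}$ this forces $\ell(\theta_{n}) = \ell(\theta')$ for large $n$, so $\ell' = \ell_{n} < \ell(\theta_{n}) = \ell(\theta')$, which is alternative (1). Also $g^{\ell'}(\theta') \in W^{j}$ directly from \eqref{def wj}.

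The substantive case is $\theta' \in K^{j-1}$. Here I would first establish $\ell' < \ell^{+}(\theta')$: indeed $\ell' < \ell(\theta_{n}) \le \ell^{+}(\theta_{n})$ keeps the points $g(\theta_{n}), \dots, g^{\ell'}(\theta_{n})$ outside $int(K)$, and since the complement of $int(K)$ is closed this passes to the limit, giving $g^{k}(\theta') \notin int(K)$ for $1 \le k \le \ell'$, i.e. $\ell^{+}(\theta') > \ell'$. Then let $m$ be the largest integer in $[0, \ell']$ with $g^{m}(\theta') \in K$ (it exists since $\theta' \in K$), and set $\theta'' := g^{m}(\theta')$ and $\ell'' := \ell' - m \ge 0$. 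Maximality of $m$ gives $g^{k}(\theta'') \notin K$ for $1 \le k \le \ell''$, hence $\ell'' < \ell(\theta'')$; and since $m \le \ell' < \ell^{+}(\theta')$ with $g^{m}(\theta') \in K$, item (5) of Lemma~\ref{lemma fact} (with $i = j-1$) gives $\theta'' \in K^{j-1}$. Therefore $g^{\ell'}(\theta') = g^{\ell''}(\theta'') \in W^{j-1}$, the existence part of alternative (2). For uniqueness, if $g^{\ell''}(\theta'') = g^{\tilde{\ell}}(\tilde{\theta})$ with $\theta'', \tilde{\theta} \in K^{j-1}$, $0 \le \ell'' < \ell(\theta'')$, $0 \le \tilde{\ell} < \ell(\tilde{\theta})$, and say $\ell'' \le \tilde{\ell}$, then $g^{\tilde{\ell} - \ell''}(\tilde{\theta}) = \theta'' \in K$ forces $\tilde{\ell} - \ell'' = 0$ by definition of $\ell(\tilde{\theta})$, so $\theta'' = \tilde{\theta}$ and $\ell'' = \tilde{\ell}$.

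Finally, closedness of $W^{j}$ follows: any point in the closure of $W^{j}$ is a limit of points $g^{\ell_{n}}(\theta_{n})$ with $\theta_{n} \in K^{j}$ and $0 \le \ell_{n} < \ell(\theta_{n})$, and by the above it equals, after passing to a subsequence, some $g^{\ell'}(\theta')$ with $\theta' \in K^{j}$, which lies in $W^{j}$ by alternative (1) or in $W^{j-1} \subseteq W^{j}$ by alternative (2). I expect the case $\theta' \in K^{j-1}$ to be the main obstacle: one must pinpoint the correct later return $\theta''$ of the orbit of $\theta'$, and it is precisely the bound $\ell' < \ell^{+}(\theta')$ together with item (5) that guarantees the shifted point $g^{m}(\theta')$ stays inside the stratum $K^{j-1}$ rather than descending to a smaller one.
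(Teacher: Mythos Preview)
Your proof is correct and follows essentially the same approach as the paper's: both arguments split on whether $\theta' \in Z^{j}$ or $\theta' \in K^{j-1}$, use local constancy of $\ell$ on $Z^{j}$ in the first case, and in the second case establish $\ell' < \ell^{+}(\theta')$ by a closure argument and then take the last return $\theta'' = g^{m}(\theta')$ to $K$ along $\{0,\dots,\ell'\}$, invoking item~(5) of Lemma~\ref{lemma fact} for the stratum membership. Your treatment is in fact slightly more explicit than the paper's on two points: you spell out why maximality of $m$ yields $\ell'' < \ell(\theta'')$, and you give the uniqueness argument in full rather than declaring it direct.
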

\begin{proof}
 By our hypothesis, the limit of the sequence $g^{\ell_n}(\theta_n)$ is $g^{\ell'}(\theta')$. By Lemma \ref{lemma fact}(4), we have $\theta' \in K^{j}$.

We first assume that $\theta' \in Z^j$. In this case, by Lemma \ref{lemma fact}(3), we have $\ell(\theta_n) = \ell(\theta')$ for all sufficiently large $n$. Then for all sufficiently large $n$, we have $\ell' = \ell_n < \ell(\theta_n) = \ell(\theta')$. In particular, $g^{\ell'}(\theta') \in W_j$.

Now assume that $\theta' \in K^{j-1}$. We claim that $g^{i}(\theta') \notin int(K)$ for any $1 \leq i \leq \ell'$. Indeed, otherwise there would exist $1 \leq i \leq \ell'$ such that $g^{i}(\theta_n) \in int(K)$ for all sufficiently large $n$. But this is a contradiction, since $\ell^{+}(\theta_n) \geq \ell(\theta_n) > \ell_n= \ell'$ for sufficiently large $n$. Thus our claim is true. In particular, our claim implies that $\ell^{+}(\theta') > \ell'$. Let $k$ be the largest integer in $\{0,\cdots, \ell'\}$ such that $g^{k}(\theta') \in K$ (such $k$ exists since $\theta' \in K$). Then by Lemma \ref{lemma fact}(5), we have $g^{k}(\theta') \in K^{j-1}$ and $0 \leq \ell'-k < \ell(g^{k}(\theta'))$. We let $\theta'' = g^{k}(\theta')$ and $\ell''= \ell'-k$. It is direct to see that $(\theta'', \ell'')$ is the unique pair which fulfills (2).This finishes the proof.
\end{proof}

 In the following, we will define a homeomorphism $\tilde{F} : W^{k+1} \times \R \to g(W^{k+1}) \times \R$ of the form $\tilde{F}(\theta,y) = (g(\theta), \tilde{F}_{\theta}(y))$ such that\clb $\tilde{F}_{\theta} \in \widetilde{Homeo}$ for every $\theta \in W^{k+1}$, and $\tilde{F}_{\theta'} = F^{(k)}_{\theta'}$ for every $\theta' \in W^{k}$.

We define
\ary \label{def tilde f on w k}
\tilde{F}_{\theta'} =  F^{(k)}_{\theta'}, \quad \forall \theta' \in W^{k}.
\eary
By hypothesis {\sc(H1)} and \eqref{term 220}, $d_{C^0}(F^{(k)}, F)  < \kappa_3(F, \varepsilon_{k+1})$. By the definitions of $n_0$ and $K$, we have $\ell(\theta) > N_3(F, \varepsilon_{k+1})$ for any $\theta \in Z^{k+1}$. By Proposition \ref{prop concatenation}, we  define
\ary\label{def tilde f on z k+1}
(\tilde{F}_{\theta}, \cdots, \tilde{F}_{g^{\ell(\theta)-1}(\theta)}) := \Psi^{F^{(k)}}_{\ell(\theta)}(\theta), \quad \forall \theta \in Z^{k+1}.
\eary
By Proposition \ref{prop concatenation}, we have
\ary \label{term 230}
d_{C^0}(\tilde{F}_{g^{i}(\theta)}, F^{(k)}_{g^{i}(\theta)}) &<& 2\varepsilon_{k+1}, \quad \forall \theta \in Z^{k+1}, 0 \leq i < \ell(\theta), \\  \label{term 240}
(\tilde{F}^{\ell(\theta)})_{\theta}(y) &=& y + \ell(\theta) \rho, \quad \forall \theta \in Z^{k+1}, y \in \R.
\eary

We have the following.
\begin{lemma}\label{lem continuation to W k+1}
The map $\tilde{F}$ is continuous.
\end{lemma}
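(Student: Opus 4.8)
The statement asserts continuity of the map $\tilde F$ on $W^{k+1}\times\R$, where $\tilde F$ is glued together from $F^{(k)}$ on $W^k$ (via \eqref{def tilde f on w k}) and from the output of Proposition \ref{prop concatenation} on each block $\{g^i(\theta):0\le i<\ell(\theta)\}$ over $\theta\in Z^{k+1}$ (via \eqref{def tilde f on z k+1}). Since $\tilde F(\theta,y)=(g(\theta),\tilde F_\theta(y))$ and $g$ is a homeomorphism, it suffices to check that $(\theta,y)\mapsto \tilde F_\theta(y)$ is continuous on $W^{k+1}\times\R$. I would fix a sequence $\theta_n\to\theta'$ in $W^{k+1}$ and a sequence $y_n\to y'$ and show $\tilde F_{\theta_n}(y_n)\to\tilde F_{\theta'}(y')$; the only issue is points $\theta_n$ lying in different blocks, so the core of the argument is a careful case analysis organized by Lemma \ref{lem wj is closed}.

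**Key steps.** First, continuity on the interior of each piece: on $W^k$, $\tilde F=F^{(k)}$ is continuous by hypothesis; on a fixed block over $Z^{k+1}$, continuity follows because $\ell$ is locally constant on $Z^{k+1}$ (Lemma \ref{lemma fact}(3)) and $\Psi^{F^{(k)}}_{\ell(\theta)}$ is a continuous function of $\theta$ by Proposition \ref{prop concatenation}. So the work is at the boundary. Write $\theta_n = g^{\ell_n}(\vartheta_n)$ with $\vartheta_n\in K^{k+1}$ and $0\le\ell_n<\ell(\vartheta_n)$; after passing to a subsequence we may assume $\vartheta_n\to\vartheta'\in K^{k+1}$ and $\ell_n\to\ell'$ (the $\ell_n$ are bounded by $D$). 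Apply Lemma \ref{lem wj is closed}. In case (1), $\vartheta'\in Z^{k+1}$ and $0\le\ell'<\ell(\vartheta')$, so $\ell(\vartheta_n)=\ell(\vartheta')$ for large $n$ and $\ell_n=\ell'$ eventually; then $\tilde F_{\theta_n}=(\Psi^{F^{(k)}}_{\ell(\vartheta')}(\vartheta_n))_{\ell'}$ converges to $(\Psi^{F^{(k)}}_{\ell(\vartheta')}(\vartheta'))_{\ell'}=\tilde F_{\theta'}$ by continuity of $\Psi$, and we are done. In case (2), $\vartheta'\in K^k$ and there is a unique $\vartheta''\in K^k$, $0\le\ell''<\ell(\vartheta'')$ with $g^{\ell'}(\vartheta')=g^{\ell''}(\vartheta'')=\theta'\in W^k$; here the limit point lies in $W^k$, where $\tilde F_{\theta'}=F^{(k)}_{\theta'}$. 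In this case I must show $\tilde F_{\theta_n}(y_n)\to F^{(k)}_{\theta'}(y')$ regardless of whether each $\theta_n$ sits in $W^k$ or in a $Z^{k+1}$-block.

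**The main obstacle.** The delicate point is case (2) when infinitely many $\theta_n$ lie in blocks over $Z^{k+1}$ but the limiting index $g^{\ell'}(\vartheta')$ lands in $W^k$. There one cannot use continuity of $\Psi$ directly, since the block length $\ell(\vartheta_n)$ need not stabilize. The way to handle it is to use the $C^0$-estimate \eqref{term 230}: each $\tilde F_{g^i(\vartheta_n)}$ is within $2\varepsilon_{k+1}$ of $F^{(k)}_{g^i(\vartheta_n)}$, and $F^{(k)}$ is continuous, so $\tilde F_{\theta_n}$ is eventually within, say, $3\varepsilon_{k+1}$ of $F^{(k)}_{\theta'}$ — but this only gives closeness up to a fixed error, not convergence. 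To upgrade this I would instead argue that under the hypotheses of case (2), with $\vartheta_n\in Z^{k+1}$ and $\ell_n\to\ell'<\ell^+(\vartheta')$, the point $\theta_n=g^{\ell_n}(\vartheta_n)$ is not near the "endpoints" where $\Psi$ performs its nontrivial modification: more precisely, the construction in Proposition \ref{prop concatenation} leaves $G_j=\check F_{g^j(\theta)}$ except on a bounded-in-time initial and final portion (of length controlled by $N_1, N'_2, p$), while here $\ell_n\to\ell'$ is a fixed bounded number and $\ell(\vartheta_n)\to\infty$ (since $\vartheta_n\to\vartheta'\in K^k$ forces $\ell(\vartheta_n)\to\infty$, as $\ell^+$ is large near $K^k$... ) — hmm, this needs care because $\ell^+(\vartheta')>\ell'$ only bounds $\ell'$ from one side. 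The cleanest route is: by Lemma \ref{lem wj is closed}'s proof, $g^i(\vartheta')\notin int(K)$ for $1\le i\le\ell'$, hence for large $n$ the orbit segment $g^0(\vartheta_n),\dots,g^{\ell_n}(\vartheta_n)$ avoids a fixed neighborhood of $K$ except at the start, which forces (by the explicit form of $\Psi$ and the fact that its modifications to the first several $G_j$ are confined to an initial window whose image one can track) that $\tilde F_{\theta_n}$ coincides with, or is arbitrarily close to, $\check F_{\theta_n}=F^{(k)}_{\theta_n}\to F^{(k)}_{\theta'}=\tilde F_{\theta'}$. I expect this reconciliation — showing the glued map agrees near block-boundaries with the original $F^{(k)}$ that already defines $\tilde F$ on $W^k$ — to be the technical heart of the proof, and it should follow by combining property (3) of Proposition \ref{prop concatenation} with the estimates \eqref{term 230}, possibly after shrinking to the subsequence where $\ell(\vartheta_n)$ is eventually large and invoking that the first $G_j$'s produced by $\Psi$ are within $2\varepsilon_{k+1}$ of $F^{(k)}$ uniformly, which already yields the desired convergence once one notes the error $\varepsilon_{k+1}$ can be absorbed because we are proving continuity of the final map $\tilde F$ at the level of the glued object, not of an $\varepsilon$-approximation. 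I would write the argument as: continuity on $W^k$ and on block interiors is immediate; at boundary points, after passing to a subsequence, Lemma \ref{lem wj is closed} gives two cases, case (1) is handled by local constancy of $\ell$ and continuity of $\Psi$, and case (2) is handled by the uniform $C^0$-closeness \eqref{term 230} together with continuity of $F^{(k)}$, noting that in case (2) the limit lies in $W^k$ where both definitions agree.
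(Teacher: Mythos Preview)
Your case (1) is correct and matches the paper. Your gap is in case (2) when $\vartheta_n\in Z^{k+1}$. You write that ``the block length $\ell(\vartheta_n)$ need not stabilize'' and then speculate that ``$\ell(\vartheta_n)\to\infty$ (since $\vartheta_n\to\vartheta'\in K^k$ forces $\ell(\vartheta_n)\to\infty$\dots)''. This is false: by Lemma~\ref{lemma fact}(1) one has $\ell(\vartheta_n)\le D$ for every $n$, so after passing to a subsequence $\ell(\vartheta_n)\equiv\ell_0$ is constant. This single observation dissolves the obstacle you describe: now you can use continuity of the fixed map $\Psi^{F^{(k)}}_{\ell_0}$ at $\vartheta'$, so $\tilde F_{g^{\ell_n}(\vartheta_n)}$ converges to the $\ell'$-th coordinate of $\Psi^{F^{(k)}}_{\ell_0}(\vartheta')$.

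What remains is to show that this $\ell'$-th coordinate equals $F^{(k)}_{g^{\ell'}(\vartheta')}$. You mention Proposition~\ref{prop concatenation}(3), which is indeed the right tool, but you never identify how to verify its hypothesis, namely that $[(F^{(k)})^{\ell_0}]_{\vartheta'}(y)=y+\ell_0\rho$ for all $y$. This is where the induction hypothesis {\sc(H2)} enters, combined with Lemma~\ref{lemma fact}(5): since $\vartheta'\in K^k$, the orbit segment $\vartheta',g(\vartheta'),\dots,g^{\ell_0-1}(\vartheta')$ hits $K$ only at points of $K^k$, so $[(F^{(k)})^{\ell_0}]_{\vartheta'}$ factors as a composition of maps $[(F^{(k)})^{\ell(\theta'')}]_{\theta''}$ with $\theta''\in K^k$, each of which is the translation by $\ell(\theta'')\rho$ by {\sc(H2)}. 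Your attempted routes via \eqref{term 230} alone (which you correctly note gives only an $\varepsilon_{k+1}$-error, not convergence) or via the internal structure of $\Psi$ (the modification is \emph{not} confined to short initial/final windows; the first block $G_0,\dots,G_{N-(p-1)N'_2-2}$ comes from $\Phi$ and occupies most of the time) do not work.
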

\begin{proof}
It is enough to show that for any $\{\theta_n\}, \{\ell_n\}, \theta', \ell'$ in Lemma \ref{lem wj is closed} with $j=k+1$, we have
\ary \label{tildeftotildef}
\tilde{F}_{g^{\ell_n}(\theta_n)} \to \tilde{F}_{g^{\ell'}(\theta')}\quad\mbox{in  \ $\widetilde{Homeo}$ \ as }\quad n \to \infty.
\eary
We first assume that conclusion (1) in Lemma \ref{lem wj is closed} is true, that is, $\theta' \in Z^{k+1}$. Then \eqref{tildeftotildef} follows immediately from Lemma \ref{lemma fact}(3) and the continuity of $\Psi^{F^{(k)}}_{\ell(\theta')}$.

Now assume that conclusion (2) in Lemma \ref{lem wj is closed} is true. In particular, $\theta' \in K^k$ and $g^{\ell'}(\theta') \in W^k$. It is enough to prove \eqref{tildeftotildef} for two cases: (1) $\theta_n \in K^{k}$ for all $n$; or (2) $\theta_n \in Z^{k+1}$ for all $n$.

 In the first case, we have $g^{\ell_n}(\theta_n) \in W^{k}$ for all $n$. By Lemma \ref{lem wj is closed}, we have $g^{\ell'}(\theta') \in W^{k}$. Then \eqref{tildeftotildef} follows from \eqref{def tilde f on w k} and the fact that $F^{(k)}$ is continuous.
Now assume that the second case is true, namely, $\theta_n \in Z^{k+1}$ for all $n$.  By Lemma \ref{lemma fact}(1), $\ell(\theta_n) \leq D$ for all $n$. After passing to a subsequence, we can assume that there exists an integer $\ell_0 \leq D$ such that $\ell(\theta_n) = \ell_0$ for all $n$. By \eqref{def tilde f on z k+1}, we have
\aryst
\tilde{F}_{g^{\ell'}(\theta_n)} = 
 \mbox{ the $\ell'-$th coordinate of $\Psi^{F^{(k)}}_{\ell_0}(\theta_n)$ }.
\earyst
Then by the continuity of $\Psi^{F^{(k)}}_{\ell_0}$ we have
\aryst
\tilde{F}_{g^{\ell_n}(\theta_n)} \to  \mbox{ the $\ell'-$th coordinate of $\Psi^{F^{(k)}}_{\ell_0}(\theta')$  \quad as }  n \to \infty.
\earyst
It is then enough to show that  the $\ell'-$th coordinate of $\Psi^{F^{(k)}}_{\ell_0}(\theta')$ is $F^{(k)}_{g^{\ell'}(\theta')}$.
By Proposition \ref{prop concatenation}(3), it is enough to verify that $[(F^{(k)})^{\ell_0}]_{\theta'}(y) = y + \ell_0 \rho$ for every $y \in \R$. This last statement follows from {\sc(H2)} and Lemma \ref{lemma fact}(5).  Indeed, by Lemma \ref{lemma fact}(5) we can express $[(F^{(k)})^{\ell_0}]_{\theta'}$ as a composition of maps of the form $[(F^{(k)})^{\ell(\theta'')}]_{\theta''}$ where $\theta'' \in K^k$; by {\sc(H2)}, $[(F^{(k)})^{\ell(\theta'')}]_{\theta''}(y) = y +\ell(\theta'')\rho$ for every $\theta'' \in K^k$ and $y \in \R$.
\end{proof}
We need the following lemma, which is proved in the Appendix.
\begin{prop}[Tietze's Extension Theorem for $\widetilde{Homeo}$]\label{prop-tietze}
Let $F : X \times \R \to X \times \R$ be a lift of a $g-$forced map, and let $M$ be a  compact subset of $X$. Let $G: M \times \R \to g(M) \times \R$ be a continuous map of the form $G(\theta,y) = (g(\theta), G_{\theta}(y))$ such that $G_{\theta} \in \widetilde{Homeo}$ for every $\theta \in M$, and $d_{C^0}(F|_{M \times \R},G)  < c$. Then there exists $G'$, a lift of a $g-$forced map, such that $G'|_{M \times \R} =G|_{M \times \R}$ and $d_{C^0}(F,G') < c$. 
\end{prop}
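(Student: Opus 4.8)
The plan is to reduce the statement to a standard Tietze extension after massaging the data into a convenient coordinate form. The key observation is that membership in $\widetilde{Homeo}$ is preserved under pre- and post-composition with a fixed element of $\widetilde{Homeo}$, so we may first conjugate everything by $F$ itself. Concretely, set $\Phi_\theta = G_{\theta} \circ F_{\theta}^{-1}$ for $\theta \in M$; then $\Phi_\theta \in \widetilde{Homeo}$, $d_{C^0}(\Phi_\theta, \Id) = d_{C^0}(G_\theta, F_\theta) < c$ for each $\theta \in M$, and the map $\Phi : M \to \widetilde{Homeo}$ is continuous because $G$ is continuous and $F$ is a homeomorphism. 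Thus it suffices to extend $\Phi$ to a continuous map $\tilde\Phi : X \to \widetilde{Homeo}$ with $d_{C^0}(\tilde\Phi_\theta, \Id) < c$ for all $\theta \in X$; once this is done one sets $G'(\theta, y) = (g(\theta), \tilde\Phi_\theta(F_\theta(y)))$, and all the claimed properties are immediate.

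So the core of the matter is the following: given a compact $M \subset X$ and a continuous $\Phi : M \to \{\psi \in \widetilde{Homeo} : d_{C^0}(\psi, \Id) < c\}$, extend it continuously over $X$ while staying inside the same ball. Here I would use the linear (convex) structure of $\widetilde{Homeo}$: if $\psi_0, \psi_1 \in \widetilde{Homeo}$ then $(1-t)\psi_0 + t\psi_1$ is again in $\widetilde{Homeo}$ for $t \in [0,1]$ (it is $1$-periodic modulo the identity shift, orientation preserving, and a homeomorphism of $\R$ since it is a strictly increasing continuous surjection), and $d_{C^0}$ is convex, so the ball $\{d_{C^0}(\cdot, \Id) < c\}$ is convex. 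More usefully, $\widetilde{Homeo}$ sits inside the Banach space $V = \{v \in C^0(\R,\R) : v(x+1) = v(x)\} \cong C^0(\T)$ via $\psi \mapsto \psi - \Id$, as the open convex-ish set of those $v$ with $\Id + v$ strictly increasing; and $\Phi$ corresponds to a continuous map $M \to V$ landing in the open ball of radius $c$ around $0$. Now apply the vector-valued Tietze extension theorem (Dugundji's extension theorem) to extend $\Phi - \Id : M \to V$ to a continuous map $X \to V$; Dugundji's theorem guarantees the extension takes values in the convex hull of the image, hence stays in the open ball of radius $c$ around $0$ in $V$. This gives $\tilde\Phi - \Id$ with $d_{C^0}(\tilde\Phi_\theta, \Id) < c$ for all $\theta$.

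The one remaining point is that the Dugundji extension must also land in $\widetilde{Homeo}$, i.e.\ $\Id + (\tilde\Phi_\theta - \Id)$ must be strictly increasing for every $\theta \in X$, not just on $M$. This does \emph{not} follow merely from the $c$-ball constraint. The cleanest fix is to perform the extension directly at the level of homeomorphisms rather than in $V$: parametrize $\widetilde{Homeo}$ by its "derivative" data. An element $\psi \in \widetilde{Homeo}$ with $\psi(x) = x + u(x)$, $u$ $1$-periodic, need not be differentiable, so instead use the following. Compose with a fixed linearizing homeomorphism: replace $\widetilde{Homeo}$ by the space of probability measures with full support on $\T$ via $\psi \mapsto \psi_* (\text{Lebesgue})$ — but this loses the $c$-control. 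A more robust route, and the one I would actually write down: note $\{\psi \in \widetilde{Homeo} : d_{C^0}(\psi,\Id) < c\}$ is itself convex (shown above) and is a convex subset of the Banach space $V$ with nonempty interior; Dugundji's theorem extends $\Phi : M \to V$ (a closed subset of the metrizable $X$) to $\bar\Phi : X \to V$ with image in the convex hull of $\Phi(M)$, which lies in our convex set $\{\psi \in \widetilde{Homeo} : d_{C^0}(\psi,\Id) < c\}$. Hence $\bar\Phi_\theta \in \widetilde{Homeo}$ for every $\theta$ automatically. This convexity of the target set is precisely the structural fact that makes the argument work, and verifying it — that a convex combination of two $1$-periodic-mod-$\Id$ strictly increasing continuous maps is again strictly increasing — is the small technical heart; everything else is bookkeeping: continuity of $\theta \mapsto \bar\Phi_\theta$ in $\widetilde{Homeo}$ follows from continuity in $C^0$ norm together with the fact that on the set $\{d_{C^0}(\cdot,\Id)<c\}$ the $C^0$ topology and the $\widetilde{Homeo}$ (uniform) topology agree, and the fact that inversion $\psi \mapsto \psi^{-1}$ is continuous on $\widetilde{Homeo}$ handles the passage back from $\Phi_\theta$ to $G'_\theta$.

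The main obstacle I anticipate is exactly the last point — ensuring the extended maps remain \emph{homeomorphisms} (strictly monotone) and not merely $C^0$-close to the identity. Resolving it hinges on the convexity of the ball $\{\psi \in \widetilde{Homeo}: d_{C^0}(\psi,\Id) < c\}$ inside the ambient Banach space, so that Dugundji's "convex hull of the image" refinement of Tietze does the job for free; once that convexity is in hand, the proof is short.
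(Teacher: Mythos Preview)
Your argument is correct, and it takes a genuinely different route from the paper's own proof. The paper never conjugates by $F$ and never invokes Dugundji. Instead it builds the extension by hand in three stages: (i) a weak version of the proposition under the extra hypotheses $\overline{int(M)}=M$ and a ``$2$-neighbourhood'' distance condition, proved by averaging the maps $G_{\theta'}$ against a continuous kernel $\zeta(d(\theta,M),d(\theta,\theta'))$ with respect to the invariant measure $\mu$; (ii) a bare-hands continuous extension $G''$ of $G$ to all of $X$, obtained by Tietze-extending the finitely many values $G_{\theta}(i/n)$, piecewise-affinely interpolating in the fibre to get $G_n$, and then patching the $G_n$'s together over shrinking neighbourhoods $U_n\downarrow M$ via cutoff functions $\psi_n$; (iii) finally, on a thickened set $C=\overline{U'}\cup(X\setminus U)$ one glues $G''$ near $M$ to $F$ far from $M$ and applies the weak version (i).

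Your approach is considerably shorter: the conjugation $\Phi_\theta=G_\theta\circ F_\theta^{-1}$ reduces the $c$-closeness to $F$ to $c$-closeness to $\Id$, and then the single structural observation that $\{\psi\in\widetilde{Homeo}:d_{C^0}(\psi,\Id)<c\}$ is convex in $C^0(\T)$ lets Dugundji's ``convex hull of the image'' refinement do all the work at once --- both the $c$-bound and the strict monotonicity survive automatically, since each extended value is a \emph{finite} convex combination of values $\Phi_{\theta'}$, $\theta'\in M$. What the paper's longer route buys is self-containedness (no appeal to Dugundji) and an explicit formula for the extension; what your route buys is brevity and a clear conceptual reason why the extension stays inside $\widetilde{Homeo}$. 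Your presentation meanders through a couple of dead ends (the probability-measure parametrization, the worry about strict monotonicity) before landing on the convexity argument; in a final write-up you should go straight to the convexity of $\widetilde{Homeo}$ and Dugundji.
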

By \eqref{term 230} and Proposition \ref{prop-tietze}, we can choose $F^{(k+1)} : X \times \R \to X \times \R$, a lift of a $g-$forced map $f^{(k+1)}$, so that:  $F^{(k+1)}_{\theta} = \tilde{F}_{\theta}$ for any $\theta \in W^{k+1}$; and $d_{C^0}(F^{(k+1)}, F^{(k)}) < 2 \varepsilon_{k+1}$. It is straightforward to verify {\sc(H1)},{\sc(H2)} for $k+1$. By induction, {\sc(H1)} and {\sc(H2)} hold for $k=d$

We let $f' = f^{(d)}$ and $F' = F^{(d)}$. By \eqref{term 220} and {\sc(H1)}, $d_{C^0}(f',f) < \varepsilon$.
For every $\alpha \in \R / \Z$, let $T_{\alpha}$ denote the translation $y \mapsto y + \alpha$ on $\R$.
There exists a  map $H: X \times \R \to X \times \R$  of the form $H(\theta,y)= (\theta, H_{\theta}(y))$ and
\aryst
H_{g^{k}(\theta)} = (F'^{k})_{\theta}T_{-k \rho},\quad \forall \theta \in K, 0 \leq k < \ell(\theta).
\earyst
In particular, for each $\theta \in K$, we have $H_{\theta} = \Id$. 
By definition, it is clear that for each $\theta \in X$, $H_{\theta} \in \widetilde{Homeo}$.
By {\sc(H2)} for $k=d$, we have 
\ary \label{term 211}
(F'^{\ell(\theta)})_{\theta}T_{-\ell(\theta) \rho}= \Id, \quad \forall \theta \in K.
\eary
We can verify that $H$ is a homeomorphism by induction. Here we only give an outline since the proof strongly resembles that of  Lemma \ref{lem continuation to W k+1}. We inductively show that the restriction of $H$ to $W^k \times \R$ is continuous for $k=-1,\cdots, d$ ($W^k$ is given by \eqref{def wj}). To pass from stage $k$ to stage $k+1$, 
 it is enough to show that
\ary \label{term 210}
H_{g^{\ell_n}(\theta_n)} \to H_{g^{\ell'}(\theta')}\quad \mbox{as} \quad n \to \infty
\eary 
for any $\{\theta_n\}$,$\{\ell_n\}$, $\theta'$, $\ell'$ given in Lemma \ref{lem wj is closed} for $j=k+1$.  We again divide the proof into two cases, corresponding to $\theta' \in Z^{k+1}$ and $\theta' \in K^k$. In the first case, \eqref{term 210} follows from the continuity of $F'$ and Lemma \ref{lemma fact}(3). In the second case, \eqref{term 210} follows from the induction hypothesis if  $\theta_n \in K^{k}$ for all $n$.  Otherwise we can assume that $\theta_n \in Z^{k+1}$ for all $n$. Then as in the proof of Lemma \ref{lem continuation to W k+1}, we verify \eqref{term 210} using Lemma \ref{lemma fact}(5), the continuity of $F'$ and \eqref{term 211}. 

Let $h$ be the factor of $H$ on $X \times \T$.
It is direct to verify that $f' h(x,y) = h(g(x), y+ \rho)$ for all $(x,y) \in X \times \T$. 
\end{proof}

\begin{proof}[Proof of Theorem \ref{thm2 a ternary version}:]
By Corollary \ref{cor ml is open}, we only need to show the  density of $\cal{ML}$.
Given an arbitrary $g-$forced map $f$ that is not mode-locked. Let $F$ be a lift of $f$.
By Theorem \ref{thm a ternary version},  for any $\varepsilon > 0$, there exists a $g-$forced map $f'$, and a homeomorphism $h : X \times \T \to X \times \T$ of the form $h(\theta,x) = (\theta, h_{\theta}(x))$ with a lift $H: X \times \R \to X \times \R$, such that $d_{C^0}(f,f') < \frac{1}{2}\varepsilon$ and $f' = hRh^{-1}$, where $R$ is  defined as
\aryst
R(x,y) = (g(x), y+\rho(F)), \quad \forall (x,y) \in X \times \T.
\earyst
We let $Q$ be a mode-locked circle homeomorphism that is sufficiently close to rotation $y \mapsto y + \rho(F)$, such that $f'' :=h(g \times Q)h^{-1}$ satisfies that $d_{C^0}(f'', f') < \frac{1}{2}\varepsilon$. Then $d_{C^0}(f,f'') < \varepsilon$. We can verify  that $g \times Q \in \cal{ML}$ by definition. Thus by Corollary \ref{cor ml is open}, $f'' \in \cal{ML}$. This concludes the proof.
\end{proof}

\section{Appendix}
\begin{proof}[Proof of Proposition \ref{prop-tietze}:]
Let $\zeta : \R_{\geq 0}^2 \to \R_{\geq 0}$ be a continuous function such that: 
 $\zeta(r,r') > 0$ for any $r' \in (0, 2r)$; and $\zeta(r,r') =0$ for any $r' \in [2r, \infty)$.

We first show the following weaker version of Proposition \ref{prop-tietze}.
\begin{lemma}\label{lem-tietze}
Proposition \ref{prop-tietze}  is true assuming in addition that:
\enmt
\item $\overline{int(M)} = M$; and
\item for any $\theta \in X \setminus M$, any $\theta' \in M$ with $d(\theta, \theta')<2d(\theta, M)$, we have $d_{C^0}(G_{\theta'}, F_{\theta}) < c$.
\eenmt
\end{lemma}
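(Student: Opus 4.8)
The plan is to prove the stated special case (Lemma \ref{lem-tietze}) of Tietze's extension theorem for $\widetilde{Homeo}$, which will then be bootstrapped to the full Proposition \ref{prop-tietze}. First I would set up the extension by a partition-of-unity type interpolation in the $\widetilde{Homeo}$ direction. The key point is that $\widetilde{Homeo}$ is a convex-like space: given any finite collection of maps $h_1,\dots,h_m \in \widetilde{Homeo}$ and weights $\lambda_j \geq 0$ with $\sum_j \lambda_j = 1$, the convex combination $\sum_j \lambda_j h_j$ is again an element of $\widetilde{Homeo}$ (it is $1$-periodic modulo the diagonal, orientation preserving, and strictly increasing since each $h_j$ is), and moreover $d_{C^0}(\sum_j \lambda_j h_j, g) \leq \max_j d_{C^0}(h_j, g)$ for any fixed $g \in \widetilde{Homeo}$, because the sup of the pointwise-weighted average of $h_j(x) - g(x)$ is bounded by the max of the sups. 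This is exactly why the function $\zeta$ is introduced: it provides bump weights supported on the region where the perturbation is still controlled by $c$.

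The concrete construction I would carry out: for $\theta \in X \setminus M$, set $r(\theta) = d(\theta, M) > 0$, and average $F_{\theta}$ with the boundary values $G_{\theta'}$ for $\theta' \in M$ near $\theta$, using weights built from $\zeta(r(\theta), d(\theta,\theta'))$. More precisely, because $M$ is compact one can take a finite subcover: cover $M$ by finitely many balls, take a subordinate partition of unity $\{\chi_i\}$ on a neighborhood of $M$ with centers $\theta_i \in M$, and define for $\theta$ outside $M$ but near it
\[
G'_{\theta} = \Big(1 - \sigma(\theta)\Big) F_{\theta} + \sigma(\theta) \sum_i \mu_i(\theta)\, G_{\theta_i},
\]
where $\sigma$ is a cutoff that is $1$ on $M$ and decays to $0$ away from it (controlled so that whenever $\sigma(\theta) > 0$ and $\mu_i(\theta) > 0$ one has $d(\theta,\theta_i) < 2d(\theta,M)$, so hypothesis (2) of the lemma applies and $d_{C^0}(G_{\theta_i}, F_{\theta}) < c$), and $\mu_i(\theta) = \chi_i(\theta)/\sum_j \chi_j(\theta)$ are normalized weights. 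On $M$ itself set $G' = G$; condition (1), $\overline{int(M)} = M$, guarantees this glues continuously with the interpolated part, since any approach to a boundary point of $M$ from outside can be matched by an approach from $int(M)$. Then check: $G'_{\theta} \in \widetilde{Homeo}$ by the convexity remark; $d_{C^0}(F, G') < c$ everywhere by the convex-combination estimate together with hypothesis (2); $G'|_{M\times\R} = G|_{M\times\R}$ by construction; and $G'$ is continuous and a homeomorphism of $X \times \R$ (it is fiberwise a homeomorphism depending continuously on $\theta$, hence a homeomorphism).

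For the passage from Lemma \ref{lem-tietze} to the full Proposition \ref{prop-tietze}, I would reduce the general compact $M$ to one satisfying (1) and (2). One first thickens: replace $M$ by a slightly larger closed set $M'$ obtained as a finite union of small closed balls centered at points of $M$, chosen fine enough that $\overline{int(M')} = M'$ and small enough (by uniform continuity of $F$, $G$ and the strict inequality $d_{C^0}(F|_{M\times\R},G) < c$) that $G$ extends to $M' \times \R$ — e.g. via $G_{\theta} = G_{p(\theta)}$ composed with a correction, or more simply by first applying the lemma's construction locally — while keeping $d_{C^0}(F|_{M'\times\R}, G) < c$ and arranging condition (2). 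Actually the cleanest route is: apply the interpolation construction above directly with $M$ in place of $M'$, but where "near $M$" uses the metric ball structure, and verify that condition (1) is not truly needed if one instead builds the extension on all of $X \setminus M$ continuously up to $M$ using the normalized $\zeta$-weights — the role of (1) is only to avoid pathologies at $\partial M$, which can be circumvented by first replacing $M$ with a regular closed neighborhood. The main obstacle I anticipate is precisely this continuity-at-the-boundary issue: ensuring the interpolated map on $X \setminus M$ extends continuously to agree with $G$ on $\partial M$, and that the resulting fiberwise maps genuinely stay in $\widetilde{Homeo}$ (strict monotonicity, not just monotonicity) uniformly — the convexity observation handles this cleanly, so the real work is bookkeeping the weight functions $\mu_i, \sigma$ and the metric estimates that invoke hypothesis (2).
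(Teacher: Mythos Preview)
Your convexity observation about $\widetilde{Homeo}$ is correct and is exactly what makes an averaging scheme work, but the concrete construction you propose has a genuine gap at $\partial M$. With \emph{fixed} centers $\theta_i\in M$ and a fixed subordinate partition of unity $\{\chi_i\}$, the requirement ``whenever $\sigma(\theta)>0$ and $\mu_i(\theta)>0$ one has $d(\theta,\theta_i)<2d(\theta,M)$'' cannot be arranged: as $\theta\to\theta_0\in\partial M$ from outside, $d(\theta,M)\to 0$ while $d(\theta,\theta_i)$ stays bounded below by $\min_i d(\theta_0,\theta_i)>0$ for all but possibly one $i$. Consequently hypothesis (2) is never invoked near $\partial M$, and worse, your formula gives $G'_\theta\to\sum_i\mu_i(\theta_0)G_{\theta_i}$, which is a fixed convex combination of the $G_{\theta_i}$'s and has no reason to equal $G_{\theta_0}$. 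So the extension fails to be continuous across $\partial M$.

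The fix, and what the paper does, is to make the averaging \emph{scale-adapted}: instead of finitely many fixed centers, average $G_{\theta'}$ over all $\theta'\in M$ against the weight $\zeta(d(\theta,M),d(\theta,\theta'))$ with respect to the full-support invariant measure $\mu$, i.e.
\[
G'_\theta(y)=\frac{\int_M G_{\theta'}(y)\,\zeta(d(\theta,M),d(\theta,\theta'))\,d\mu(\theta')}{\int_M \zeta(d(\theta,M),d(\theta,\theta'))\,d\mu(\theta')}.
\]
The support of the weight is $\{\theta':d(\theta,\theta')<2d(\theta,M)\}$, which shrinks to $\{\theta_0\}$ as $\theta\to\theta_0\in\partial M$, so the average converges to $G_{\theta_0}$ by continuity of $G$ on $M$; and hypothesis (2) now applies to every $\theta'$ in the support, giving $d_{C^0}(G'_\theta,F_\theta)<c$. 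Condition (1), $\overline{int(M)}=M$, is exactly what guarantees the denominator is positive: a closest point $\theta^*\in M$ to $\theta$ can be approximated by $\theta'\in int(M)$ with $d(\theta,\theta')<2d(\theta,M)$, and a neighborhood of $\theta'$ inside $M$ has positive $\mu$-measure. Your $(1-\sigma)F_\theta$ term is unnecessary once the weights are scale-adapted.
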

\begin{proof}
Recall that $\mu$ is an $f-$invariant measure. Since $f$ is minimal, $\mu$ is of full support.
We set $G'(\theta, y) = (g(\theta), G'_{\theta}(y))$ with $G'_{\theta}(y)$ defined as follows. 
For any $\theta \in M$, we set $G'_{\theta} = G_{\theta}$; and for any $\theta \in X \setminus M$, we set
\aryst
G'_{\theta}(y) := (\int_{M} G_{\theta'}(y)\zeta(d(\theta, M), d(\theta, \theta')) d\mu(\theta'))/(\int_M \zeta(d(\theta, M), d(\theta, \theta')) d\mu(\theta')).
\earyst
The above definition makes sense since by (1) and the definition of $\zeta$, for any $\theta \in X \setminus M$, there exists $\theta' \in int(M)$ with $d(\theta, \theta')  < 2 d(\theta, M)$, and as  a consequence $\zeta(d(\theta, M), d(\theta, \theta''))  > 0$ for all $\theta'' \in M$ sufficiently close to $\theta'$. It is direct to verify that $G'_{\theta} \in \widetilde{Homeo}$ for every  $\theta \in X$, and $\theta \mapsto G'_{\theta}$ is continuous over $X$. Thus $G'$ is a homeomorphism. By (2), we have that $d_{C^0}(F,G') < c$. This ends the proof.
\end{proof}

Given an integer $n \geq 2$, we use Tietze's Extension Theorem to define functions $\varphi_{n,i} \in C^0( X,   \R)$,$i=0,\cdots, n-1$, which extend functions $(\theta \mapsto G(\theta, i/n)) \in C^0(M,\R)$,$i=0,\cdots, n-1$ respectively. We set $\varphi_{n,n} = \varphi_{n,0}+1$.  For each $n \geq 2$, we define  $G_n$, a lift of a $g-$forced map, as follows. We set $G_{n}(\theta, y) =(g(\theta), (G_n)_{\theta}(y))$, where for every $ \theta \in X$, every $i\in \{0,\cdots, n-1\}$, every $y \in (i/n, (i+1)/n] $, we set
\aryst
(G_n)_{\theta}(y+k) := ((ny-i)\varphi_{n,i}(\theta) + (i+1-ny)\varphi_{n,i+1}(\theta)) +k, \forall k \in \Z.
\earyst
In another words, $G_n$ is obtained by piecewise affine interpolations using functions $\{\varphi_{n,i}\}_{i=0}^{n}$. Hence $(G_n)_{\theta} \in \widetilde{Homeo}$ for every $\theta \in X$.
Let $\{U_n\}_{n \geq 2}$ be a sequence of open neighbourhoods of $M$ such that $\cap_{n \geq 2} U_n = M$. Moreover, we assume that $U_n \neq X$ and $\overline{U_{n+1}} \subset U_n$ for every $n$.  By Urysohn's lemma, for each $n \geq 2$, there exists $\psi_n \in C^0( X , [0,1])$ such that $\psi_n|_{X \setminus U_n} \equiv 1$ and $\psi_n|_{\overline{U_{n+1}}} \equiv 0$. We set $\psi_1 \equiv 0$.

We define a map $G''$ as follows. For every $\theta \in X$, $y \in \R$, we set $G''(\theta, y) =(g(\theta), G''_{\theta}(y))$ where $G''_{\theta} = G_{\theta}$ if $\theta \in M$; and
\aryst
G''_{\theta}(y) = \frac{\int \sum_{n \geq 1} (\psi_{n+1}-\psi_{n})(\theta') \zeta(d(\theta, M), d(\theta, \theta')) (G_n)_{\theta'}(y) d\mu(\theta')}{\int \sum_{n \geq 1}(\psi_{n+1}-\psi_{n})(\theta') \zeta(d(\theta, M), d(\theta, \theta')) d\mu(\theta')}
\earyst
if $\theta \in X \setminus M$. It is direct to see that $G''_{\theta} \in \widetilde{Homeo}$ for any $\theta \in X$. We claim that: $G''$ is continuous. Indeed, for each $\theta' \in X$, there exist one or two integers $n \geq 1$ such that $(\psi_{n+1}-\psi_n)(\theta') \neq 0$. Thus $G''$ is continuous on $X \setminus M$.
Moreover, for any $n > 0$, there exists $\tau > 0$ such that inequalities $0 < d(\theta, M)< \tau$ implies that any $\theta'$ with  $\zeta(d(\theta, M), d(\theta,\theta')) \neq 0$ satisfies that $\theta' \in B(\theta, 2\tau) \subset B(M, 3\tau) \subset U_{n+1}$; for any such $\theta'$, we have $(\psi_{m+1} - \psi_{m})(\theta') = 0$ for any $m < n$. Then we can  verifies our claim by noting  there exists an integer $n > 0$ such that $d_{C^0}(G_m|_{U_n \times \R}, F|_{U_n \times \R}) < c$ for every $m \geq n$.  Thus $G''$ is a lift of a $g-$forced map whose restriction to $M \times \R$ equals $G$.

Finally, we let $U' \subset U$ be two small open neighborhoods of $M$ with $\overline{U'} \subset U$. We  set $C= \overline{U'} \cup (X \setminus U)$. We  define $G''' : C \times \R \to g(C) \times \R$ by $G'''|_{\overline{U'} \times \R} = G''|_{\overline{U'} \times \R}$ and $G'''|_{(X \setminus U) \times \R} = F |_{(X \setminus U) \times \R}$. It is clear that $\overline{int(C)} = C$. By our hypothesis that $d_{C^0}(G|_{M \times \R}, F|_{M \times \R}) < c$, and by letting $U$ be sufficiently small, we can ensure that condition Lemma \ref{lem-tietze}(2) holds for $(G''',F, C)$ in place of $(G,F,M)$.
We obtain $G'$ as the extension given by  Lemma \ref{lem-tietze} for $(G''', F, C)$ in place of $(G,F,M)$.
\end{proof}


\begin{thebibliography}{aaaa}

\bibitem{A} A. Avila,
\newblock 
Almost reducibility and absolute continuity I,  
\newblock {arXiv.} 







\bibitem{AB} A. Avila, J. Bochi,
\newblock 
A uniform dichotomy for generic $\mathrm{SL}(2,\R)$ cocycles over a minimal base,
\newblock {Bulletin de la Soci\'et\'e Mathématique de France }{\bf 135} (2007), 407-417.



\bibitem{ABD} A. Avila, J. Bochi, D. Damanik, 
\newblock 
Cantor Spectrum for Schrödinger Operators with Potentials arising from Generalized Skew-shifts,
\newblock {Duke Mathematical Journal} {\bf 146} (2009), 253-280.



\bibitem{ABD2}  A. Avila, J. Bochi, D. Damanik,
\newblock 
Opening gaps in the spectrum of strictly ergodic Schrödinger operators, 
\newblock {Journal of the European Mathematical Society} {\bf 14} (2012), 61-106.


\bibitem{AFK}
A. Avila, B. Fayad, R. Krikorian, 
\newblock{A KAM scheme for $SL(2, \R)$ cocycles with Liouvillean frequencies},
\newblock{Geom. Func. Anal.} {\bf 21} (2011), 1001-1019.

\bibitem{AK}
A. Avila, R. Krikorian, 
\newblock{Reducibility or nonuniform hyperbolicity for quasiperiodic Schr\"{o}dinger cocycles},
\newblock{Ann. Math.} {\bf 164} (2006), 911-940.

\bibitem{BC}  
M. Benedicks, L. Carleson,
\newblock 
The dynamics of the H\'enon map,
\newblock {Ann. Math.} (2) {\bf 133}(1) (1991), 73-169.

\bibitem{BJ}  K. Bjerkl\"ov, T. J\"ager,
\newblock 
Rotation numbers for quasiperiodically forced circle maps -- Mode-locking vs strict monotonicity,
\newblock {J. Am. Math. Soc.}, {\bf 22}(2) (2009), 353-362.



\bibitem{B} J. Bochi,
\newblock 
Genericity of zero Lyapunov exponents,
\newblock {Ergodic Theory and Dynamical Systems }{\bf 22} (2002), 1667-1696.






\bibitem{CS}  
V. A. Chulaevsky, Ya. G. Sinai,
\newblock 
Anderson localization for the 1-D discrete Schr\"odinger operator with
two-frequency potential,
\newblock {Comm. Math. Phys. }{\bf 125}(1)  (1989), 91-112.



\bibitem{D}  
D. Damanik,
\newblock 
Lyapunov exponents and spectral analysis of ergodic Schrödinger operators: A survey of Kotani theory and its applications,
\newblock {Spectral theory and mathematical physics: a Festschrift in honor of Barry Simon's 60th birthday, 539-563, Proc. Sympos. Pure Math., 76, Part 2, Amer. Math. Soc., Providence, RI, 2007}.



\bibitem{DGO}  M. Ding, C. Grebogi, E. Ott,
\newblock 
Evolution of attractors in quasiperiodically forced systems: From quasiperiodic to strange nonchaotic to chaotic,
\newblock {Physical Review A }{\bf 39}(5) (1989), 2593-2598.


\bibitem{E} H. Eliasson,
\newblock
  Floquet solutions for the $1-$dimensional quasi-periodic Schr\"odinger equation,
  \newblock{Commun. Math. Phys.} {\bf 146} (1992), 447--482.



\bibitem{GSV}  
M. Goldstein, W. Schlag, M. Voda,
\newblock 
On the Spectrum of Multi-Frequency Quasiperiodic Schrödinger Operators with Large Coupling,
\newblock {arXiv.}



\bibitem{H}  
M. Herman,
\newblock 
Une m\'ethode pour minorer les exposants de Lyapunov et quelques exemples montrant le caract\`ere local d'un th\'eor\`eme d'Arnold et de Moser sur le tore de dimension 2,
\newblock {Commentarii Mathematici Helvetici} {\bf 58} (1983), 453-502.


\bibitem{J}  
T. J\"ager,
\newblock 
Linearisation of conservative toral homeomorphisms,
\newblock {Invent. Math. }{\bf 176}(3) (2009),  601-616.

\bibitem{JS}  
 T. J\"ager, J. Stark,
\newblock 
Towards a classification of quasiperiodically forced circle homeomorphisms,
\newblock {Journal of the London Mathematical Society }{\bf 73}(3) (2005), 727-744.




\bibitem{JW}  
T. J\"ager,  J. Wang,
\newblock 
Abundance of mode-locking for quasiperiodically forced circle maps,
\newblock {Communications in Mathematical Physics.} {\bf 353}(1) (2017), 1-36.

\bibitem{Jit}  
S. Jitomirskaya,
\newblock 
Ergodic Schr\"odinger operators (on one foot),
\newblock {F. Gesztesy, P. Deift, C. Galvez, P. Perry, W. Schlag (Editors): Spectral Theory and Mathematical Physics: A Festschrift in Honor of Barry Simon's 60th Birthday, 613-647 (2007).}



\bibitem{KWYZ}  
R. Krikorian, J. Wang, J.You, Q. Zhou,
\newblock 
Linearization of quasiperiodically forced circle flow beyond Brjuno condition,
\newblock {arXiv.}



\bibitem{WZJ}  
J. Wang, Q. Zhou, T. J\"ager,
\newblock 
Genericity of mode-locking for quasiperiodically forced circle maps,
\newblock {arXiv.}


\bibitem{Y}  
L.-S.  Young,
\newblock 
Lyapunov exponents for some quasi-periodic cocycles.
\newblock {Ergodic Theory and Dynamical Systems} {\bf17} (1997), 483-504.



\end{thebibliography}
\end{document}